\newtheorem{Proposition}{Proposition}[section]
\newtheorem{Lemma}[Proposition]{Lemma}
\newtheorem{Theorem}[Proposition]{Theorem}
\newtheorem{Corollary}[Proposition]{Corollary}
\theoremstyle{definition}
\newtheorem{Remark}[Proposition]{Remark}
\newtheorem{Example}[Proposition]{Example}
\newtheorem{Conjecture}[Proposition]{Conjecture}
\newbox\squ  
\def\0{{\bar{0}}}
\def\1{{\bar{1}}}
\def\lag{{\mathfrak{l}}}
\def\ch{{\operatorname{ch}}}
\def\ad{{\operatorname{ad}\,}}
\def\coad{{\operatorname{ad}^*}}
\def\ne{{\operatorname{ne}}}
\def\F{\operatorname{F}}
\def\Pr{\operatorname{Pr}}
\def\pr{\operatorname{pr}}
\def\res{\operatorname{res}}
\def\ann{\operatorname{Ann}}
\def\VA{\mathcal{V\!A}}
\def\op{\operatorname{op}}
\def\gr{\operatorname{gr}}
\def\tr{\operatorname{tr}}
\def\id{\operatorname{id}}
\def\RRow{\operatorname{Row}}
\def\C{{\mathbb C}}
\def\Z{{\mathbb Z}}
\def\im{{\operatorname{im}}}
\def\Tab{{\operatorname{Tab}}}
\def\la{{\lambda}}
\def\La{{\Lambda}}
\def\si{{\sigma}}
\def\al{{\alpha}}
\def\be{{\beta}}
\def\ga{{\gamma}}
\def\eps{{\varepsilon}}
\def\lan{\langle}
\def\ran{\rangle}
\def\CC{\operatorname{col}}
\def\RR{\operatorname{row}}
\def\N{\mathrm{N}}
\def\NE{\mathrm{NE}}
\def\E{\mathrm{E}}
\def\SS{\mathrm{S}}
\def\NW{\mathrm{NW}}
\def\a{\mathfrak a}
\def\b{\mathfrak b}
\def\g{\mathfrak g}
\def\h{\mathfrak h}
\def\k{\mathfrak k}
\def\m{\mathfrak m}
\def\n{\mathfrak n}
\def\p{\mathfrak p}
\def\q{\mathfrak q}
\def\t{\mathfrak t}
\def\u{\mathfrak u}
\def\gl{\mathfrak{gl}}
\def\aa{\mathbf a}
\def\bb{\mathbf b}
\def\cc{\mathbf c}
\def\OO{\mathcal O}
\def\D{\mathcal D}
\newdimen\Hoogte    \Hoogte=12pt    
\newdimen\Breedte   \Breedte=12pt   
\newdimen\Dikte     \Dikte=0.5pt    
\newenvironment{Young}{\begingroup
       \def\vr{\vrule height0.8\Hoogte width\Dikte depth 0.2\Hoogte}
       \def\fbox##1{\vbox{\offinterlineskip
                    \hrule height\Dikte
                    \hbox to \Breedte{\vr\hfill##1\hfill\vr}
                    \hrule height\Dikte}}
       \vbox\bgroup \offinterlineskip \tabskip=-\Dikte \lineskip=-\Dikte
            \halign\bgroup &\fbox{##\unskip}\unskip  \crcr }
       {\egroup\egroup\endgroup}
\def\Diagram#1{\relax\ifmmode\vcenter{\,\begin{Young}#1\end{Young}\,}\else%
              $\vcenter{\,\begin{Young}#1\end{Young}\,}$\fi}
\title[Finite $W$-algebras]{\boldmath Highest weight theory for finite $W$-algebras}
\author[J. Brundan, S. M. Goodwin and A. Kleshchev]
{Jonathan Brundan, Simon M. Goodwin and Alexander Kleshchev}
\address{Department of Mathematics, University of Oregon, Eugene, OR 97403, USA}
\email{brundan@uoregon.edu, klesh@uoregon.edu}
\address{School of Mathematics, University of Birmingham, Birmingham, B15 3LX,~UK}
\email{goodwin@for.mat.bham.ac.uk}
\thanks{2000 {\it Mathematics Subject Classification}:  17B10, 81R05.
\\
\indent The first and third authors supported by the NSF grant DMS-0654147.}
\begin{document}

\begin{abstract}
We define analogues of Verma modules 
for finite $W$-algebras. By the usual ideas of
highest weight theory, this is 
a first step towards the classification
of finite dimensional irreducible modules.
We also introduce an analogue of the BGG category $\mathcal O$.
Motivated by known results in type A, 
we then formulate some precise conjectures in
the case of nilpotent orbits of standard Levi type.
\end{abstract}

\maketitle
\vspace{-2mm}
\section{Introduction}

There has been a great deal of recent interest in $W$-algebras
and their representation theory.  To each nilpotent element $e$ in 
the Lie algebra $\g$ of a complex reductive algebraic group $G$, one can
associate a finite $W$-algebra $U(\g,e)$. Up to isomorphism, this algebra depends only on the adjoint orbit $G\cdot e$ of $e$ and can be viewed informally as the
``universal enveloping algebra'' 
of the Slodowy slice to this orbit. 
Finite $W$-algebras were
introduced into the mathematical literature by Premet \cite[$\S$4]{P1};
see also \cite{GG}.  For nilpotent orbits admitting even good gradings in the
sense of \cite{EK}, these algebras already appeared  in 
work of Kostant and Lynch
\cite{Kostant, Lynch} in the context of
(generalized) Whittaker modules.
At one extreme,
$U(\g,0)=U(\g)$; at the other
extreme, when $e$ is  regular,
$U(\g,e)$ is isomorphic to the center $Z(\g)$ of $U(\g)$.

There is much motivation for studying the representation theory of finite 
$W$-algebras.  For instance, through Skryabin's equivalence
\cite{Skryabin}, there is a relationship between the
representation theory of $U(\g,e)$ and the representation theory of
$\g$. This provides an important connection between the primitive ideals
of $U(\g)$ whose associated variety contains $G \cdot e$
and the primitive ideals of $U(\g,e)$; see \cite[Theorem 3.1]{P2} and 
\cite[Theorem 1.2.2]{Losev}. 
In another direction, it is shown by
Premet \cite[$\S$6]{P1} 
that $U(\g,e)$ gives rise to a natural 
non-commutative
deformation of the singular variety that arises 
by intersecting the Slodowy slice 
to the orbit $G \cdot e$ 
with the nilpotent cone of $\g$. 

In mathematical physics,
finite $W$-algebras and their affine counterparts have attracted a
lot of attention under a slightly different
guise; see for example \cite{deBoerTjin,deVosvanDriel,Arakawa,DK}.  It has recently 
been proved in \cite{DDDHK} that
the definition in the mathematical physics literature via BRST cohomology agrees
with Premet's definition; 
see the discussion in $\S$\ref{sBRST}.

For the case $G = \mathrm{GL}_N(\C)$, the first and third authors made a
thorough study of the finite dimensional representation theory of 
$U(\g,e)$ in \cite{BK}.  In this case 
$U(\g,e)$ is isomorphic to a quotient of a 
{shifted Yangian} \cite{BKAdv}. 
Using this connection
and the natural 
triangular decomposition of shifted Yangians, we developed
a highest weight theory for $U(\g,e)$, 
leading to the classification of finite dimensional irreducible
$U(\g,e)$-modules; see \cite[$\S$7.2]{BK}.
On the other hand, 
using Premet's definition of $U(\g,e)$ and the so-called
{Whittaker functor}, we obtained character formulae
for the finite dimensional irreducibles as a  consequence of 
the Kazhdan--Lusztig conjecture for a 
certain parabolic category $\mathcal O$ attached to $\g$;
see \cite[$\S$8.5]{BK}.
This type A theory has already had several other quite striking applications;
see \cite{B3,B2,BKSchur}.


In the general case there is little concrete knowledge about 
representations of $U(\g,e)$.  It has only recently been
proved that $U(\g,e)$ always has ``enough'' finite dimensional
irreducible representations; 
see \cite[Theorem 1.1]{P3} and \cite[Theorem 1.2.3]{Losev}. 
The purpose of this paper is to set
up the framework to study representation theory of
$U(\g,e)$ via highest weight theory.  In particular, we define
Verma modules for $U(\g,e)$, which turns out to be surprisingly 
non-trivial.

Recall in classical Lie theory that 
Verma modules are ``parabolically
induced'' from irreducible representations of a Cartan
subalgebra.
The main problem for finite $W$-algebras is to find a suitable
algebra to play the role of Cartan subalgebra. It turns out that 
this role is
played by the ``smaller''
finite $W$-algebra $U(\g_0,e)$ where $\g_0$ is a minimal Levi 
subalgebra of $\g$ containing $e$, i.e. $e$ is a distinguished nilpotent
element of $\g_0$;
see $\S$\ref{sCartan}. 
Given a parametrization
$$
\{V_\La \:|\:\La \in \mathcal L\}
$$
of a complete set of pairwise inequivalent finite dimensional
irreducible $U(\g_0,e)$-modules, we will construct the Verma modules
$$
\{M(\La,e)\:|\:\La \in \mathcal L\}$$ 
for $U(\g,e)$
by parabolically inducing the $V_\La$'s from $U(\g_0,e)$ to $U(\g,e)$; 
see $\S$\ref{sVerma}.
We then prove as usual that the Verma
module $M(\La,e)$ has a unique irreducible quotient $L(\La,e)$ and that the $L(\La,e)$'s parametrized by the subset
$$
\mathcal L^+ := \{\La \in \mathcal L\:|\:\dim L(\La,e) < \infty\}
$$
give a complete set of pairwise inequivalent finite dimensional
irreducible $U(\g,e)$-modules.
Incidentally, all our Verma modules 
belong to a natural category $\mathcal O(e)$ 
 whose objects
have composition series with
only the $L(\La,e)$'s as composition factors; see $\S$\ref{sO}.
In the
case $e = 0$ this category $\mathcal O(e)$
is the usual Bernstein-Gelfand-Gelfand 
category $\mathcal O$ from \cite{BGG}.

The general principles just described reduce the problem of classifying the 
finite dimensional irreducible $U(\g,e)$-modules to 
two major problems: 
\begin{itemize}
\item[(1)] 
Find a natural parametrization of the 
finite dimensional irreducible $U(\g_0,e)$-modules by some explicit
labelling set $\mathcal L$.
\item[(2)] 
Describe the subset $\mathcal L^+$ of $\mathcal L$ combinatorially.
\end{itemize}
We remark that the definition of Verma module, hence the 
subset $\mathcal L^+$ of $\mathcal L$,
depends essentially on a choice
of positive roots in the restricted root system of $U(\g,e)$ in the
sense of \cite[$\S$2]{BG}. Unlike in the classical situation there is
often more that one conjugacy classes of such choices. The combinatorial description of the subset
$\mathcal L^+$ will certainly depend in a significant way on this choice.

In the special case that $e$ is of {\em standard Levi type}, i.e.
$e$ is actually a regular nilpotent element of $\g_0$, 
Kostant showed in \cite[$\S$2]{Kostant}
that $U(\g_0,e)$ is canonically isomorphic to $Z(\g_0)$.
Hence in this case
the solution
to problem (1) is very simple:
the set $\mathcal L$ labelling our Verma modules
can be naturally identified
with the set $\t^* / W_0$ of 
$W_0$-orbits in $\t^*$,
where $W_0$ is the Weyl group of $\g_0$ 
with respect to a maximal toral subalgebra $\t$. This resembles 
a result of Friedlander and Parshall 
\cite[Corollary 3.5]{FP} giving a similar labelling of irreducible
representations
for reduced enveloping algebras of standard Levi type in characteristic $p$.
In $\S$\ref{sLevi} we 
 formulate two explicit conjectures
concerning the standard Levi type case. The first of these conjectures
reduces the solution of problem (2) to the combinatorics of 
associated varieties of primitive ideals in $\g$.
We prove this conjecture in type A (for a standard choice of positive roots)
in $\S$\ref{sA}, by translating the results from \cite{BK} into the
general framework.
Our second conjecture is quite a bit stronger, and 
was inspired by Premet's ideas in \cite[$\S$7]{P2}.
It predicts
an explicit link between our category $\mathcal O(e)$ and another
category $\mathcal O(\chi)$ introduced by 
Mili\v c\'ic and Soergel \cite{MS}. This conjecture
also implies the truth of the
Kazhdan--Lusztig conjecture for finite $W$-algebras of standard Levi type
from \cite{deVosvanDriel}.
We speculate for $e$ of standard Levi type 
that every primitive ideal of $U(\g,e)$ is the annihilator
of an irreducible highest weight module in $\mathcal O(e)$, though we have 
no evidence for this 
beyond Duflo's theorem in the case $e=0$.

The rest of the article is organized as follows.
In Section 2, we explain in detail the
relationship between three quite different definitions of
finite $W$-algebra. The key to the
new results in this paper actually comes from the third of these definitions, namely, the BRST cohomology definition
as formulated in \cite{DK}.
We point out especially
Theorem~\ref{tidish} which makes the link between the 
second and third definitions quite transparent.
In Section 3 we survey various results of Premet describing the
associated graded algebra to $U(\g,e)$ in its two natural filtrations, setting
up more essential notation along the way. 
The main new results of the paper are proved in Section 4, the most important
being Theorem~\ref{csa}.
Finally in Section 5
we discuss standard Levi type and explain how
to translate the type A results from \cite{BK}.

We work throughout over the ground field $\C$.
By a {\em character} of a Lie algebra $\g$ we mean a Lie algebra
homomorphism $\rho:\g \rightarrow \C$.
Any such $\rho$ 
induces a shift automorphism
$$
S_\rho:U(\g) \rightarrow U(\g)$$ 
of the universal enveloping algebra $U(\g)$ with 
$S_\rho(x) := x + \rho(x)$ for each $x \in \g$.

\vspace{2mm}

\noindent
{\em Acknowledgements.}
The first author would like to thank Alexander Premet for some valuable 
discussions about finite $W$-algebras at 
the Oberwolfach meeting on enveloping algebras in March 2005.
The second author thanks the
EPSRC for the travel grant EP/F004273/1.

\section{Three definitions of finite $W$-algebras}

In this section we give three equivalent definitions of
the finite $W$-algebra $U(\g,e)$. The first two of
these
definitions have left- 
and right-handed versions which are not obviously isomorphic;
we establish that they are indeed isomorphic using the third definition.
Although not used here, we 
point out that there is also now a 
{\em fourth} important definition of the finite $W$-algebra, 
namely,
Losev's
definition via Fedosov quantization; see \cite[$\S$3]{Losev}.

\subsection{Definition via Whittaker models}\label{swhittaker}
Let $\g$ be the Lie algebra
of a connected reductive algebraic group $G$
over $\C$. Let
$e \in \g$ be a nilpotent element. By the Jacobson--Morozov theorem,
we can find $h, f \in \g$ so that $(e,h,f)$ is an
$\mathfrak{sl}_2$-triple in $\g$, i.e.\ $[h,e] = 2e, [h,f] = -2f$
and $[e,f] = h$. We write $\g^e$, $\g^f$ and $\g^h$ for the
centralizers of $e$, $f$ and $h$ in $\g$, respectively. Then $\g^h
\cap \g^e$ is a Levi factor of $\g^e$.  Pick a maximal toral
subalgebra $\t^e$ of this Levi factor, and a
maximal toral subalgebra $\t$ of $\g$ containing $\t^e$ and $h$.
So $\t^e$ is the centralizer of $e$ in $\t$.
Assume in addition that we are given a {\em good grading}
$$
\g = \bigoplus_{j \in \Z} \g(j)
$$
for $e$ that is compatible with $\t$, i.e. $e \in \g(2)$,
$\g^e \subseteq \bigoplus_{j \geq 0} \g(j)$ and $\t \subseteq \g(0)$. 
Good gradings for $e$ are classified in \cite{EK}; see
also \cite{BG}. As $h \in \t$ we have 
$h \in \g(0)$ and, by \cite[Lemma
19]{BG}, it is automatically the case that $f \in \g(-2)$. 
Any element  $x \in \g$ decomposes as $x = \sum_{j \in \Z} x(j)$
with $x(j) \in \g(j)$; we let $x(< 0) := \sum_{j < 0} x(j)$ and
$x(\geq 0) := \sum_{j \geq 0} x(j)$. From now on, we abbreviate 
$$
\p := \bigoplus_{j \geq 0} \g(j), \quad\m := \bigoplus_{j \leq -2}
\g(j), \quad\n := \bigoplus_{j < 0} \g(j), \quad\h := \g(0), \quad\k
:= \g(-1).
$$
In particular, $\p$ is a parabolic subalgebra of $\g$ with Levi
factor $\h$ and $\n$ is the nilradical of the opposite parabolic.
If the subspace $\k$ is non-zero then it is {not} a subalgebra of $\g$.
If it is zero then the good grading is necessarily
even, i.e.\ $\g(j) = \{0\}$ for all odd $j$.

Let $(.|.)$ be a 
non-degenerate symmetric invariant bilinear
form on $\g$, inducing non-degenerate forms on $\t$ and $\t^*$ in
the usual way.
Define a linear map
$$
\chi:\g \rightarrow \C, \qquad x \mapsto (e|x).
$$
Also let $\lan.|.\ran$ be the
non-degenerate symplectic form on $\k$ defined by
$$
\lan x|y \ran := \chi([y,x]).
$$
Note that $\chi$ restricts to a character of $\m$.
Let $I$ (resp.\ $\overline{I}$) be the left (resp.\ right) ideal of $U(\g)$ generated by the elements 
$
\{x - \chi(x)\:|\:x \in \m\}$. Set
$$
Q := U(\g) / I\qquad
\text{(resp.\ }\overline{Q} := U(\g) / \overline{I}\text{),}
$$
which is a left (resp.\ right) $U(\g)$-module by the regular action.
The adjoint action of $\n$ on $U(\g)$ leaves the subspace $I$ (resp.\ $\overline{I}$) invariant, so induces a 
well-defined adjoint action of
$\n$ on $Q$ (resp.\ $\overline{Q}$) such that
$$
[x, u+I] := [x,u] + I\qquad\text{(resp.\ }
[x, u+\overline{I}] := [x,u]+\overline{I}\text{)}
$$
for $x \in \n,u \in U(\g)$.
Let $Q^{\n}$ (resp.\ $\overline{Q}^{\n}$) be the corresponding invariant subspace. Then
\begin{align*}
(x-\chi(x)) (u+I) = [x,u +I]\qquad\text{(resp.\ }
(u+\overline{I}) (x-\chi(x)) =
-[x,u+\overline{I}]\text{)}
\end{align*}
for all $x
\in \m,u \in U(\g)$. 
This is all that is needed to check that the multiplication on
$U(\g)$ induces a well-defined multiplication on $Q^{\n}$ 
(resp.\ $\overline{Q}^{\n}$):
$$
(u + I)(v+I) := uv + I\qquad
\text{(resp.\ }
(u+\overline{I})(v+\overline{I}) := uv + \overline{I}\text{)}
$$
for $u+I, v +I \in Q^{\n}$ 
(resp.\ $u+\overline{I},  v+\overline{I} \in
\overline{Q}^{\n}$). 
We refer to $Q^{\n}$ as the {\em Whittaker model
realization} of the {finite $W$-algebra} associated to $e$ and the
chosen good grading. 
Up to isomorphism,
the algebra $Q^\n$ is known to be independent
of the choice of good grading; see \cite[Theorem 1]{BG} or \cite[Corollary 3.3.3]{Losev}.
Later in the section, we will
construct a canonical isomorphism between $Q^{\n}$ and the
right-handed analogue $\overline{Q}^{\n}$; the existence of
such an isomorphism is far from clear at this point.

\begin{Remark}\label{premets}\rm
The definition of $Q^\n$ just explained 
is not quite the same as Premet's
definition of the finite $W$-algebra from \cite{P1}.
To explain the connection, we need to 
fix in addition a Lagrangian subspace $\lag$ of $\k$
with respect to the form $\lan.|.\ran$.
Note that $\chi$ still restricts to a character of $\m\oplus\lag$ (though
it need not restrict to a character of $\n$).
Let $I_\lag \supseteq I$
denote the left ideal of $U(\g)$ generated by
$\{x-\chi(x)\:|\:x \in \m\oplus\lag\}$.
Set $Q_\lag := U(\g) / I_\lag$ and
\begin{align*}
Q_\lag^{\m\oplus\lag} :&= \{u+I_\lag \in Q_\lag\:|\:[x,u] \in I_\lag\text{ for all }x \in \m\oplus\lag\}\\
 &= \{u+I_\lag \in Q_\lag\:|\:(x-\chi(x))u  \in I_\lag\text{ for all }x \in \m\oplus\lag\}.
\end{align*}
Again this inherits a well-defined algebra structure from 
the multiplication in $U(\g)$; it is even the case that
$$
Q_\lag^{\m\oplus\lag} \cong \operatorname{End}_{U(\g)}(Q_\lag)^{\op}.
$$
The algebra $Q_\lag^{\m\oplus\lag}$ is exactly Premet's definition 
of the finite $W$-algebra from \cite{P1}.
By \cite[Theorem 4.1]{GG} the canonical quotient map
$Q \twoheadrightarrow Q_\lag$ 
restricts to an algebra isomorphism 
$$
\nu: Q^\n \stackrel{\sim}{\rightarrow} Q_\lag^{\m\oplus\lag}.
$$
Hence our Whittaker model realization
is equivalent to Premet's.
\end{Remark}

\subsection{Definition via non-linear Lie algebras}\label{snonlinear}
The next definition of the finite $W$-algebra is based on \cite[$\S$2.4]{P2},
and is the main definition that we will use in the subsequent sections.
To formulate it, we will use an easy special case of the 
notion of a 
non-linear Lie superalgebra
from \cite[Definition 3.1]{DK}.
For the remainder of this article, a {\em non-linear Lie superalgebra}
means a vector superspace
$\a = \a_{\0} \oplus \a_{\1}$
equipped with a {non-linear Lie bracket} $[.,.]$, that is,
 a parity preserving linear map
$\a \otimes \a \rightarrow T(\a)$
satisfying the following conditions for all homogeneous $a,b,c \in \a$:
\begin{enumerate}
\item[(1)]
$[a,b] \in \C \oplus \a$;
\item[(2)] $[a,b] = (-1)^{p(a) p(b)} [b,a]$ (where $p(a) \in \Z_2$ 
denotes parity);
\item[(3)] $[a,[b,c]] = [[a,b],c]+(-1)^{p(a) p(b)} [b,[a,c]]$ 
(interpreted using the convention that any bracket with a scalar is zero).
\end{enumerate}
This definition agrees with the general notion 
of non-linear Lie superalgebra from \cite[Definition 3.1]{DK} when the
grading on $\a$ in the general setup is concentrated in degree $1$.

The {\em universal enveloping superalgebra} of a non-linear Lie superalgebra
$\a$ is 
$U(\a) := T(\a)/M(\a)$ where $M(\a)$ is the two-sided ideal generated
by the elements $a \otimes b - (-1)^{p(a) p(b)} b \otimes a - [a,b]$
for all homogeneous
$a,b \in \a$.
By a special case of 
\cite[Theorem 3.3]{DK}, $U(\a)$ is {\em PBW generated} by $\a$ in the sense 
that if
$\{x_i\:|\:i \in I\}$ is any homogeneous ordered basis of $\a$ 
then the ordered monomials
$$
\{x_{i_1} \cdots x_{i_s}\:|\:s \geq 0, i_1 \leq \cdots \leq i_s
\text{ and }i_t < i_{t+1} \text{ if }p(x_{i_t}) = \1\}
$$
give a
basis for $U(\a)$.
By a {\em subalgebra} of a non-linear Lie superalgebra $\a$
we mean a graded subspace $\b$ of $\a$ such that 
$[\b,\b] \subseteq \C \oplus \b$.
In that case $\b$ is itself a non-linear Lie superalgebra
and $U(\b)$ 
is identified with the subalgebra
of $U(\a)$ generated by $\b$.
We call $\a$ a {\em non-linear Lie algebra}
if it is purely even.

Now return to the setup of $\S$\ref{swhittaker}.
Following the language of \cite[\S
5]{DK} and \cite{DDDHK}, let
$$
\k^{\ne} = \{x^{\ne}\:|\:x \in \k\}
$$
be a ``neutral'' copy of $\k$. We allow ourselves to write $x^{\ne}$
for any element $x \in \g$, meaning $x(-1)^{\ne}$. Make
$\k^{\ne}$ into a non-linear Lie algebra with non-linear
Lie bracket defined by
$$
[x^\ne,y^\ne] := \lan x | y \ran
$$
for $x, y \in \k$,
recalling that $\lan x| y \ran = (e|[y,x])$.
Then $U(\k^\ne)$
is the Weyl algebra associated to 
$\k$ and the symplectic form 
$\lan.|.\ran$. Let
$$
\widetilde{\g} := \g \oplus \k^{\ne}
$$
viewed as a non-linear Lie algebra with bracket obtained by
extending the brackets already defined on $\g$ and $\k^\ne$ to all
of $\widetilde{\g}$ by declaring $[x, y^{\ne}] := 0$ for $x \in
\g, y \in \k$. Then
$U(\widetilde{\g})=
U(\g) \otimes U(\k^{\ne})$. Also
introduce the subalgebra
$$
\widetilde{\p} := \p \oplus
 \k^{\ne}
$$
of $\widetilde{\g}$, whose universal enveloping algebra is
identified with $U(\p) \otimes U(\k^{\ne})$.
For use in $\S$\ref{sKazhdan},
we record the following crucial lemma which
is proved as in \cite[(2.2)]{GG}.

\begin{Lemma}\label{wpd}
$\displaystyle\widetilde{\p} = \g^e \oplus \bigoplus_{j \geq 2} [f,
\g(j)] \oplus \k^\ne$.
\end{Lemma}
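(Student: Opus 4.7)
The plan is to strip off $\k^\ne$ from both sides and reduce the lemma to the purely Lie-algebraic claim that
$$
\p = \g^e \oplus \bigoplus_{j \geq 2} [f, \g(j)]
$$
as a direct sum of subspaces of $\p = \bigoplus_{j \geq 0} \g(j)$. This reformulation makes sense because $\ad f$ shifts the grading by $-2$, so $[f, \g(j)] \subseteq \g(j-2) \subseteq \p$ whenever $j \geq 2$, and $\g^e \subseteq \p$ is part of the definition of a good grading.

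The main input is standard $\mathfrak{sl}_2$-representation theory applied to the adjoint action of the triple $(e,h,f)$ on the finite-dimensional space $\g$. I would decompose $\g$ into irreducible $\mathfrak{sl}_2$-submodules and use the fact that in an $(m+1)$-dimensional irreducible component $V$ with the usual basis $v_0, v_1, \ldots, v_m$ where $v_k = (\ad f)^k v_0 / k!$, one has $\ker(\ad e|_V) = \C v_0$ and $\im(\ad f|_V) = \spa(v_1, \ldots, v_m)$. Summing these complementary subspaces over all irreducible components yields the global vector space decomposition
$$
\g = \g^e \oplus [f, \g].
$$

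The final step is to intersect this decomposition with $\p$. Since $\g^e \subseteq \p$ we have $\g^e \cap \p = \g^e$. On the other hand, because $\ad f$ is graded of degree $-2$ the subspace $[f, \g] = \bigoplus_k [f, \g(k)]$ is graded, so
$$
[f, \g] \cap \p = \bigoplus_{k \geq 0} [f, \g(k+2)] = \bigoplus_{j \geq 2} [f, \g(j)].
$$
Combining the two intersections gives the reduced identity for $\p$, and adding $\k^\ne$ to each side recovers the statement of the lemma. I do not expect any real obstacle: the only nontrivial ingredient is the $\mathfrak{sl}_2$-theoretic decomposition $\g = \g^e \oplus [f,\g]$, which is textbook material and which, as the authors note, already appears in essentially this form in \cite[(2.2)]{GG}.
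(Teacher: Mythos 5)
Your proof is correct and is essentially the argument the paper intends: the paper gives no proof of its own but defers to \cite[(2.2)]{GG}, which is precisely the $\mathfrak{sl}_2$-theoretic decomposition $\g = \g^e \oplus [f,\g]$ restricted to the non-negatively graded part (your reduction works for an arbitrary good grading since it only uses $f \in \g(-2)$ and $\g^e \subseteq \p$). Nothing further is needed.
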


Extend the left (resp.\ right) regular action of $\g$ on $Q$ (resp.\ $\overline{Q}$)
to an action of $\widetilde{\g}$ by setting
$$
x^\ne (u +I) := u x +I\qquad\text{(resp.\ }
(u+\overline{I})x^\ne := xu+\overline{I}\text{)}
$$
for $u \in U(\g)$ and $x \in \k$. This makes $Q$ into a left
$U(\widetilde{\g})$-module (resp.\ $\overline{Q}$ into a right
$U(\widetilde{\g})$-module). For $x,y \in \n$, we have that 
$$
[x-\chi(x) - x^\ne, y - \chi(y) - y^\ne] = [x,y] - \chi([x,y]) -
[x,y]^\ne,
$$
because $[x,y]^\ne = 0$.
Hence the map $\n \rightarrow
U(\widetilde{\g}), \:x \mapsto x - \chi(x) - x^\ne$ is a Lie algebra
homomorphism. So we can make $U(\widetilde{\g})$ into
an $\n$-module via the {\em twisted adjoint action} defined by
letting $x \in \n$ act as the derivation $u \mapsto [x-\chi(x) -
x^\ne,u]$. Since $\chi(x)$ is a scalar, this map can be written more
succinctly as $u \mapsto [x-x^\ne, u]$.

\begin{Lemma}\label{nat}
The natural multiplication map
$$
U(\widetilde{\g}) \twoheadrightarrow Q, \ u \mapsto u(1 + I)
\qquad\text{(resp.\ }
U(\widetilde{\g}) \twoheadrightarrow \overline{Q}, \  u \mapsto
(1+\overline{I})u\text{)}
$$
intertwines the twisted adjoint action of $\n$ on $U(\widetilde{\g})$
with the adjoint action of $\n$ on $Q$ (resp.\ $\overline{Q}$).
\end{Lemma}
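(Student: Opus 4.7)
The plan is to exploit the fact that $\pi_L: u \mapsto u(1+I)$ is a left $U(\widetilde{\g})$-module homomorphism by construction, and dually $\pi_R: u \mapsto (1+\overline{I})u$ is a right $U(\widetilde{\g})$-module homomorphism. Thus for any $a, u \in U(\widetilde{\g})$ one has
\[
\pi_L([a,u]) = a \cdot \pi_L(u) - u \cdot \pi_L(a), \qquad \pi_R([a,u]) = \pi_R(a) \cdot u - \pi_R(u) \cdot a.
\]
I will apply this with $a := x - \chi(x) - x^{\ne}$, which represents the twisted adjoint action, since $\chi(x) \in \C$ is central and hence $[a,u] = [x - x^{\ne}, u]$.

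The proof then reduces to two computations on each side. First, $\pi_L(a) = 0$ in $Q$ (and $\pi_R(a) = 0$ in $\overline{Q}$). Decomposing $x = x(-1) + x(<-1)$ with $x(<-1) \in \m$, and noting that $\chi$ vanishes on $\k = \g(-1)$ by invariance of $(.|.)$ together with $e \in \g(2)$, the element $x - \chi(x)$ differs from $x(-1)$ by the generator $x(<-1) - \chi(x(<-1))$ of $I$; hence $(x - \chi(x))(1+I) = x(-1) + I$, which coincides with $x^{\ne}(1+I) = x(-1) + I$ by the very definition of the $\k^{\ne}$-action. The right-handed version is symmetric.

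Second, I identify left multiplication by $a$ on $Q$ with the adjoint action of $x$. Writing an arbitrary element as $v+I$, a direct calculation gives $a(v+I) = ((x - \chi(x))v - vx(-1)) + I$, and the difference with $[x,v] + I$ equals $-v(x(<-1) - \chi(x(<-1)))$, which lies in $I$ because $I$ is a left ideal. Combining the two steps yields $\pi_L([a,u]) = a \cdot \pi_L(u) = [x, \pi_L(u)]$. The right-handed analogue goes the same way: the parallel computation shows $\pi_R(u) \cdot a = -[x, \pi_R(u)]$ in $\overline{Q}$ (using that $\overline{I}$ is a right ideal), and the sign is absorbed into $\pi_R([a,u]) = -\pi_R(u) \cdot a$. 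The main thing to keep straight is the bookkeeping between the general $x \in \n$ and its $\k$-component $x(-1)$, together with the left/right ideal conventions; there is no deeper obstacle, the lemma being essentially a direct verification of compatibilities built into the definitions.
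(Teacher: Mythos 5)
Your proof is correct, and it is organized differently from the one in the paper. The paper argues monomial by monomial: it writes $u = v\,y_1^{\ne}\cdots y_n^{\ne}$, expands $[x-x^{\ne},u]=[x,v]\,y_1^{\ne}\cdots y_n^{\ne}-v\,[x^{\ne},y_1^{\ne}\cdots y_n^{\ne}]$, evaluates on $1+I$, and then matches the result with $[x,v\,y_n\cdots y_1]+I$, which requires tracking the order reversal coming from the definition of the $\k^{\ne}$-action and the fact that the commutators $[x,y_i]\in\m$ act by $\chi$ modulo $I$. You instead use the tautological fact that $u\mapsto u(1+I)$ is a homomorphism of left $U(\widetilde{\g})$-modules, which reduces the lemma to two pointwise checks: the generator $a=x-\chi(x)-x^{\ne}$ of $J$ annihilates $1+I$, and the module action of $a$ on $Q$ coincides with $\ad x$. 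The second check is exactly the extension from $\m$ to all of $\n$ of the identity $(x-\chi(x))(u+I)=[x,u+I]$ that the paper records only for $x\in\m$; both checks rest on the decomposition $x=x(-1)+x(<\!-1)$ and on $\chi|_{\g(-1)}=0$, which indeed follows from the invariance of $(.|.)$ together with the fact that the good grading is the eigenspace decomposition of $\ad(h+p)$ for $h+p\in\t$, so that $(\g(i)|\g(j))=0$ unless $i+j=0$. Your route buys a cleaner argument that avoids the PBW bookkeeping entirely and makes conceptually visible why the twisted adjoint action corresponds to the adjoint action on $Q$ (left multiplication by $x-\chi(x)-x^{\ne}$ on $Q$ \emph{is} $\ad x$); the paper's computation is more pedestrian but keeps everything at the level of explicit monomials, which it reuses in the proof of Theorem~\ref{t1}. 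The only blemishes are cosmetic: the sign of the difference $a(v+I)-([x,v]+I)=v\bigl(x(<\!-1)-\chi(x(<\!-1))\bigr)+I$ is $+$ rather than $-$, which is immaterial for membership in $I$, and both your argument and the paper's take for granted that the formulas in $\S$\ref{snonlinear} really do make $Q$ a left $U(\widetilde{\g})$-module.
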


\begin{proof}
We verify this in the left-handed case, the other case being
similar.
We need to show that
$[x-x^\ne,u](1 +I) = [x,u(1+I)]$ for $x \in \n$ and $u \in U(\widetilde{\g})$.
We may assume that $u = v y_1^\ne \cdots y_n^\ne$ for
$v \in U(\g)$ and $y_1,\dots,y_n \in \k$. Then,
$$
[x-x^\ne,u]  = [x,v] y_1^\ne\cdots y_n^\ne
- v [x^\ne, y_1^\ne \cdots y_n^\ne].
$$
Acting on $1+I$, we get that
\begin{align*}
[x-x^\ne,u](1+I)&=
[x,v] y_n \cdots y_1
+ v [x,y_n \cdots y_1] +I\\
&= [x, v y_n \cdots y_1] +I
= [x,vy_n\cdots y_1 +I]
= [x,u(1 +I)]
\end{align*}
as required.
\end{proof}

Let $J$ (resp.\ $\overline{J}$) be the left (resp.\ right) ideal of $U(\widetilde{\g})$ generated by the elements 
$
\{x - \chi(x) - x^{\ne}\:|\:x \in \n\}.
$
By the PBW theorem, we have that
\begin{equation*}
 U(\widetilde{\g}) =
U(\widetilde{\p}) \oplus J
\qquad\text{(resp.\ }
 U(\widetilde{\g})
= U(\widetilde{\p}) \oplus \overline{J}\text{)}.
\end{equation*}
Let
$\Pr:U(\widetilde{\g}) \rightarrow U(\widetilde{\p})$ (resp.\ $\overline{\Pr}:U(\widetilde{\g}) \rightarrow U(\widetilde{\p})$)
denote the corresponding linear projection.
Define \begin{align*}
U(\g,e) &:= \{u \in U(\widetilde{\p})\:|\:
\Pr([x-x^\ne,u]) = 0 \text{ for all }x \in \n\},\\
\overline{U}(\g,e) &:= \{u \in U(\widetilde{\p})\:|\:
\overline{\Pr}([x-x^\ne,u]) = 0 \text{ for all }x \in \n\}.
\end{align*}

\begin{Theorem}\label{t1}
The subspaces $U(\g,e)$ 
and $\overline{U}(\g,e)$ are subalgebras of
$U(\widetilde{\p})$, and the maps
$$
U(\g,e) \rightarrow Q^{\n},
 \ u \mapsto u (1+I),\qquad
\overline{U}(\g,e) \rightarrow \overline{Q}^{\n}, \ u \mapsto
(1+\overline{I}) u
$$
are well-defined algebra isomorphisms.
\end{Theorem}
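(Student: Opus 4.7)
The plan is to handle the left-handed statement; the right-handed version is entirely symmetric. Write $\phi : U(\widetilde{\g}) \to Q$, $u \mapsto u(1+I)$, for the action map.

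First I observe that since $x^\ne = 0$ for $x \in \m$ and $\chi(x) = 0$ for $x \in \k$, the left ideal $J$ is also generated by $\{x-\chi(x) : x \in \m\} \cup \{x-x^\ne : x \in \k\}$. Each such generator annihilates $1+I$ by direct computation using $x^\ne(u+I) = ux+I$, so $J \subseteq \ker\phi$. I next show $\phi|_{U(\widetilde{\p})}$ is a linear bijection by exhibiting an explicit inverse. Fixing ordered bases of $\p$ and $\k$, the classes $p_1\cdots p_m y_1\cdots y_k + I$ form a PBW basis of $Q$; define $\psi(p_1\cdots p_m y_1\cdots y_k + I) := p_1\cdots p_m y_k^\ne\cdots y_1^\ne$ in $U(\widetilde{\p})$. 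A short induction on $k$, using $y^\ne(v+I) = vy+I$ and $[\g,\k^\ne] = 0$, verifies $\phi\circ\psi = \id_Q$; and $\psi$ is a bijection between the respective PBW bases (after choosing a suitable ordering on $\k$), so in fact $\psi$ is the two-sided inverse of $\phi|_{U(\widetilde{\p})}$. Combined with $U(\widetilde{\g}) = U(\widetilde{\p}) \oplus J$, this forces $\ker\phi = J$.

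By Lemma \ref{nat}, for $u \in U(\widetilde{\p})$ and $x \in \n$ one has $\phi([x-x^\ne,u]) = [x,\phi(u)]$; decomposing $[x-x^\ne,u] = \Pr([x-x^\ne,u]) + j$ with $j \in J$ and using injectivity of $\phi|_{U(\widetilde{\p})}$, this vanishes iff $\Pr([x-x^\ne,u])=0$. Quantifying over $x \in \n$, $\phi$ restricts to a linear bijection $U(\g,e) \xrightarrow{\sim} Q^\n$.

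The remaining point is the algebra compatibility. The key observation is that $Q$ carries a right action of $Q^\n$ commuting with its left $U(\widetilde{\g})$-action. For $q = v+I \in Q^\n$, set $(u+I)\cdot q := uv+I$; well-definedness follows from $\n$-invariance $[x,v] \in I$ for $x \in \m$ combined with the identity $(x-\chi(x))v = v(x-\chi(x)) + [x,v]$ and the left-ideal property of $I$. Commutation of this right action with the left action of $\g$ is immediate from associativity, while commutation with $y^\ne$ ($y \in \k$) reduces to $yv - vy = [y,v] \in I$, again using $v+I \in Q^\n$. Applied with $u_1 \in U(\g,e)$ and $q = \phi(u_2)$ for $u_2 \in U(\g,e)$, this yields
\begin{equation*}
\phi(u_1 u_2) = u_1 \cdot \phi(u_2) = (u_1\cdot 1_Q)\cdot \phi(u_2) = \phi(u_1)\phi(u_2),
\end{equation*}
the final product taken in $Q^\n$. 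Hence $u_1 u_2 \in U(\g,e)$, so $U(\g,e)$ is a subalgebra, and $\phi|_{U(\g,e)}$ is an algebra isomorphism onto $Q^\n$. I expect the main obstacle to be this final step: verifying well-definedness and commutativity of the right $Q^\n$-action on $Q$ requires two separate applications of $\n$-invariance and careful tracking of left-ideal membership in $I$.
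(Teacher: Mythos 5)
Your proof is correct. The first half --- identifying $\ker\phi=J$ via the decomposition $U(\widetilde{\g})=U(\widetilde{\p})\oplus J$, and then using Lemma~\ref{nat} together with injectivity of $\phi$ on $U(\widetilde{\p})$ to see that $u\in U(\g,e)$ if and only if $\phi(u)\in Q^{\n}$ --- is the same as the paper's argument (the paper invokes the PBW theorem for the linear bijection $U(\widetilde{\p})\xrightarrow{\sim}Q$ rather than writing the inverse $\psi$ explicitly, but that is a cosmetic difference). Where you genuinely diverge is the multiplicativity step. The paper proves it by a direct monomial computation: writing the two elements as $\sum_x u_x x_1^\ne\cdots x_s^\ne$ and $\sum_y v_y y_1^\ne\cdots y_t^\ne$ and showing $x_s\cdots x_1\sum_y v_y y_t\cdots y_1+I=\sum_y v_y y_t\cdots y_1 x_s\cdots x_1+I$, which requires both the invariance $[x_i,\sum_y v_y y_t\cdots y_1]\in I$ and the auxiliary fact $I\,\k\subseteq I$. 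You instead package the same invariance input into the structural statement that $u v+I$ defines a right $Q^{\n}$-action on $Q$ commuting with the left $U(\widetilde{\g})$-action, after which $\phi(u_1u_2)=\phi(u_1)\phi(u_2)$ drops out formally; this only uses the left-ideal property of $I$ and avoids the $I\,\k\subseteq I$ bookkeeping entirely. Your version is arguably cleaner and makes visible the bimodule structure that underlies the identification $Q_\lag^{\m\oplus\lag}\cong\operatorname{End}_{U(\g)}(Q_\lag)^{\op}$ mentioned in Remark~\ref{premets}, at the cost of having to verify well-definedness of the right action; the paper's computation is more self-contained but more ad hoc.
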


\begin{proof}
We deal with the left-handed case, the
right-handed analogue being similar. Note for $x \in \n$ that
$(x-\chi(x)-x^\ne) (1+I) \subseteq I$, so $J (1+I) \subseteq I$. 
Hence $\Pr(u) (1+I) = u  (1+I)$ for every $u \in U(\widetilde{\g})$. Given this, Lemma~\ref{nat} implies that
$$
\Pr([x-x^\ne,u]) (1+I) =
[x-x^\ne, u] (1+I) = [x,u (1+I)]
$$
for $x \in \n$ and $u \in U(\widetilde{\p})$. 
By the PBW theorem,
the map $U(\widetilde{\p}) \rightarrow Q,\:u
\mapsto u(1+I)$ is a vector space isomorphism. Putting our
observations together, $u \in U(\widetilde{\p})$ belongs
to $U(\g,e)$ if and only if $u (1+I)$ belongs to $Q^{\n}$. Thus the 
map $U(\widetilde{\p})\stackrel{\sim}{\rightarrow} Q$ restricts to a
vector space isomorphism $U(\g,e) \stackrel{\sim}{\rightarrow}
Q^{\n}$.

Now take general elements $\sum_x u_x x_1^\ne \cdots x_s^\ne$ and
$\sum_y v_y y_1^\ne \cdots y_t^\ne$ in $U(\g,e)$ for $u_x, v_y \in
U(\p)$ and $x_1,\dots,x_s, y_1,\dots,y_t\in \k$. The image in $Q$ of
their product in $U(\widetilde{\p})$ is equal to
$$
\textstyle\sum_{x,y} u_x v_y y_t \cdots y_1 x_s \cdots x_1 +I.
$$
We claim that this is equal to the product of their images in
$Q^{\n}$, namely,
$$
\textstyle\sum_{x,y} u_x x_s \cdots x_1 v_y y_t \cdots y_1 + I.
$$
To see this, note that $\sum_y v_y y_t \cdots y_1 +I$ belongs to
$Q^{\n}$. Hence for each $i$ the commutator $[x_i, \sum_y v_y y_t
\cdots y_1]$ belongs to $I$. Also observe that $I x_{i-1} \cdots x_1
\subseteq I$. Using these two facts applied successively to $i=1,\dots,s$,
we get that
$$
x_s \cdots x_1 \sum_y v_y y_t\cdots y_1 +I
= \sum_y v_y y_t \cdots y_1 x_s \cdots x_1 +I
$$
and the claim follows.

The claim shows that the product of two elements in $U(\g,e)$ is again
an element of $U(\g,e)$, because the image of the product lies in
$Q^{\n}$. Hence $U(\g,e)$ is indeed a subalgebra of
$U(\widetilde{\p})$. At the same time, the claim establishes that
our vector space isomorphism is actually an algebra isomorphism.
\end{proof}

We refer to $U(\g,e)$ simply as
the {\em finite $W$-algebra} associated to $e$.

\subsection{Definition via BRST cohomology}\label{sBRST}
We now turn to the third 
definition of the 
$W$-algebra.
This has 
been proved to be equivalent to the Whittaker
model definition above 
in \cite{DDDHK}. Let
$$
\n^\ch = \{x^\ch\:|\:x \in \n\}
$$
be a ``charged'' copy of $\n$. As before, we allow ourselves to
write $x^\ch$ for any $x \in \g$, meaning $x(< 0)^\ch$. Recalling
that $\widetilde{\g} = \g \oplus \k^\ne$, let
$$
\widehat{\g} := \widetilde{\g} \oplus \n^* \oplus \n^\ch
$$
viewed as a 
non-linear Lie superalgebra with even part equal to
$\widetilde{\g}$, odd part equal to $\n^* \oplus \n^\ch$, and
non-linear Lie bracket $[.,.]$ defined as follows. It is equal
to the non-linear Lie bracket defined above on $\widetilde{\g}$. It
is identically zero on $\n^*$, $\n^\ch$ or between elements of
$\widetilde{\g}$ and $\n^* \oplus \n^\ch$. Finally $[f, x^\ch] :=
\lan f,x \ran$ for $f \in \n^*, x \in \n$, where $\lan f,x \ran$
denotes the natural pairing of $f \in \n^*$ with $x \in \n$. We also
have the subalgebra
$$
\widehat{\p} := \widetilde{\p} \oplus \n^* \oplus \n^\ch
$$
of $\widehat{\g}$. We put 
the {\em cohomological
grading} on $\widehat{\g}$,
hence also on $\widehat{\p}$, consistent with the $\Z_2$-grading, by
declaring that elements of $\widetilde{\g}$ are in degree $0$,
elements of $\n^*$ are in degree $1$, and elements of $\n^\ch$ are
in degree $-1$.
It induces gradings 
$$
U(\widehat{\g}) = \bigoplus_{i \in \Z} U(\widehat{\g})^i, \qquad
U(\widehat{\p}) = \bigoplus_{i \in \Z} U(\widehat{\p})^i.
$$

Fix a basis
$b_1,\dots,b_r$ for $\n$ such that $b_i$ 
lies in the $\be_i$-weight space of $\g(-d_i)$ 
for some $\be_i \in \t^*$
and $d_i > 0$. Define the structure constants $\gamma_{i,j,k} \in
\C$ from
$$
[b_i, b_j] = \sum_{k=1}^r \gamma_{i,j,k} b_k.
$$
Let $f_1,\dots,f_r$ be the dual basis for $\n^*$. The coadjoint
action of $\n$ on $\n^*$ defined by $\lan (\coad b) ( f), b'\ran =
-\lan f, (\ad b)(b')\ran$ for $b, b' \in \n$ and $f \in \n^*$
satisfies
$$
(\coad b_i)(f_j) = \sum_{k=1}^r \gamma_{k,i,j} f_k.
$$
Let $d: U(\widehat{\g}) \rightarrow U(\widehat{\g})$ be the
superderivation of cohomological degree $1$ defined by taking the
supercommutator with the odd element
$$
\sum_{i=1}^r f_i (b_i - \chi(b_i) - b_i^\ne) - {\textstyle\frac{1}{2}}
\sum_{i,j=1}^r f_i f_j [b_i, b_j]^\ch.
$$
As in \cite{DDDHK}, one checks:
\begin{align*}
d(x) &= \sum_{i=1}^r f_i [b_i, x]&(x \in \g),\\
d(f) &= {\textstyle\frac{1}{2}}\sum_{i=1}^r f_i \,(\coad b_i) (f)&(f \in \n^*),\\
d(x^\ch) &= x - \chi(x) - x^\ne + \sum_{i=1}^r f_i [b_i, x]^\ch&(x \in \n),\\
d(x^\ne) &= \sum_{i=1}^r f_i \chi([b_i, x])&(x \in \k).
\end{align*}
Using these formulae it is easy to check that $d^2 = 0$, i.e.\
$(U(\widehat{\g}), d)$ is 
a differential graded superalgebra.
Let $H^\bullet(U(\widehat{\g}), d)$ be its 
cohomology. It is
known from \cite{DDDHK} that this is concentrated in
cohomological degree $0$. 
So 
$$
H^\bullet(U(\widehat{\g}), d) = \ker(d:U(\widehat{\g})^0 \rightarrow
U(\widehat{\g})^1) \:\big / \: \im(d:U(\widehat{\g})^{-1}
\rightarrow U(\widehat{\g})^0).
$$
Note by the PBW theorem that
$$
U(\widehat{\g})^0 = U(\widetilde{\g}) \oplus
\n^* U(\widehat{\g})^0 \n^\ch
\qquad
\text{(resp.\ }
U(\widehat{\g})^0 = U(\widetilde{\g}) \oplus \n^\ch
U(\widehat{\g})^0 \n^*\text{)},
$$
with $\n^* U(\widehat{\g})^0 \n^\ch$ (resp.\ $\n^\ch U(\widehat{\g})^0
\n^*$) being a two-sided ideal. So we can define 
a linear map
$$
q:U(\widehat{\g})^0 \twoheadrightarrow Q
\qquad
\text{(resp.\ }
\overline{q}:U(\widehat{\g})^0 \twoheadrightarrow \overline{Q}\text{)}
$$
such that
$q(u) = u (1+I)$ 
(resp.\ $\overline{q}(u) = (1+\overline{I}) u$)
 for
$u \in U(\widetilde{\g})$
and
$$
\ker q = J \oplus
\n^* U(\widehat{\g})^0 \n^\ch
\qquad
\text{(resp.\ }
\ker \overline{q} = \overline{J} \oplus \n^\ch U(\widehat{\g})^0
\n^*\text{)}.
$$
By the above explicit formulae for the differential $d$, it follows
that $d$ maps $U(\widehat{\g})^{-1}$ into $J \oplus \n^*
U(\widehat{\g})^0 \n^\ch$ (resp.\ $\overline{J} \oplus \n^\ch
U(\widehat{\g})^0 \n^*$). So the restriction of $q$ (resp.\ $\overline{q}$) to $\ker(d:U(\widehat{\g}^{0}) \rightarrow
U(\widehat{\g})^1)$ induces a well-defined linear map
$$
q:H^\bullet(U(\widehat{\g}), d) \rightarrow Q
\qquad\text{(resp.\ }
\overline{q}:H^\bullet(U(\widehat{\g}), d) \rightarrow
\overline{Q}\text{)}.
$$
In \cite{DDDHK}, it is proved that $q$
is
an {\em algebra isomorphism} between
$H^\bullet(U(\widehat{\g}), d)$ and $Q^{\n}$. A 
similar
argument shows that $\overline{q}$
is an algebra
isomorphism between $H^\bullet(U(\widehat{\g}), d)$ and
$\overline{Q}^{\n}$. This already shows that
$
Q^{\n} \cong \overline{Q}^{\n}
$
as algebras,
though it does not give the isomorphism
as explicitly as we would like.

To remedy this, note that
the projection along the 
decomposition
$$
U(\widehat{\p})^0 = U(\widetilde{\p}) \oplus
\n^* U(\widehat{\p})^0 \n^\ch
\qquad
\text{(resp.\ }
U(\widehat{\p})^0 = U(\widetilde{\p}) \oplus \n^\ch
U(\widehat{\p})^0 \n^*\text{)}
$$
defines a
surjective algebra homomorphism
$$
p:U(\widehat{\p})^0 \twoheadrightarrow U(\widetilde{\p})
\qquad
\text{(resp.\ }
\overline{p}:U(\widehat{\p})^0 \twoheadrightarrow U(\widetilde{\p})
\text{).}
$$
Moreover the following diagrams commute:
$$
\begin{CD}
U(\widehat{\p})^0&@>>>&U(\widehat{\g})^0\\
@Vp VV&&@V Vq V\\
U(\widetilde{\p}) &@>\sim>>&Q
\end{CD}
\qquad\qquad
\begin{CD}
U(\widehat{\p})^0&@>>>&U(\widehat{\g})^0\\
@V\overline{p} VV&&@VV \overline{q} V\\
U(\widetilde{\p}) &@>\sim>>&\overline{Q}
\end{CD}
$$
where the top maps are the inclusions and the bottom maps are the
multiplication maps defined like in Lemma~\ref{nat}. For the next
lemma, recall that the basis element $b_i \in \n$ is of $\t$-weight $\be_i$.

\begin{Lemma}\label{fine}
$\be := \sum_{i=1}^r \be_i \in \t^*$ extends uniquely
to a character of $\p$.
\end{Lemma}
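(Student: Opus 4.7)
The plan is to establish existence and uniqueness of the extension by two separate arguments.

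For existence, I would define $\tilde\be : \p \to \C$ by $\tilde\be(x) := \tr(\ad x|_{\g/\p})$, i.e.\ the trace of the adjoint action of $x \in \p$ on the quotient $\p$-module $\g/\p$. Since $\g/\p$ is a finite-dimensional $\p$-module (well defined as $\p$ is a subalgebra of $\g$) and the trace of any commutator of endomorphisms of a finite-dimensional vector space vanishes, $\tilde\be$ is automatically a Lie algebra homomorphism $\p \to \C$. The projection $\g \twoheadrightarrow \g/\p$ identifies the basis $b_1, \dots, b_r$ of $\n$ with a basis of $\g/\p$ and preserves $\t$-weights, so
$$
\tilde\be(t) \;=\; \tr(\ad t|_{\g/\p}) \;=\; \sum_{i=1}^r \be_i(t) \;=\; \be(t)
$$
for all $t \in \t$; thus $\tilde\be$ extends $\be$ as required.

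For uniqueness, it suffices to prove $\p = \t + [\p,\p]$, since then every character of $\p$ is determined by its restriction to $\t$. Decompose $\p = \h \oplus \u$ with $\h = \g(0)$ the Levi factor and $\u = \bigoplus_{j>0} \g(j)$ its nilradical. Since $\g$ is reductive and $\t$ is a maximal toral subalgebra, $\t$ is in fact a Cartan subalgebra of $\g$; the reductive algebra $\h$ therefore satisfies $\h = Z(\h) \oplus [\h,\h]$ with $Z(\h) \subseteq \t$, giving $\h \subseteq \t + [\p,\p]$. For $\u$: because $\t$ is a Cartan, its zero weight space on $\g$ equals $\t$, so every $\t$-weight occurring on $\u$ is nonzero. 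For each root vector $x$ in a weight space $\u_\alpha$ with $\alpha \neq 0$, choose $t \in \t$ with $\alpha(t) \neq 0$; then $x = \alpha(t)^{-1}[t,x] \in [\t,\u] \subseteq [\p,\p]$. Hence $\u \subseteq [\p,\p]$, proving $\p = \t + [\p,\p]$.

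The main potential obstacle is uniqueness, specifically the inclusion $\u \subseteq [\p,\p]$. This is a point where one must be careful, because for a general (non-Dynkin) good grading one does not have $[h,x] = jx$ for $x \in \g(j)$, so $\u$ is not obviously recovered as $[h, \u]$. The remedy is to exploit instead that $\t$ is a Cartan subalgebra of the reductive algebra $\g$, so that no zero $\t$-weights appear on $\u$; everything else is routine.
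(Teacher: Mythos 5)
Your proof is correct and takes essentially the same route as the paper's: the paper realizes $\be$ on $\h$ as $\tr(\ad x|_{\n})$ and extends by zero on the nilradical, while you package the identical trace argument as $\tr(\ad x|_{\g/\p})$, which produces the character on all of $\p$ at once since $\g/\p \cong \n$ as $\t$-modules. The uniqueness, which the paper dismisses as clear, is exactly the $\p = \t + [\p,\p]$ argument you spell out, and your caution about non-Dynkin good gradings (deducing $\u \subseteq [\p,\p]$ from the Cartan subalgebra $\t$ rather than from $\ad h$-eigenvalues) is well placed.
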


\begin{proof}
For $x \in \h$, the linear map $\ad x: \g \rightarrow \g$ leaves
$\n$ invariant, so the map
$$
\be: \h \rightarrow \C, \qquad x\mapsto \tr(\ad x|_{\n})
$$
is a well-defined Lie algebra homomorphism. Extending by zero on the
nilradical of $\p$ we get the desired Lie algebra homomorphism
$\be: \p \rightarrow \C$. The uniqueness is clear.
\end{proof}

In view of the lemma, we can define shift automorphisms
$$
S_{\pm\be}:U(\widetilde{\p}) \rightarrow U(\widetilde{\p}), \quad x\mapsto x \pm \be(x),\ y^\ne \mapsto y^\ne\qquad(x \in \p, y \in \k).
$$
Of course, we have
that $(S_{\pm\be})^{-1} = S_{\mp\be}$.
The definition
 of the function $\phi$ 
in the next lemma is based on
a construction of Arakawa \cite[2.5]{Arakawa} for regular nilpotents.

\begin{Lemma}\label{tidy}
There are well-defined algebra homomorphisms
$$
\phi: U(\widetilde{\p}) \hookrightarrow
U(\widehat{\p})^0,\qquad\qquad
\overline{\phi}: U(\widetilde{\p}) \hookrightarrow
U(\widehat{\p})^0
$$
such that
\begin{align*}
\phi(x) &= x + \textstyle\sum_{i} f_i [b_i, x]^\ch,
&
\overline{\phi}(x) &= x - \textstyle\sum_{i} [b_i, x]^\ch f_i,\\
\phi(y^\ne) &= y^\ne,
&\overline{\phi}(y^\ne) &= y^\ne,
\end{align*}
for $x \in \p$ and $y \in \k$. Moreover,
\begin{itemize}
\item[(1)]
$p \circ \phi = \overline{p} \circ \overline{\phi} =
\id_{U(\widetilde{\p})}$;
\item[(2)]
$\overline{\phi} = \phi \circ S_\be$.
\end{itemize}
Hence $\phi$ and $\overline{\phi}$ are injective,
$p \circ \overline{\phi} = S_\be$
and \,$\overline{p} \circ \phi = S_{-\be}$.
\end{Lemma}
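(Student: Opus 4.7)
The plan is to verify that the proposed formulas for $\phi$ and $\overline{\phi}$ respect the defining non-linear Lie relations of $\widetilde{\p} = \p \oplus \k^\ne$, which by the universal property gives the algebra homomorphisms; then (1) and (2) will drop out. Write $L_x := \sum_i f_i [b_i, x]^\ch$ so that $\phi(x) = x + L_x$. Since the non-linear brackets $[\widetilde{\g}, \n^*]$ and $[\widetilde{\g}, \n^\ch]$ vanish by construction, elements of $\p$ (super)commute with $L_{x'}$ inside $U(\widehat{\p})^0$. This instantly handles the two easy relations $[\phi(x), \phi(y^\ne)] = 0$ and $[\phi(y^\ne), \phi(y'^\ne)] = \lan y|y'\ran$, and reduces the main relation to verifying
$$[\phi(x), \phi(x')] = [x,x'] + [L_x, L_{x'}] \stackrel{?}{=} [x, x'] + L_{[x,x']}.$$

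The heart of the argument is therefore the identity $[L_x, L_{x'}] = L_{[x, x']}$. I would expand the commutator using only the basic anti-bracket $f_i [b_j, x]^\ch + [b_j, x]^\ch f_i = \lan f_i, [b_j, x]\ran$ together with the anticommutativity of the $f_i$'s and of the $b_i^\ch$'s among themselves. A short reorganization shows that the "$f_i f_j$"-type quadratic-ghost terms cancel (two sign flips, one from anticommuting the $f$'s and one from anticommuting the $^\ch$'s), leaving
$$
[L_x, L_{x'}] = \sum_i f_i\bigl([[b_i, x](<0), x']^\ch - [[b_i, x'](<0), x]^\ch\bigr),
$$
after using $\sum_j \lan f_j, c\ran b_j = c(<0)$ to collapse one of the summations. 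The key simplification is that $a^\ch = 0$ for every $a \in \p$ (by definition of $a^\ch$ as $a(<0)^\ch$), combined with $[\p, \p] \subseteq \p$: this forces $[[b_i, x](\geq 0), x']^\ch = 0$, so the inner projection $(<0)$ can be dropped, and then the Jacobi identity $[[b_i, x], x'] - [[b_i, x'], x] = [b_i, [x, x']]$ collapses the result to $\sum_i f_i [b_i, [x, x']]^\ch = L_{[x, x']}$. The argument for $\overline{\phi}$ runs identically, the only change being a sign in the relevant anti-bracket.

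Property (1) needs no real work: $p$ kills the ideal $\n^* U(\widehat{\p})^0 \n^\ch$ which contains $L_x$, so $p \circ \phi = \id$ on the generator $x$ (and trivially on $y^\ne$), and similarly for $\overline{p} \circ \overline{\phi}$. For (2), I compute
$$
\overline{\phi}(x) - \phi(x) = -\sum_i\bigl(f_i [b_i, x]^\ch + [b_i, x]^\ch f_i\bigr) = -\sum_i \lan f_i, [b_i, x]\ran = \sum_i \lan f_i, [x, b_i]\ran,
$$
which equals $\tr(\ad x|_\n) = \be(x)$ when $x \in \h$ by Lemma~\ref{fine}, and equals $0 = \be(x)$ when $x$ lies in the nilradical of $\p$ by a grading argument: for $x \in \g(j)$ with $j > 0$, $[x, b_i] \in \g(j - d_i)$ has no $b_i$-component since $j \ne 0$. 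Hence $\overline{\phi}(x) = \phi(x) + \be(x) = \phi(S_\be(x))$, and equality on $y^\ne$ is trivial since $S_\be$ fixes $y^\ne$. The remaining formulas $p \circ \overline{\phi} = S_\be$ and $\overline{p} \circ \phi = S_{-\be}$ then follow formally from (1), (2), and the invertibility of $S_\be$. I expect the only real obstacle to be keeping track of signs in the computation of $[L_x, L_{x'}]$; everything else is clean once one notices the Jacobi-type collapse made possible by $\p^\ch = 0$.
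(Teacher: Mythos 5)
Your proof is correct and follows essentially the same route as the paper: a direct verification that the ghost-corrected generators satisfy the defining relations, with the quadratic $f_if_j$-terms cancelling by the two sign flips and the linear terms collapsing via the Jacobi identity after dropping the $(<0)$ projection, plus the trace computation identifying the shift with $\be$. The only cosmetic difference is that the paper \emph{defines} $\overline{\phi}:=\phi\circ S_\be$ and then derives its formula on generators, whereas you verify the formula for $\overline{\phi}$ independently and then check (2); both are fine.
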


\begin{proof}
To see that there is an
algebra homomorphism
$\phi:U(\widetilde{\p}) \rightarrow U(\widehat{\p})^0$ defined on
generators as in the statement of the lemma, 
we need to
show that $\phi(x) \phi(y) - \phi(y) \phi(x) = \phi([x,y])$ for all
$x, y \in \widetilde{\p}$. 
If $x, y\in\k^\ne$, the result is clear. 
If $x \in \p$ and $y \in \k^\ne$ or vice
versa, both sides are obviously zero. It remains to consider the case that $x, y \in
\p$.  Then
\begin{align*}
\phi(x)\phi(y) &=
xy + \textstyle\sum_{i} x f_i [b_i, y]^\ch + \textstyle\sum_{i} f_i [b_i, x]^\ch y
+ \textstyle\sum_{i,j} f_i [b_i, x]^\ch f_j [b_j,y]^\ch,\\
\phi(y)\phi(x) &=
yx + \textstyle\sum_{i} y f_i [b_i, x]^\ch + \textstyle\sum_{i} f_i [b_i, y]^\ch x
+ \textstyle\sum_{i,j} f_j [b_j, y]^\ch f_i [b_i,x]^\ch.
\end{align*}
Now note that
\begin{align*}
\textstyle\sum_{i,j} f_i [b_i,x]^\ch &f_j [b_j,y]^\ch\\
&=
-\textstyle\sum_{i,j} f_i f_j [b_i, x]^\ch [b_j,y]^\ch
+
\textstyle\sum_{i,j}f_i \lan f_j, [b_i,x](< 0)\ran  [b_j,y]^\ch\\
&=
-\textstyle\sum_{i,j} f_i f_j [b_i, x]^\ch [b_j,y]^\ch
+
\textstyle\sum_{i} f_i [[b_i,x](< 0),y]^\ch\\
&=
-\textstyle\sum_{i,j} f_i f_j [b_i, x]^\ch [b_j,y]^\ch
+
\textstyle\sum_{i} f_i [[b_i,x],y]^\ch.
\end{align*}
Similarly,
\begin{align*}
\textstyle\sum_{i,j} f_j [b_j,y]^\ch f_i [b_i,x]^\ch
&=
-\textstyle\sum_{i,j} f_j f_i [b_j, y]^\ch [b_i,x]^\ch
+
\textstyle\sum_{j} f_j [[b_j,y],x]^\ch\\
&=-\textstyle\sum_{i,j} f_i f_j  [b_i,x]^\ch [b_j, y]^\ch
-
\textstyle\sum_{i} f_i [x,[b_i,y]]^\ch.
\end{align*}
Thus, we finally get the required equality:
\begin{align*}
\phi(x)\phi(y)
-
\phi(y)\phi(x)
&= [x,y] + \textstyle\sum_{i} f_i \left([[b_i,x],y]^\ch + [x, [b_i,y]]^\ch\right) \\
&= [x,y] + \textstyle\sum_{i} f_i [b_i,[x,y]]^\ch =
\phi([x,y]).
\end{align*}

Now {\em define} an algebra homomorphism $\overline{\phi} := \phi \circ S_\be$.
For $y\in\k$ and $x \in \p$
we have that 
\begin{align*}
\overline{\phi}(y^\ne)& = \phi(S_\be(y^\ne)) = y^\ne,
\\
\overline{\phi}(x) &= \phi(S_\be(x)) = \phi(x + \be(x))
=
x+\be(x) + \textstyle\sum_{i} f_i [b_i, x]^\ch\\
&= x + \be(x) + \textstyle\sum_{i} \lan f_i,[b_i,x](< 0)\ran -
\sum_{i} [b_i,x]^\ch f_i.
\end{align*}
If $x$ belongs to the nilradical of $\p$, then $\lan f_i, [b_i,x](<
0)\ran = 0 = - \be(x)$ by degree considerations. Instead if $x
\in \h$ then
$$
\textstyle\sum_{i} \lan f_i ,[b_i,x](< 0)\ran = -\sum_{i} \lan
f_i,(\ad x)(b_i)\ran = - \tr (\ad x|_{\n}) = - \be(x),
$$
recalling the proof of Lemma~\ref{fine}.
Hence
$
\overline{\phi}(x) = x - \textstyle\sum_{i} [b_i, x]^\ch f_i
$
as in the statement of the lemma.
Finally the property (1) is obvious.
\end{proof}

\begin{Lemma}\label{mess}
For $u \in U(\widetilde{\p})$, we have that
$$
d(\phi(u)) = \textstyle\sum_{i} f_i \phi(\Pr ([b_i - b_i^\ne, u])),
\ \ d(\overline{\phi}(u)) = \textstyle\sum_{i} \overline{\phi}(\overline{\Pr}
([b_i - b_i^\ne, u]))f_i.
$$
\end{Lemma}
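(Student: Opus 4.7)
The plan is to prove the first identity in detail; the second is then either obtained by the mirror-image argument (using that $\overline{J}$ is a right ideal in place of the left ideal $J$) or deduced from the first using the relation $\overline\phi = \phi\circ S_\be$ from Lemma~\ref{tidy}. Set
\[
D(u) := d(\phi(u)) - \sum_{i=1}^r f_i\,\phi\bigl(\Pr([b_i-b_i^\ne,u])\bigr),
\]
a linear map $U(\widetilde\p)\to U(\widehat\p)^1$; the goal is $D\equiv 0$. Since $D$ is linear and $U(\widetilde\p)$ is generated by $\p\cup\k^\ne$, the strategy is to verify the identity on generators and then extend by induction on the length of a PBW monomial via a product rule.

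For the generators: the case $u=y^\ne$ is essentially immediate since $[b_i,y^\ne]=0$ in $\widetilde\g$, while $[b_i^\ne,y^\ne]=\langle b_i|y\rangle=-\chi([b_i,y])$ is a scalar, so $\Pr$ and $\phi$ act trivially on it and the RHS reduces to $\sum_i f_i\chi([b_i,y])=d(y^\ne)$. The case $u=x\in\p$ is a direct bookkeeping computation: expand $d(\phi(x))$ using $\phi(x)=x+\sum_i f_i[b_i,x]^\ch$ and the super-Leibniz rule, then substitute the explicit formulas for $d(x)$, $d(f_i)$, and $d(z^\ch)$; expand the RHS using $[b_i^\ne,x]=0$ together with $\Pr([b_i,x])=[b_i,x]({\ge}0)+\chi([b_i,x])+[b_i,x]^\ne$ (noting $\chi|_\p=0$) and the definition of $\phi$. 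After the unambiguous terms cancel, what is left is the identity
\[
\tfrac12\sum_{j,k} f_j f_k [[b_k,b_j],x]^\ch = \sum_{i,j} f_i f_j [b_j,[b_i,x]]^\ch,
\]
which follows by Jacobi applied to $[[b_j,b_i],x]=[b_j,[b_i,x]]-[b_i,[b_j,x]]$ together with the anticommutativity $f_if_j=-f_jf_i$.

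For the inductive step, the target identity $D(uv)=D(u)\phi(v)+\phi(u)D(v)$ is obtained by expanding $d(\phi(u)\phi(v))$ via super-Leibniz, splitting $[b_i-b_i^\ne,uv]=[b_i-b_i^\ne,u]v+u[b_i-b_i^\ne,v]$, and using that $J$ is a left ideal to conclude $\Pr(uw)=u\Pr(w)$ for the second summand. The two sources of discrepancy are: (a) $\Pr$ does not commute with right multiplication, since $J$ is not a right ideal; writing $[b_i-b_i^\ne,u]=A_i+B_i$ with $A_i\in U(\widetilde\p)$ and $B_i\in J$, the correction term is $\Pr([B_i,v])$, which is re-expressed using Jacobi; (b) $f_i$ does not supercommute with $\phi(u)$, and a short computation using $[f_j,[b_k,x]^\ch]=\langle f_j,[b_k,x]({<}0)\rangle$ yields an explicit correction for $\phi(u)f_i-f_i\phi(u)$.

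The main obstacle is Step 2: the two families of correction terms from (a) and (b) must exactly cancel. This cancellation is not incidental but traces back to the same Jacobi-plus-antisymmetry mechanism used in Step 1. Step 1 itself is a routine (if lengthy) unwinding of definitions, and the second identity is handled by the symmetric argument after reversing the order of multiplications and replacing $J$ by $\overline J$.
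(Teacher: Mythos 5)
Your proposal follows essentially the same route as the paper: verify the identity on the generators $y^\ne$ and $x\in\p$ (the latter by the same Leibniz-plus-Jacobi computation), then extend by a Leibniz-type induction in which the two correction terms you isolate --- from $\Pr$ failing to commute with right multiplication, and from $f_i$ failing to commute with $\phi(u)$ --- do indeed cancel exactly as you assert. One small caveat: that cancellation is not really a Jacobi phenomenon; in the paper it comes from the left-ideal property of $J$ (so $\Pr(B_i v)=\Pr([B_i,v])$) together with the dual-basis identity $\sum_i\langle f_i,z\rangle b_i=z$, which shows the two corrections are literally the same element, and the induction only ever needs the product rule with $u$ a generator in $\p$ or $\k^\ne$, so the general-$(u,v)$ version you state need not be established.
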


\begin{proof}
We deal with the left-handed version, the argument
for the right-handed analogue being 
similar. We first check
the result for any $u \in U(\k^\ne)$ by induction on the
natural filtration. The base case is when $u$ is a
scalar, which is trivial as both sides are zero. For the induction
step, take $u = y^{\ne} v$ for $y \in \k$ and $v \in U(\k^\ne)$.
Since $[b_i,v] = 0$, the induction hypothesis gives 
$$
d(\phi(v)) =
\textstyle\sum_{i} f_i \phi(\Pr([b_i-b_i^\ne,v]))
=\textstyle\sum_{i} f_i \phi(\Pr([-b_i^\ne,v])).
$$
Hence,
\begin{align*}
d(\phi(y^\ne v)) &= d(\phi(y^\ne )\phi(v)) = d(\phi(y^\ne))\phi(v) + \phi(y^\ne)
d(\phi(v))\\
&= \textstyle\sum_{i} f_i \chi([b_i,y]) \phi(v) +
\phi(y^\ne)
\textstyle\sum_{i} f_i \phi(\Pr([-b_i^\ne,v]))\\
&= \textstyle\sum_{i} f_i \phi\big(\chi([b_i,y]) v +
y^\ne\Pr([-b_i^\ne,v])\big)\\
&= \textstyle\sum_{i} f_i \phi\big([-b_i^\ne,y^\ne]v +
\Pr(y^\ne[-b_i^\ne,v])\big)
\end{align*}\begin{align*}
\phantom{d(\phi(y^\ne v))}
&= \textstyle\sum_{i} f_i \phi\big(\Pr([-b_i^\ne,y^\ne]v +
y^\ne[-b_i^\ne,v])\big)\\&=
\textstyle\sum_{i} f_i \phi\big(\Pr([-b_i^\ne,y^\ne v])\big)=\textstyle\sum_{i} f_i \phi\big(\Pr([b_i-b_i^\ne,y^\ne v])\big)
\end{align*}
as we wanted.

Next we prove the result for $x \in \p$, when
\begin{align*}
d(\phi(x)) &= d(x)+
\textstyle\sum_{i} d(f_i[b_i,x]^\ch)\\
&= \textstyle\sum_{i}f_i[b_i,x]+
\frac{1}{2}\textstyle\sum_{i,j} f_j (\coad b_j)(f_i) [b_i,x]^\ch
- \textstyle\sum_{i} f_i [b_i,x](< 0)\\&\qquad +\textstyle\sum_{i} f_i  \chi([b_i,x]) +\textstyle\sum_{i} f_i  [b_i,x]^\ne
- \textstyle\sum_{i,j} f_i f_j [b_j,[b_i,x](< 0)]^\ch\\
&= \textstyle\sum_{i}f_i[b_i,x](\geq 0)
+\textstyle\sum_{i} f_i  \chi([b_i,x]) +\textstyle\sum_{i} f_i  [b_i,x]^\ne\\
&\qquad
+\textstyle\frac{1}{2}\sum_{i,j} f_j (\coad b_j)(f_i) [b_i,x]^\ch
- \textstyle\sum_{i,j} f_i f_j [b_j,[b_i,x](< 0)]^\ch
\\
&= \textstyle\sum_{i}f_i\Pr([b_i,x])
+\textstyle\frac{1}{2}\sum_{i,j,k} \gamma_{k,j,i} f_j f_k
 [b_i,x]^\ch
\\&\qquad- \textstyle\sum_{i,j} f_i f_j [b_j,[b_i,x](< 0)]^\ch\\
\phantom{d(\phi(x))}
&=
\textstyle\sum_{i}f_i\phi(\Pr([b_i,x]))
+\frac{1}{2}\textstyle\sum_{i,j,k} \gamma_{k,j,i} f_j f_k
 [b_i,x]^\ch\\
&\qquad
-\textstyle\sum_{i,j}f_if_j[b_j,\Pr([b_i,x])]^\ch
- \textstyle\sum_{i,j} f_i f_j [b_j,[b_i,x](< 0)]^\ch
\\
&=
\textstyle\sum_{i}f_i\phi(\Pr([b_i,x]))
+\frac{1}{2}\textstyle\sum_{i,j,k} \gamma_{k,j,i} f_j f_k
 [b_i,x]^\ch\\
&\qquad
-\textstyle\sum_{i,j}f_if_j[b_j,[b_i,x](\geq 0)]^\ch
- \textstyle\sum_{i,j} f_i f_j [b_j,[b_i,x](< 0)]^\ch\\
&=
\textstyle\sum_{i}f_i\phi(\Pr([b_i,x]))
+\frac{1}{2}\textstyle\sum_{i,j,k} \gamma_{j,i,k} f_i f_j
 [b_k,x]^\ch\\
&\qquad -
\textstyle\sum_{i,j}f_if_j[b_j,[b_i,x]]^\ch.
\end{align*}
Finally we observe that
\begin{align*}
\textstyle\sum_{i,j}f_if_j[b_j,[b_i,x]]^\ch &=
\textstyle\sum_{i,j}f_if_j[b_i,[b_j,x]]^\ch
+ \textstyle\sum_{i,j}f_if_j[[b_j,b_i],x]^\ch\\
&=
-\textstyle\sum_{i,j}f_if_j[b_j,[b_i,x]]^\ch
+
\textstyle\sum_{i,j,k}\gamma_{j,i,k}f_if_j[b_k,x]^\ch.
\end{align*}
Hence
$$
\textstyle\sum_{i,j}f_if_j[b_j,[b_i,x]]^\ch =
\frac{1}{2}
\textstyle\sum_{i,j,k}\gamma_{j,i,k}f_if_j[b_k,x]^\ch
$$
and we have proved that
$$
d(\phi(x)) =
\textstyle\sum_{i} f_i \phi(\Pr([b_i,x]))
=\textstyle\sum_{i} f_i \phi(\Pr([b_i-b_i^\ne,x]))
$$
as required.

To finish the proof, we use induction on the standard filtration on
$U(\widetilde{\g})$. Take any $x \in \p$ and $u \in
U(\widetilde{\p})$. Note that
$[\phi(x), f_i] = \textstyle\sum_{j} f_j \lan f_i, [b_j,x](< 0) \ran$.
Using this and the induction hypothesis, we get that
\begin{align*}
d(\phi(xu)) &=
d(\phi(x) \phi(u)) =
d(\phi(x)) \phi(u) + \phi(x) d(\phi(u))\\
&= \textstyle\sum_{i} f_i \phi(\Pr([b_i-b_i^\ne,x])) \phi(u) +
\textstyle\sum_{i} \phi(x) f_i \phi(\Pr([b_i-b_i^\ne,u]))\\
&= \textstyle\sum_{i} f_i \phi(
\Pr([b_i,x]) u) + \sum_i f_i \phi(x\Pr([b_i-b_i^\ne,u]))
\\&\qquad+ \textstyle\sum_{i,j} f_i  \phi\big(\lan f_j, [b_i,x](< 0) \ran\Pr([b_j-b_j^\ne,u])\big)\\
&= \textstyle\sum_{i} f_i \phi\big
(\Pr([b_i,x]) u + x\Pr([b_i-b_i^\ne,u])
\\&\qquad
+ \Pr([[b_i,x](< 0),u])
- \Pr([[b_i,x]^\ne,u])\big).
\end{align*}
We are trying to show that this equals
$$
\textstyle\sum_i f_i \phi\big(\Pr([b_i-b_i^\ne,xu])\big)
=
\sum_i f_i \phi\big(\Pr([b_i,x] u)
+ \Pr(x [b_i-b_i^\ne,u])\big).
$$
So it remains to show  that
$$
\Pr([b_i,x] u)
-\Pr([b_i,x]) u
- \Pr([[b_i,x](< 0),u])
+ \Pr([[b_i,x]^\ne,u]) = 0.
$$
To see this, we expand each term separately:
\begin{align*}
\Pr([b_i,x] u) &= [b_i,x](\geq 0) u + \Pr([b_i,x](< 0) u),\\
-\Pr([b_i,x]) u &= -[b_i,x](\geq 0) u - [b_i,x]^\ne u - \chi([b_i,x])u,\\
-\Pr([[b_i,x](< 0), u]) &=
-\Pr([b_i,x](< 0) u) + u [b_i,x]^\ne + u \chi([b_i,x]),\\
\Pr([[b_i,x]^\ne, u]) &=
[b_i,x]^\ne u - u [b_i,x]^\ne.
\end{align*}
Adding these
together gives zero, completing the
induction step.
\end{proof}

\begin{Theorem}\label{tidish}
We have that
$$
U(\g,e) = \{u \in U(\widetilde{\p})\:|\: d(\phi(u)) = 0\},\quad
\overline{U}(\g,e) = \{u \in U(\widetilde{\p})\:|\:
d(\overline{\phi}(u)) = 0\}.
$$
Moreover, we have that $\ker d = \phi(U(\g,e)) \oplus \im\, d =
\overline{\phi}(\overline{U}(\g,e)) \oplus \im\, d$.
\end{Theorem}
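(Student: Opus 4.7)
My plan is to deduce both assertions from Lemma \ref{mess} combined with the cohomological results of $\S$\ref{sBRST}. From Lemma \ref{mess} it is immediate that any $u \in U(\g,e)$ satisfies $d(\phi(u)) = 0$, since then all $\Pr([b_i - b_i^\ne, u])$ vanish in the formula $d(\phi(u)) = \sum_i f_i \phi(\Pr([b_i - b_i^\ne, u]))$. For the converse it suffices to prove that the linear map
$$
U(\widetilde{\p})^{\oplus r} \longrightarrow U(\widehat{\p})^1, \qquad (w_1,\dots,w_r) \longmapsto \sum_{i=1}^r f_i \phi(w_i)
$$
is injective; granting this, $d(\phi(u)) = 0$ forces each $\Pr([b_i - b_i^\ne, u]) = 0$, which is precisely the defining condition for $u \in U(\g,e)$. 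The right-handed statement for $\overline{U}(\g,e)$ is handled by a parallel argument applied to the second identity in Lemma \ref{mess}.

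The required injectivity is a PBW-independence check. Choose a homogeneous ordered basis of $\widehat{\p}$ with all generators of $\widetilde{\p}$ first, then the $f_i$, then the $b_i^\ch$. In cohomological degree $1$ decompose $U(\widehat{\p})^1 = V \oplus W$, where $V = U(\widetilde{\p}) \cdot \n^*$ is spanned by PBW monomials with exactly one $f$ and no $b^\ch$, and $W$ is spanned by those with at least one $b^\ch$. From $p \circ \phi = \id_{U(\widetilde{\p})}$ in Lemma \ref{tidy} we have $\phi(w_i) - w_i \in \n^* U(\widehat{\p})^0 \n^\ch$, and inspection of the PBW reduction of any product $f X b^\ch$ shows that every nonzero PBW term in such an element carries at least one $b^\ch$. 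Left-multiplication by $f_i$ then only inserts an additional $f$ and preserves the number of $b^\ch$'s, because in our chosen ordering $f$'s lie strictly to the left of $b^\ch$'s so no $f b^\ch$ reductions are triggered. Hence $f_i(\phi(w_i) - w_i) \in W$, and the $V$-component of $\sum_i f_i \phi(w_i)$ equals $\sum_i f_i w_i$, which by PBW vanishes precisely when each $w_i = 0$. The right-handed analogue uses the mirror ordering in which the $b_i^\ch$ precede the $f_i$, together with $\overline{p} \circ \overline{\phi} = \id$.

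For the decomposition $\ker d = \phi(U(\g,e)) \oplus \im\, d$, the inclusions $\phi(U(\g,e)) \subseteq \ker d$ (just proved) and $\im\, d \subseteq \ker d$ (from $d^2 = 0$) are already in hand; it remains to show that the restriction of the quotient $\ker d \twoheadrightarrow H^0(U(\widehat{\g}),d)$ to $\phi(U(\g,e))$ is an isomorphism. Consider the composite
$$
U(\g,e) \xrightarrow{\phi} \ker d \twoheadrightarrow H^0(U(\widehat{\g}),d) \xrightarrow{q} Q^{\n}.
$$
Using $p \circ \phi = \id$ together with the commutative diagram at the end of $\S$\ref{sBRST}, we compute $q(\phi(u)) = p(\phi(u))(1+I) = u(1+I)$, which by Theorem \ref{t1} is precisely the isomorphism $U(\g,e) \xrightarrow{\sim} Q^{\n}$. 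Since $q$ itself is the algebra isomorphism of \cite{DDDHK}, the intermediate map $\phi(U(\g,e)) \to H^0(U(\widehat{\g}),d)$ must be an isomorphism too: surjectivity gives $\ker d = \phi(U(\g,e)) + \im d$, while injectivity gives $\phi(U(\g,e)) \cap \im d = 0$. The assertion involving $\overline{\phi}(\overline{U}(\g,e))$ follows identically with $\overline{q}$ and $\overline{p}$ in place of $q$ and $p$. The main technical hurdle is the PBW independence in the second paragraph; everything else is routine bookkeeping on top of $\S$\ref{sBRST}.
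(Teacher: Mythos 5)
Your proof is correct and follows essentially the same route as the paper's: Lemma \ref{mess} gives the characterization of $U(\g,e)$ as $\{u : d(\phi(u))=0\}$, and the commutative diagram relating $p$, $q$ and the multiplication map $U(\widetilde{\p})\stackrel{\sim}{\rightarrow}Q$ yields the direct sum decomposition of $\ker d$. Your second paragraph supplies a careful PBW justification of the linear independence of the terms $f_i\phi(w_i)$, a point the paper dispatches with a one-line appeal to the injectivity of $\phi$.
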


\begin{proof}
As usual, we just prove the left-handed version.
By definition,
$$
U(\g,e) = \{u \in U(\widetilde{\p})\:|\:\Pr([b_i - b_i^\ne, u]) = 0
\text{ for each }i=1,\dots,r\}.
$$
The injectivity of $\phi$, implies that $\Pr([b_i - b_i^\ne,u]) = 0$
for each $i=1,\dots,r$ if and only if $\sum_{i=1}^r f_i
\phi(\Pr([b_i-b_i^\ne,u])) = 0$. By Lemma~\ref{mess}, this is
precisely the statement that $d(\phi(u)) = 0$. In particular, this
shows that $\phi(U(\g,e)) \subseteq \ker d$. Finally, recalling the
commutative diagram immediately before Lemma~\ref{fine}, we consider
the induced commutative diagram
$$
\begin{CD}
\phi(U(\g,e))&@>>>&H^\bullet(U(\widehat{\g}), d) \\
@Vp VV&&@VVq V\\
U(\g,e)&@>\sim >>& Q^{\n}
\end{CD}
$$
where the top map is the map $u \mapsto u + \ker d$. The left hand
map is an isomorphism by Lemma~\ref{tidy}. We have already observed,
the right hand map is an isomorphism by \cite{DDDHK}. Hence the top
map is an isomorphism too, showing that $\ker d = \phi(U(\g,e)) \oplus
\im \,d$.
\end{proof}

We are now in a position to  give the promised 
explicit isomorphism
between $U(\g,e)$ and $\overline{U}(\g,e)$.

\begin{Corollary}\label{rhs}
The restrictions of the automorphisms $S_{\pm\be}$ of
$U(\widetilde{\p})$ define mutually inverse algebra isomorphisms
$$
S_\be: \overline{U}(\g,e) \stackrel{\sim}{\rightarrow} U(\g,e),
\qquad
S_{-\be}: U(\g,e) \stackrel{\sim}{\rightarrow} \overline{U}(\g,e).
$$
\end{Corollary}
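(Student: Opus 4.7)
The plan is to derive the corollary essentially for free from Theorem~\ref{tidish} combined with Lemma~\ref{tidy}(2). Since $S_\be$ and $S_{-\be}$ are already mutually inverse algebra automorphisms of the whole algebra $U(\widetilde{\p})$, all that is needed is to check that $S_\be$ sends $\overline{U}(\g,e)$ into $U(\g,e)$ and that $S_{-\be}$ sends $U(\g,e)$ into $\overline{U}(\g,e)$.

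For the first inclusion, take $u \in \overline{U}(\g,e)$. By Theorem~\ref{tidish} this is precisely the condition $d(\overline{\phi}(u)) = 0$. By Lemma~\ref{tidy}(2) we have $\overline{\phi} = \phi \circ S_\be$, so this translates to $d(\phi(S_\be(u))) = 0$, which by Theorem~\ref{tidish} again says exactly that $S_\be(u) \in U(\g,e)$. Conversely, for $u \in U(\g,e)$ we have $d(\phi(u)) = 0$, i.e. $d(\phi(S_\be(S_{-\be}(u)))) = 0$, i.e. $d(\overline{\phi}(S_{-\be}(u))) = 0$, so $S_{-\be}(u) \in \overline{U}(\g,e)$.

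Since both $S_\be$ and $S_{-\be}$ are algebra homomorphisms and restrict to mutually inverse maps between $\overline{U}(\g,e)$ and $U(\g,e)$, they are the desired algebra isomorphisms. There is no real obstacle here: the work has already been done in establishing Lemma~\ref{tidy} (constructing $\phi$ and $\overline{\phi}$ and verifying $\overline{\phi}=\phi\circ S_\be$) and Theorem~\ref{tidish} (reinterpreting the $\Pr$ and $\overline{\Pr}$ conditions defining the two versions of the $W$-algebra as vanishing of $d\circ\phi$ and $d\circ\overline{\phi}$ respectively). With those in hand, the corollary is a one-line translation.
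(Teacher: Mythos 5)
Your proof is correct and follows exactly the paper's argument: both directions are obtained by combining the characterizations in Theorem~\ref{tidish} with the identity $\overline{\phi}=\phi\circ S_\be$ from Lemma~\ref{tidy}(2). Your treatment of the second inclusion is just a slightly more explicit version of the paper's ``similarly.''
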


\begin{proof}
By Theorem~\ref{tidish}, 
$
d(\phi(U(\g,e))) = \{0\} \text{ and }
d(\overline{\phi}(\overline{U}(\g,e))) = \{0\}.
$
Thus by Lemma~\ref{tidy},
$
d(\phi(S_\be(\overline{U}(\g,e)))) = \{0\}.
$
So by Theorem~\ref{tidish} again, we have 
$S_\be(\overline{U}(\g,e)) \subseteq U(\g,e)$. Similarly, we have
$S_{-\be}(U(\g,e)) \subseteq \overline{U}(\g,e)$.
\end{proof}

Using Corollary~\ref{rhs} it is an easy matter to 
translate statements
about $U(\g,e)$ into analogous statements about 
$\overline{U}(\g,e)$.
So for the remainder of the article we will just formulate
things in the left-handed case.

\section{Associated graded algebras}

Finite $W$-algebras possess two important filtrations. In this
section we review the fundamental
theorems describing the corresponding 
associated graded algebras; almost all of these results 
are due to Premet \cite{P1,P2}.

\subsection{Restricted roots}\label{sroots}
For $\al \in (\t^e)^*$, let $\g_\al = \bigoplus_{j \in \Z}
\g_\al(j)$
denote the $\al$-weight space
of $\g$ with respect to $\t^e$. So
$$
\g = \g_0 \oplus \bigoplus_{\al \in \Phi^e} \g_\al
$$
where $\Phi^e \subset (\t^e)^*$ denotes the set of non-zero weights
of $\t^e$ on $\g$. 
Similarly each of the spaces $\p, \m, \n, \h$ and $\k$ decomposes into
$\t^e$-weight spaces.
In the language of \cite[$\S$2]{BG}, $\Phi^e$ is a {\em 
restricted root system}.
It is not a root system in the usual sense;
for example, for $\al \in \Phi^e$
there may be multiples of
$\al$ other than $\pm \al$ that belong to $\Phi^e$.
There is an induced restricted root decomposition
$$
\g^e = \g^e_0 \oplus \bigoplus_{\al \in \Phi^e} \g^e_\al
$$
of the centralizer $\g^e$.
By \cite[Lemma 13]{BG}, $\Phi^e$ is also the set of non-zero weights of
$\t^e$ on $\g^e$, so all the subspaces
$\g^e_\al = \bigoplus_{j \geq 0} \g^e_\al(j)$ in this decomposition are non-zero.
Moreover $\g^e_\al(j)$ is of the same dimension as
$\g^e_{-\al}(j)$; if $j = 0$ this dimension is either $0$ or $1$,
but for $j > 0$ the space $\g^e_\al(j)$ can be bigger.

Recall that $\t^e$ is contained in $\h^e = \g^e(0)$.
We define a {\em dot action} of $\h^e$, hence also of $\t^e$, 
on the vector
space $\widehat\g$
by setting
$$
t \cdot x := [t,x],
\quad
t \cdot y^\ne := [t,y]^\ne,
\quad
t \cdot z^\ch := [t,z]^\ch,
\quad
\langle t \cdot f, z \rangle :=
-\langle f, [t, z] \rangle
$$
for $t \in \h^e, x \in \g, y \in \k, z \in \n$ and $f \in \n^*$.
Using the following lemma, it is routine to check that this 
extends uniquely to an action of $\t^e$ on
$U(\widehat{\g})$ by derivations. 
Moreover the dot action leaves all the subspaces $\widehat\p,
\widetilde\p$ and $\g^e$ invariant, so there are induced dot actions on
$U(\widehat\p)$, $U(\widetilde{\p})$ and $U(\g^e)$ too.

\begin{Lemma}\label{inv}
The adjoint action of $\h^e$ on $\k$ preserves the form
$\lan.|.\ran$.
\end{Lemma}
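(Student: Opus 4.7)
The plan is to verify directly that for $t \in \h^e$ and $x,y \in \k$ we have
$\lan [t,x]\,|\,y \ran + \lan x\,|\,[t,y]\ran = 0$,
which is the infinitesimal form of invariance. Note first that the action is well-defined on $\k$: since $t \in \g(0)$ and $x,y \in \g(-1)$, both $[t,x]$ and $[t,y]$ lie in $\g(-1) = \k$.

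Unpacking the definition $\lan u\,|\,v\ran = (e\,|\,[v,u])$, the identity to prove becomes
$(e\,|\,[y,[t,x]]) + (e\,|\,[[t,y],x]) = 0$.
I would then apply the Jacobi identity to collapse the two inner brackets: $[y,[t,x]] = [[y,t],x] + [t,[y,x]] = -[[t,y],x] + [t,[y,x]]$, so the sum simplifies to $(e\,|\,[t,[y,x]])$.

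Finally, invoking invariance of $(.|.)$ one rewrites $(e\,|\,[t,[y,x]]) = ([e,t]\,|\,[y,x])$, which vanishes because $t \in \g^e$ means $[t,e] = 0$. There is no real obstacle here; the proof is essentially a three-line chain of Jacobi plus $\ad$-invariance of the form, and the only thing to keep straight is the sign bookkeeping and the fact that $\h^e \subseteq \g^e$ is what kills the final expression.
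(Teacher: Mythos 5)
Your proof is correct and is essentially identical to the paper's: both unwind $\lan\cdot|\cdot\ran$ via $\chi$, combine the two terms with the Jacobi identity into $(e|[t,[y,x]])$, and kill it using invariance of $(.|.)$ together with $[t,e]=0$ for $t\in\h^e\subseteq\g^e$. No issues.
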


\begin{proof}
We calculate
\begin{align*}
\lan [t,x] | y \ran + \lan x | [t,y] \ran &= \chi([y,[t,x]] +
[[t,y],x]) =
\chi([t,[y,x]]) \\
&=
(e|[t,[y,x]])
= ([e,t]|[y,x]] = 0,
\end{align*}
for $t
\in \h^e$ and $x, y \in \k$.
\end{proof}

Let $\Z \Phi^e$ denote the $\Z$-submodule of $(\t^e)^*$ generated by $\Phi^e$.
Using the dot action of $\t^e$ we can also decompose all of the
universal enveloping
(super)algebras $U(\widehat{\g})$,
$U(\widehat{\p})$,
$U(\widetilde{\p})$
and $U(\g^e)$ into weight spaces.
For example:
$$
U(\widetilde{\p}) = \bigoplus_{\al \in \Z\Phi^e}
U(\widetilde{\p})_\al
$$
where $U(\widetilde{\p})_\al = \{u \in U(\widetilde{\p})\:|\:
t \cdot u = \al(t) u\text{ for all }t \in \t^e\}$.
Recall the $W$-algebra $U(\g,e)$ is a subalgebra
of $U(\widetilde{\p})$.

\begin{Lemma}\label{Stab}
The dot action of $\h^e$ on $U(\widetilde{\p})$ leaves 
$U(\g,e)$ invariant.
\end{Lemma}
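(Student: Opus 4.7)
The plan is to realize the dot action as a derivation of $U(\widetilde{\g})$ that is compatible with the projection $\Pr$, and then to conclude via the Leibniz rule using that $t \in \h^e$ centralizes $e$. Following the discussion immediately before Lemma~\ref{inv}, the dot action of $t \in \h^e$ extends uniquely to a derivation $\delta_t$ of $U(\widetilde{\g})$ with $\delta_t(x) = [t, x]$ for $x \in \g$ and $\delta_t(y^\ne) = [t, y]^\ne$ for $y \in \k$; and this in turn restricts to the dot action on the subalgebra $U(\widetilde{\p})$.

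The first step is to show that $\delta_t$ respects the direct sum decomposition $U(\widetilde{\g}) = U(\widetilde{\p}) \oplus J$ that defines $\Pr$. Stability of $U(\widetilde{\p})$ is immediate, since $\h^e$ preserves both $\p$ and $\k$. For the left ideal $J$, by the Leibniz rule it suffices to check that $\delta_t$ sends each generator $z - \chi(z) - z^\ne$ for $z \in \n$ into $J$. A direct computation gives
\[
\delta_t(z - \chi(z) - z^\ne) = [t, z] - [t, z]^\ne,
\]
where $[t, z] \in \n$ because $[\h, \n] \subseteq \n$. This equals the generator $[t,z] - \chi([t,z]) - [t,z]^\ne$ of $J$ provided $\chi([t,z]) = 0$. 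But $\chi([t,z]) = (e|[t,z]) = ([e,t]|z) = 0$ by invariance of the form and the fact that $t \in \g^e$. Hence $\delta_t$ preserves both summands and therefore commutes with $\Pr$.

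For the conclusion, take $u \in U(\g, e)$ and $x \in \n$. The Leibniz rule applied to the commutator gives
\[
[x - x^\ne, \delta_t(u)] = \delta_t([x - x^\ne, u]) - [\delta_t(x - x^\ne), u] = \delta_t([x - x^\ne, u]) - [[t,x] - [t,x]^\ne, u].
\]
Applying $\Pr$: the first term becomes $\delta_t(\Pr([x - x^\ne, u])) = \delta_t(0) = 0$ since $u \in U(\g, e)$; for the second term, $[t, x] \in \n$, so $\Pr([[t,x] - [t,x]^\ne, u]) = 0$ by a second application of the defining condition. Hence $\Pr([x - x^\ne, \delta_t(u)]) = 0$ for every $x \in \n$, giving $t \cdot u = \delta_t(u) \in U(\g, e)$ as required.

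The only real technical point is that $\delta_t$ preserves $J$; this ultimately reduces to the vanishing $\chi([t, \n]) = 0$, which is exactly where the hypothesis $t \in \g^e$ (rather than merely $t \in \h$) is essential. The rest of the argument is a standard commutator calculation.
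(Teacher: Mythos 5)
Your proof is correct and follows essentially the same route as the paper: both arguments hinge on the vanishing $\chi([t,\n])=0$ (since $t$ centralizes $e$) to show that the dot action preserves the left ideal $J$, and then apply the Leibniz rule to $[x-x^\ne,u]$ together with the fact that $[t,x]\in\n$. The only cosmetic difference is that you phrase the conclusion via commutation of $\delta_t$ with $\Pr$, while the paper works directly with membership in $J$; these are equivalent given the decomposition $U(\widetilde{\g})=U(\widetilde{\p})\oplus J$.
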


\begin{proof}
We first check that $t
\cdot J \subseteq J$ for $t \in \h^e$. For this we need to
show for any $x \in \n$ that $t \cdot (x - \chi(x) - x^\ne)$ belongs
to $J$. We have that
$
\chi([t,x]) = (e|[t,x]) = ([e,t]|x) = 0
$
as $t$ centralizes $e$. Hence,
$$
t \cdot (x - \chi(x) - x^\ne)
= [t,x] - [t,x]^\ne = [t,x] - \chi([t,x]) - [t,x]^\ne\in J.
$$
Now to prove the lemma, we take $u \in U(\g,e)$, so $[x-x^\ne,u] \in
J$ for all $x \in \n$. We need to show for $t \in \h^e$ that
$[x-x^\ne, t \cdot u] \in J$ for all $x \in \n$ too. Using the
Leibniz rule, we have that
$$
t \cdot [x-x^\ne,u] = [t \cdot (x-x^\ne), u] + [x-x^\ne, t \cdot u]
= [[t,x]-[t,x]^\ne,u] + [x-x^\ne,t \cdot u].
$$
As $[t,x] \in \n$, we know
already that $[[t,x]-[t,x]^\ne,u]\in J$. As $[x-x^\ne,u]\in J$, 
we deduce that $t \cdot
[x-x^\ne,u]\in J$ too. Hence $[x-x^\ne, t \cdot u] \in J$
as required.
\end{proof}

Hence the dot action of $\t^e$ induces a
 {\em restricted root space decomposition}
of $U(\g,e)$:
$$
U(\g,e) = \bigoplus_{\al \in \Z\Phi^e} U(\g,e)_\al.
$$
Actually, we can do rather better: 
the following theorem due to Premet
constructs a canonical embedding of $\t^e$ into $U(\g,e)$
such that the dot action of $\t^e$ on $U(\g,e)$ coincides with 
its adjoint action via this embedding.
This means that $\t^e$ also acts on any $U(\g,e)$-module;
we will use this later on to define weight space decompositions of
 $U(\g,e)$-modules
compatible with the above restricted root space decomposition.
To formulate the theorem following \cite[$\S$2.5]{P2},
let $z_1,\dots,z_{2s}$ be a symplectic basis for $\k$,
so that $\lan z_i | z_j^* \ran = \delta_{i,j}$ for all $1\leq
i,j\leq 2s$ where
$$
z_j^* := \left\{
\begin{array}{rl}
z_{j+s}&\text{for $j=1,\dots,s$,}\\
-z_{j-s}&\text{for $j=s+1,\dots,2s$.}
\end{array}\right.
$$

\begin{Theorem}\label{lif}
There is a $\t^e$-equivariant linear map $\theta: \g^e \hookrightarrow
U(\widetilde{\p})$ such that
$$
\theta(x) =
\left\{
\begin{array}{ll}
x + {\textstyle \frac{1}{2}} \sum_{i=1}^{2s} [x, z_i^*]^\ne z_i^\ne
&\text{if $x \in \h^e$,}\\
x&\text{otherwise,}
\end{array}\right.
$$
for each homogeneous element $x \in \g^e$. The map $\theta$ does not
depend on the choice of basis $z_1,\dots,z_{2s}$. Moreover:
\begin{itemize}
\item[(1)]
$[\theta(x), u] = x \cdot u$ for any $x \in \h^e$ and $u \in
U(\widetilde{\p})$;
\item[(2)] $[\theta(x), \theta(y)] = \theta([x,y])$
for each $x, y \in \g^e$;
\item[(3)] $\theta(x)$ belongs to $U(\g,e)$ 
for every $x \in \h^e$.
\end{itemize}
\end{Theorem}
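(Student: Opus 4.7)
The plan is to treat well-definedness, $\t^e$-equivariance, and the three numbered properties in sequence, reducing everything to standard facts about the Weyl algebra $U(\k^\ne)$ and the canonical symmetrized embedding $\mathfrak{sp}(\k)\hookrightarrow U(\k^\ne)$. I would first observe that by Lemma~\ref{inv} the operator $\ad x|_\k$ lies in $\mathfrak{sp}(\k)$ for every $x\in\h^e$, and that the correction $\frac{1}{2}\sum_{i=1}^{2s}[x,z_i^*]^\ne z_i^\ne$ is precisely the image of $\ad x|_\k$ under this canonical embedding. Independence from the choice of symplectic basis and $\t^e$-equivariance of $\theta$ then follow at once from the basis-free formulation of the Weyl-algebra embedding (the part $\theta(x)=x$ for $x$ in positive degree is trivially equivariant).

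Next I would establish (1) by verifying the derivation identity $[\theta(x),u]=x\cdot u$ on the algebra generators of $U(\widetilde{\p})$. For $u=v\in\p$ the correction term lies in $U(\k^\ne)$ which commutes with $v$ inside $U(\widetilde{\p})$, leaving $[x,v]=x\cdot v$. For $u=y^\ne$ with $y\in\k$ the element $x$ commutes with $y^\ne$, and a short computation in the Weyl algebra using the defining relation $[a^\ne,b^\ne]=\lan a|b\ran$, together with the completeness identity $y=\sum_j\lan z_j|y\ran z_j^*$ and the invariance from Lemma~\ref{inv}, produces exactly $[x,y]^\ne=x\cdot y^\ne$.

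Property (2) is then handled by case analysis on the grading $\g^e=\bigoplus_{j\geq 0}\g^e(j)$, assuming $x,y$ homogeneous. If at least one of $x,y$ has positive degree, then so does $[x,y]$; hence $\theta([x,y])=[x,y]$, and the identity either reduces to the honest Lie bracket in $\p$ (when both factors are in positive degree, so $\theta(x)=x$ and $\theta(y)=y$) or is a direct application of (1) (when one factor lies in $\h^e$). If both $x,y\in\h^e$, then (1) gives $[\theta(x),\theta(y)]=x\cdot\theta(y)$, and the required equality $x\cdot\theta(y)=\theta([x,y])$ amounts to the $\h^e$-equivariance of the correction map $x\mapsto\theta(x)-x$, which in turn is the statement that $\h^e\to\mathfrak{sp}(\k)\hookrightarrow U(\k^\ne)$ is a Lie algebra homomorphism.

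Finally, property (3) follows by direct computation of $[\theta(x),z-z^\ne]$ for $x\in\h^e$ and $z\in\n$. Since the correction term commutes with $z\in\g$, an analogous Weyl-algebra calculation, with the proviso that $z^\ne=0$ whenever $z\in\g(<-1)$, yields $[\theta(x),z-z^\ne]=[x,z]-[x,z]^\ne$. Because $x$ centralizes $e$, we have $\chi([x,z])=([e,x]|z)=0$, so $[x,z]-[x,z]^\ne=[x,z]-\chi([x,z])-[x,z]^\ne\in J$, whence $\Pr([z-z^\ne,\theta(x)])=0$. The main obstacle throughout is the precise identification of the correction with the symplectic embedding and the verification that this is a Lie homomorphism (equivalently, pinning down the Weyl-algebra commutators using Lemma~\ref{inv} and the completeness of the symplectic basis); once that is in hand the remainder is essentially bookkeeping.
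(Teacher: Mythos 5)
Your proposal is correct, and its skeleton matches the paper's proof: verify (1) on the generators $\p$ and $\k^\ne$ of $U(\widetilde{\p})$, split (2) into the positive-degree cases (where it is trivial or follows from (1)) and the case $x,y\in\h^e$ (where one uses $[\theta(x),\theta(y)]=x\cdot\theta(y)$), and prove (3) by showing $[y-y^\ne,\theta(x)]\in J$ using $\chi([x,y])=([e,x]|y)=0$. The one place you genuinely repackage the argument is in handling basis-independence, $\t^e$-equivariance, and the $\h^e$-case of (2): you identify the correction term $x\mapsto\frac{1}{2}\sum_i[x,z_i^*]^\ne z_i^\ne$ with the image of $\ad x|_\k\in\mathfrak{sp}(\k)$ under the canonical quadratic embedding $\mathfrak{sp}(\k)\hookrightarrow U(\k^\ne)$ and then cite that this embedding is a Lie algebra homomorphism, whereas the paper carries out the corresponding symplectic-basis manipulations by hand. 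This is a legitimate and more conceptual route; the only point you should make explicit is that your formula really is the \emph{symmetrized} quadratic, i.e.\ that the Weyl-ordering constant $\frac{1}{4}\sum_i\lan[x,z_i^*]|z_i\ran$ vanishes (it does, by Lemma~\ref{inv} and the relation $(z_i^*)^*=-z_i$), since otherwise ``Lie homomorphism'' could fail by a scalar and part (2) with it. With that checked, your reduction buys a cleaner proof of (2) and makes basis-independence and equivariance automatic, at the cost of importing a standard fact that the paper effectively reproves in situ.
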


\begin{proof}
This is proved in \cite[$\S$2.5]{P2} but we repeat the details
since our setup is slightly different.
It is routine
that 
$\theta$ is independent of the
choice of the symplectic basis.
Moreover, as the symplectic form on
$\k$ is $\t^e$-invariant, 
we may choose the
symplectic basis to be a $\t^e$-weight basis with the weight of
$z_i$ being the negative of the weight of $z_i^*$ for each $i$. It
is then clear that $\theta$ is $\t^e$-equivariant.

We next check
(1) for a fixed $x \in \h^e$. Both the
maps $u \mapsto [\theta(x), u]$ and $u \mapsto x \cdot u$ are
derivations, so it suffices to check that $[\theta(x), y] = x \cdot
y$ for each $y \in \p$ and that $[\theta(x), (z_j^*)^\ne] = x \cdot
(z_j^*)^\ne$ for each $j=1,\dots,2s$. The first of these statements
is obvious, since both $[\theta(x), y]$ and $x \cdot y$ equal
$[x,y]$. For the second, we calculate using Lemma~\ref{inv}:
\begin{align*}
[\theta(x), (z_j^*)^\ne] &= \textstyle \frac{1}{2} \sum_i \lan
[x,z_i^*] | z_j^* \ran z_i^\ne +
\frac{1}{2} [x,z_j^*]^\ne\\
&= \textstyle \frac{1}{2} \sum_i \lan [x,z_j^*] | z_i^* \ran z_i^\ne
+ \frac{1}{2} [x,z_j^*]^\ne = [x,z_j^*]^\ne.
\end{align*}

For (2), take homogeneous $x, y \in \g^e$. If
$x$ or $y$
belongs to $\g^e(j)$ for some $j > 0$ then (2) is obvious 
so assume $x,y\in \h^e$. Then using (1) we get 
$$
[\theta(x), \theta(y)] = x \cdot \theta(y)
=
[x,y]
 + {\textstyle \frac{1}{2}} {\textstyle\sum_i} [x,[y, z_i^*]]^\ne z_i^\ne
 + {\textstyle \frac{1}{2}} {\textstyle\sum_i} [y, z_i^*]^\ne [x,z_i]^\ne.
$$
Now we simplify the last term:
\begin{align*}
\textstyle \sum_i [y,z_i^*]^\ne [x,z_i]^\ne
&=
-\textstyle \sum_i [y,z_i]^\ne [x,z_i^*]^\ne\\
&= -\textstyle \sum_{i,j} \lan [x,z_i^*] | z_j^* \ran
[y,z_i]^\ne z_j^\ne\\
&= -\textstyle \sum_{i,j} \lan [x,z_j^*] | z_i^* \ran
[y,z_i]^\ne z_j^\ne\\
&=
-\textstyle \sum_{i}
[y,[x,z_i^*]]^\ne z_i^\ne\\
&=
\textstyle\sum_{i}
[[x,y],z_i^*]^\ne z_i^\ne
- \sum_i [x,[y,z_i^*]]^\ne z_i^\ne.
\end{align*}
Thus 
$$
[\theta(x),\theta(y)]  =
[x,y] + \textstyle\frac{1}{2} \textstyle\sum_i [[x,y],z_i^*]^\ne z_i^\ne
=
\theta([x,y]).
$$

To check (3), take $x \in \h^e$.
We need to show that $[y-y^\ne,\theta(x)] \in J$ for all $y \in \n$. If $y \in 
\m$, 
then $[y-y^\ne,\theta(x)] = [y,x] \in J$, as $\chi([y,x]) = 0$ using the fact that $x \in \g^e$. If instead $y \in \k$, then using
(1) we have that
\begin{align*}
[y-y^\ne, \theta(x)] &=
[y,x] + [\theta(x), y^\ne]= [y,x] + x \cdot y^\ne \\
&= [y,x] + [x,y]^\ne = [y,x] - \chi([y,x]) - [y,x]^\ne
\end{align*}
which does belong to $J$.
\end{proof}

\begin{Remark}\label{thetam}
One can modify the map $\theta$ in
Theorem~\ref{lif} by composing 
with an automorphism
of $U(\widetilde\p)$ of the form
$x \mapsto x + \eta(x), y^\ne \mapsto y^\ne$ 
for some character $\eta$ of $\p$.
Providing the character $\delta$
in $\S$\ref{sCartan} below is replaced 
by $\delta-\eta$,
all subsequent results remain true exactly as formulated for $\theta$ modified in this way.
This extra degree of freedom will be useful when we discuss type A
in $\S$\ref{sA} below.
\end{Remark}

\subsection{The Kazhdan filtration}\label{sKazhdan}
Now we introduce the first important filtration,
working to start with
in terms of the Whittaker model realization 
$Q^{\n}$ of the $W$-algebra
following \cite{GG}. The {\em Kazhdan filtration}
$$
\cdots \subseteq \F_i U(\g) \subseteq \F_{i+1}U(\g) \subseteq\cdots
$$
on $U(\g)$ is defined by declaring that $x \in \g(j)$ is of
{\em Kazhdan degree} $(j+2)$.
So $\F_i U(\g)$ is the
span of the monomials $x_1 \cdots x_n$ for 
$n \geq 0$ and 
$x_1 \in \g(j_1)$, $\dots$, $x_n \in \g(j_n)$
such that $(j_1+2) + \cdots + (j_n+2) \leq i$. The associated graded
algebra $\gr U(\g)$ is the symmetric algebra $S(\g)$ viewed as a
graded algebra via the {\em Kazhdan grading} on $\g$ in which $x \in
\g(j)$ is of degree $(j+2)$.

The Kazhdan filtration on $U(\g)$ induces a filtration on the left
ideal $I$ and on the quotient $Q = U(\g) / I$ such that $\gr Q =
S(\g) / \gr I$. 
We use the bilinear form $(.|.)$ to 
identify $S(\g)$ with the algebra $\C[\g]$ of regular
functions
on the affine variety $\g$. Then $\gr I$ is the ideal generated by
the functions 
$\{x - \chi(x)\:|\:x \in \m\}$, i.e. the 
ideal of all functions in $\C[\g]$ vanishing on the closed
subvariety $e + \m^\perp$ of $\g$. In this way, we have identified
$$
\gr Q = \C[e + \m^\perp].
$$

Let $N$ be the
closed subgroup of $G$
corresponding to the subalgebra
$\n$. The adjoint action of $N$ on $\g$ leaves $e + \m^\perp$
invariant, so we get induced an action of $N$ on
$\C[e + \m^\perp]$ by automorphisms; the resulting action of
$\n$ on $\C[e + \m^\perp]$ by derivations coincides under the above
identification with the action of $\n$ on $\gr Q$ induced by the
adjoint action of $\n$ on $Q$ itself.
By \cite[Lemma 2.1]{GG}, the action of
$N$ on $e + \m^\perp$ is regular and
the {\em Slodowy
slice} $$ e + \g^f \subseteq e + \m^\perp
$$
gives a set of representatives for the orbits of $N$ on
$e+\m^\perp$. 
So restriction of
functions defines an isomorphism from the invariant
subalgebra $\C[e+\m^\perp]^N$ onto
$\C[e+\g^f]$.  
Finally, the Kazhdan filtration on
$Q$ induces an algebra filtration on $Q^{\n}$ so that 
$\gr (Q^{\n})$ is identified with a graded
subalgebra of $\gr Q = \C[e+\m^\perp]$. Now 
the {\em PBW theorem}\, for $Q^{\n}$ \cite[Theorem 4.1]{GG} gives:
$$
\gr (Q^{\n}) = \C[e+\m^\perp]^N \cong \C[e+\g^f].
$$

The next goal is to
reformulate this PBW theorem for $U(\g,e)$ directly.
To do
this, extend the Kazhdan filtration on $U(\g)$ to
$U(\widetilde{\g})$ by declaring that elements of $\k^\ne$ are of
degree $1$. Setting $\F_i U(\widetilde{\p}) := U(\widetilde{\p})
\cap \F_i U(\widetilde{\g})$, 
we get an induced Kazhdan filtration on the subalgebra
$U(\widetilde{\p})$. We can obviously identify $\gr
U(\widetilde{\g})$ 
and $\gr U(\widetilde{\p})$
with $S(\widetilde{\g})$ and $S(\widetilde{\p})$, 
both graded via the
analogous {\em Kazhdan grading} in which $x \in \g(j)$ 
is of
Kazhdan degree $j+2$ and $y^\ne \in \k^\ne$ is of Kazhdan degree $1$. 
The Kazhdan grading on 
$\widetilde{\p}$ only involves positive degrees,
so the Kazhdan filtration on $U(\widetilde{\p})$ is
{strictly positive} in the sense that 
$\F_0 U(\widetilde{\p}) = \C$ and
$\F_i U(\widetilde{\p}) = 0$ for $i < 0$.
We get an induced strictly positive filtration
$$
\F_0 U(\g,e) \subseteq \F_1 U(\g,e) \subseteq \cdots
$$
on $U(\g,e)$ such that
$\gr U(\g,e)$ is canonically identified with a graded subalgebra of
$\gr U(\widetilde{\p}) = S(\widetilde{\p})$.
The point now is that there is a
commutative diagram:
$$
\xymatrix{& \gr U(\widetilde{\g}) =
 S(\widetilde{\g}) \ar@{>>}[d]_{\pr} \ar[dr] \\
\gr U(\g,e) =  S(\widetilde{\p})^\n \ar@{^{(}->}[r] \ar[dr]_-{\sim} &
\gr U(\widetilde{\p}) = S(\widetilde{\p}) \ar[r]^-{\sim}
\ar@{>>}[d]_{\zeta} &
\C[e+\m^\perp] \ar@{>>}[d]^{\res} & \C[e+\m^\perp]^N \ar@{_{(}->}[l] \ar[dl]^-{\sim} \\
 & S(\g^e) \ar[r]^-{\sim} & \C[e+\g^f] \\ }
$$

There are several things still to be explained in this diagram.
The algebra homomorphisms $S(\widetilde{\g}) \twoheadrightarrow \C[e+\m^\perp]$ and
$S(\widetilde{\p}) \stackrel{\sim}{\rightarrow} \C[e+\m^\perp]$ are
induced by the multiplication maps
$U(\widetilde{\g}) \twoheadrightarrow Q, u \mapsto u (1+I)$ and
$U(\widetilde{\p}) \stackrel{\sim}{\rightarrow} Q, u \mapsto u(1+I)$, 
on passing to the associated graded objects.
Explicitly, they send $x \in \g$ or
$\p$ to the function $z \mapsto (x|z)$ and $y^\ne \in \k^\ne$ to the
function $z \mapsto (y|z)$.

The homomorphism $\pr:S(\widetilde{\g}) \twoheadrightarrow S(\widetilde{\p})$
is
induced by $\Pr:U(\widetilde{\g})
\twoheadrightarrow U(\widetilde{\p})$ on passing to the associated
graded algebras. Explicitly, $\pr$ is the identity on elements of
$\widetilde{\p}$ and maps $x \in \n$ to $\chi(x) + x^\ne$. The top
triangle in the diagram commutes even 
before passing to the associated graded objects, as we observed that
$\Pr(u) (1+I) = u (1+I)$ for $u \in U(\widetilde{\g})$
in the proof of Theorem~\ref{t1}.

The twisted adjoint action of $\n$ on $U(\widetilde{\g})$ induces a
graded action of $\n$ on $S(\widetilde{\g})$ by derivations, such
that
 $x \in \n$ maps
$y \in \widetilde{\g}$ to $[x-x^\ne,y]$. This action factors through
the map $\pr$ to induce an action of $\n$ on $S(\widetilde{\p})$ by
derivations, such that
 $x \in \n$ maps
$y \in \widetilde{\p}$ to $\pr([x-x^\ne,y])$. We let
$S(\widetilde{\p})^{\n}$ denote the invariant subalgebra for this
action.

By Lemma~\ref{nat}, the map
$S(\widetilde{\g}) \twoheadrightarrow \C[e+\m^\perp]$ intertwines
the twisted adjoint action of $\n$ on $S(\widetilde{\g})$ with the
action of $\n$ on $\C[e+\m^\perp]$ derived from the action of the
group $N$. Hence the map $S(\widetilde{\p})
\stackrel{\sim}{\rightarrow} \C[e+\m^\perp]$ restricts to an
isomorphism $S(\widetilde{\p})^{\n}
\stackrel{\sim}{\rightarrow}\C[e+\m^\perp]^N$.
By Theorem~\ref{t1} and the PBW theorem for $Q^{\n}$, the
isomorphism $S(\widetilde{\p}) \stackrel{\sim}{\rightarrow}
\C[e+\m^\perp]$ maps $\gr U(\g,e)$ isomorphically onto
$\gr (Q^{\n}) = \C[e+\m^\perp]^N$. This shows that $\gr
U(\g,e) = S(\widetilde{\p})^{\n}$.

Let $\zeta:S(\widetilde{\p}) \twoheadrightarrow S(\g^e)$ be the
homomorphism induced by the projection $\widetilde{\p}
\twoheadrightarrow \g^e$ along the
decomposition from Lemma~\ref{wpd}.
We have that
$$
\qquad([f,x]|z) = (x|[z,f]) = (x|[e,f]) = (x|h) = 0
$$
for all $x \in \bigoplus_{j \geq 1} \g(j)$ and $z \in e + \g^f$.
So the image of any element of
$\ker \zeta$ under the isomorphism $S(\widetilde{\p})
\stackrel{\sim}{\rightarrow} \C[e+\m^\perp]$ annihilates 
$e + \g^f$. Hence there is an induced homomorphism
$S(\g^e) \rightarrow \C[e+\g^f]$ making the bottom square in the
diagram commute. This
homomorphism maps $x \in \g^e$
to the function $z \mapsto (x|z)$, from which it is easy to check that it is
actually an isomorphism.

Finally, we have already noted that the map $\res$ sends
$\C[e+\m^\perp]^N$ isomorphically onto the coordinate algebra
$\C[e+\g^f]$ of the Slodowy slice.
So the restriction of
$\zeta$ gives an isomorphism
$S(\widetilde{\p})^{\n}\stackrel{\sim}{\rightarrow}S(\g^e)$.
This completes the justification of the above diagram,
and we have now derived the following convenient
reformulation of \cite[Theorem 4.1]{GG}:

\begin{Lemma}\label{pbwnew}
We have that $\gr U(\g,e) = S(\widetilde\p)^\n$.
Moreover, the restriction of $\zeta$
is an isomorphism 
of graded algebras
$$
\zeta: S(\widetilde\p)^{\n}
\stackrel{\sim}{\rightarrow} S(\g^e),
$$ 
where the grading on $S(\g^e)$ is induced by the Kazhdan grading on $\g^e$. 
\end{Lemma}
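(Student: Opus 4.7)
The plan is to assemble the pieces already laid out in the commutative diagram immediately preceding the lemma statement; the two assertions are essentially corollaries of that diagram once its commutativity is verified.

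First I would establish $\gr U(\g,e) = S(\widetilde\p)^\n$. By Theorem~\ref{t1}, the multiplication map $u \mapsto u(1+I)$ identifies $U(\g,e)$ with $Q^\n$ as filtered algebras for the Kazhdan filtration, so passing to associated graded gives $\gr U(\g,e) \cong \gr Q^\n$. The PBW theorem for $Q^\n$ from \cite[Theorem 4.1]{GG} says $\gr Q^\n = \C[e+\m^\perp]^N$. On the other hand, on the associated graded level the map $U(\widetilde\p) \stackrel{\sim}{\to} Q$ induces an isomorphism of graded algebras $S(\widetilde\p) \stackrel{\sim}{\to} \C[e+\m^\perp]$, and by Lemma~\ref{nat} (applied on the associated graded level) this intertwines the $\n$-action on $S(\widetilde\p)$ coming from the twisted adjoint action with the action on $\C[e+\m^\perp]$ derived from $N$. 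Taking invariants yields $S(\widetilde\p)^\n \stackrel{\sim}{\to} \C[e+\m^\perp]^N$, so $\gr U(\g,e) = S(\widetilde\p)^\n$ inside $S(\widetilde\p)$.

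For the second assertion I would verify that the bottom square of the diagram commutes, i.e.\ that every element of $\ker\zeta$ is sent by the isomorphism $S(\widetilde\p) \stackrel{\sim}{\to} \C[e+\m^\perp]$ to a function vanishing on $e+\g^f$. By Lemma~\ref{wpd}, $\ker\zeta$ is generated by the subspaces $[f,\g(j)]$ for $j\ge 2$ and by $\k^\ne$. For a generator of the form $[f,x]$ with $x\in\g(j)$, $j\ge 2$, and $z = e+w$ with $w\in\g^f$, the computation $([f,x]\,|\,z) = (x\,|\,[z,f]) = (x\,|\,[e,f]) = (x\,|\,h) = 0$ works because the invariant form pairs $\g(j)$ with $\g(-j)$, while $h\in\g(0)$. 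For a generator $y^\ne$ with $y\in\k = \g(-1)$, its image is the function $z\mapsto (y|z)$; since $\g^f \subseteq \bigoplus_{j\le 0}\g(j)$ has no component in $\g(1)$ and $(y|e)=0$ by degree, this vanishes on $e+\g^f$.

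Given commutativity, the induced map $S(\g^e) \to \C[e+\g^f]$, which sends $x\in\g^e$ to the function $z\mapsto (x|z)$, is an isomorphism because $(.\,|\,.)$ restricts to a perfect pairing between $\g^e$ and $\g^f$ (a standard consequence of the $\mathfrak{sl}_2$-theory); combined with the restriction isomorphism $\res:\C[e+\m^\perp]^N \stackrel{\sim}{\to} \C[e+\g^f]$ of \cite[Lemma 2.1]{GG}, it follows that $\zeta|_{S(\widetilde\p)^\n}$ factors as a composition of isomorphisms, hence is itself an isomorphism of graded algebras onto $S(\g^e)$. The main technical point of the whole argument is the verification of the vanishing on $e+\g^f$ of the generators of $\ker\zeta$; once that is in place, everything else is bookkeeping via the diagram.
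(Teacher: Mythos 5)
Your argument is correct and follows essentially the same route as the paper: it assembles the same commutative diagram, invokes the Gan--Ginzburg PBW theorem for $Q^{\n}$ together with Lemma~\ref{nat} for the first assertion, and verifies commutativity of the bottom square via the computation $([f,x]|z)=(x|[e,f])=(x|h)=0$ for the second. The only cosmetic difference is that you treat the $\k^\ne$ generators of $\ker\zeta$ separately using $\g^f\subseteq\bigoplus_{j\leq 0}\g(j)$, whereas the paper's single computation with $x\in\bigoplus_{j\geq 1}\g(j)$ already covers them because $\ad f:\g(1)\rightarrow\g(-1)$ is surjective.
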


The dot action of $\t^e$ on $\widetilde\p$
extends to an action
on $S(\widetilde\p)$ by derivations, which coincides with 
the action induced by the 
dot action of $\t^e$ on $U(\widetilde\p)$ on passing to the associated graded
algebra. Hence the $\t^e$-weight space decomposition
$$
S(\widetilde{\p}) = \bigoplus_{\al \in
\Z\Phi^e} S(\widetilde{\p})_\al
$$
satisfies
$\gr (U(\widetilde{\p})_\al) = S(\widetilde{\p})_\al$. In
view of Lemma~\ref{Stab}, $U(\g,e)$ 
is a $\t^e$-submodule of $U(\widetilde{\p})$, hence $\gr U(\g,e)= 
S(\widetilde{\p})^{\n}$ is a $\t^e$-submodule of
$S(\widetilde{\p})$
and the induced decomposition
$$
\gr U(\g,e) = \bigoplus_{\al
\in \Z\Phi^e} S(\widetilde{\p})^{\n}_\al
$$
satisfies $\gr (U(\g,e)_\al)=
S(\widetilde{\p})^{\n}_\al$.
Now we can deduce the following {\em PBW theorem} for $U(\g,e)$,
which is essentially
\cite[Theorem 4.6]{P1}. We remark that the original proof in \cite{P1} involved lifting from characteristic $p$,
whereas we are deducing it ultimately from \cite[Theorem 4.1]{GG}.

\begin{Theorem}\label{Th}
There exists a (non-unique) $\t^e$-equivariant linear map
$$
\Theta:\g^e \hookrightarrow U(\g,e)
$$
such that 
$\Theta(x) \in\F_{j+2} U(\g,e)$ and
$\zeta(\gr_{j+2}
\Theta(x)) = x$ for each $x \in \g^e(j)$, and
$\Theta(t) = \theta(t)$ 
for each $t \in \h^e$.
Moreover, 
if $x_1,\dots,x_t$ is a homogeneous basis of
$\g^e$ with $x_i \in \g^e(n_i)$ then
the monomials 
$$
\{\Theta(x_{i_1})\dots\Theta(x_{i_k})\:|\:k \geq 0, 
1\leq i_1\leq\dots\leq i_k\leq t,
n_{i_1}+\cdots+n_{i_k}+2k \leq j
\}
$$
form a basis for $\F_j U(\g,e)\:(j \geq 0)$.
\end{Theorem}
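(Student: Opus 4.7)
My plan is to construct $\Theta$ by lifting through the associated graded isomorphism $\zeta:S(\widetilde\p)^\n \stackrel{\sim}{\to} S(\g^e)$ from Lemma~\ref{pbwnew}, exploiting the fact that $\zeta$, the Kazhdan filtration, and the passage to $\gr$ are all $\t^e$-equivariant.

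First I would dispose of the prescribed values on $\h^e$. For $t \in \h^e = \g^e(0)$, Theorem~\ref{lif}(3) gives $\theta(t) \in U(\g,e)$. I need to check $\theta(t) \in \F_2 U(\g,e)$ with $\zeta(\gr_2 \theta(t)) = t$. Since $t$ has Kazhdan degree $2$ and each summand $[t,z_i^*]^\ne z_i^\ne$ lies in $\k^\ne\cdot\k^\ne$ (Kazhdan degree $2$), the filtration claim is immediate. For the symbol, recall $\zeta$ is induced by the projection in Lemma~\ref{wpd}: $\widetilde\p = \g^e \oplus \bigoplus_{j\geq 2}[f,\g(j)] \oplus \k^\ne$. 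In $S(\widetilde\p)$, the element $\gr_2\theta(t)$ equals $t + \tfrac{1}{2}\sum_i [t,z_i^*]^\ne z_i^\ne$; the second summand lies in $S^2(\k^\ne) \subseteq \ker\zeta$, so $\zeta(\gr_2\theta(t)) = t$, exactly as required. Set $\Theta(t) := \theta(t)$ on $\h^e$.

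Next I would extend $\Theta$ to all of $\g^e$. Fix a $\t^e$-equivariant graded complement $\bigoplus_{j > 0}\g^e(j)$ of $\h^e$ in $\g^e$, and decompose each $\g^e(j)$ into $\t^e$-weight spaces $\g^e(j)_\al$. The Kazhdan filtration, the projection to $\gr$, and $\zeta$ are all $\t^e$-equivariant, so each map
\[
\gr_{j+2}:\F_{j+2}U(\g,e)_\al\big/\F_{j+1}U(\g,e)_\al \stackrel{\sim}{\longrightarrow} S(\widetilde\p)^\n_{j+2,\,\al}\stackrel{\zeta}{\longrightarrow} S(\g^e)_{j+2,\,\al}
\]
is an isomorphism of finite-dimensional spaces. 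For $x \in \g^e(j)_\al$ with $j > 0$, the element $x \in S(\g^e)_{j+2,\al}$ therefore has a preimage in $\F_{j+2}U(\g,e)_\al$; choose such a preimage for each basis vector in a weight basis of $\bigoplus_{j > 0}\g^e(j)$ and extend by linearity. The resulting $\Theta:\g^e \hookrightarrow U(\g,e)$ is $\t^e$-equivariant by construction, agrees with $\theta$ on $\h^e$, and satisfies $\zeta(\gr_{j+2}\Theta(x)) = x$ for $x \in \g^e(j)$.

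Finally for the basis statement, I would apply a standard filtered-to-graded argument. Since $\zeta$ is an isomorphism of graded algebras and $\{x_1,\dots,x_t\}$ generates $S(\g^e)$ freely (commutatively) with $x_i$ in Kazhdan degree $n_i+2$, the ordered monomials $\zeta^{-1}(x_{i_1})\cdots \zeta^{-1}(x_{i_k})$ form a homogeneous basis of $S(\widetilde\p)^\n = \gr U(\g,e)$, and those with $n_{i_1}+\cdots+n_{i_k}+2k \leq j$ form a basis of the degree-$\leq j$ part. The monomial $\Theta(x_{i_1})\cdots\Theta(x_{i_k})$ lies in $\F_{n_{i_1}+\cdots+n_{i_k}+2k}\,U(\g,e)$, and its leading symbol in $\gr$ is precisely $\zeta^{-1}(x_{i_1})\cdots\zeta^{-1}(x_{i_k})$ (multiplication in $\gr$ is commutative, so the order is irrelevant in $\gr$). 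A routine induction on the filtration degree, using the strict positivity of the Kazhdan filtration as the base case, upgrades these ordered monomials from a basis of $\gr U(\g,e)$ to a basis of each $\F_j U(\g,e)$.

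The main technical wrinkle is arranging the lifts to be $\t^e$-equivariant while simultaneously matching $\theta$ on $\h^e$; this is handled cleanly by choosing a $\t^e$-stable complement of $\h^e$ in $\g^e$ and making the choices weight-space by weight-space, together with the verification above that $\theta(t)$ really does serve as a valid Kazhdan-degree-$2$ lift of $t \in \h^e$.
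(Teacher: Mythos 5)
Your proposal is correct and follows essentially the same route as the paper: lift through the $\t^e$-equivariant graded isomorphism $\zeta$ of Lemma~\ref{pbwnew} weight space by weight space, take $\Theta=\theta$ on $\h^e$ after checking $\zeta(\gr_2\theta(t))=t$ (which holds since $\zeta$ kills $\k^\ne$), and deduce the PBW basis by the standard filtered-to-graded argument. The only cosmetic difference is that the paper lifts a fixed $\t^e$-weight basis vector by vector rather than organizing the lifts by weight spaces of a graded complement.
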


\begin{proof}
Let $x_1,\dots,x_t$ be a
basis for $\g^e$ with
$x_i \in \g(n_i)$ of $\t^e$-weight $\gamma_i$. 
As the decomposition from Lemma~\ref{wpd}
is a direct sum of $\t^e$-modules, the isomorphism
$\zeta:S(\widetilde{\p})^\n \stackrel{\sim}{\rightarrow} S(\g^e)$ 
from Lemma~\ref{pbwnew}
is a $\t^e$-equivariant isomorphism of Kazhdan graded vector 
spaces. 
For each $i$, 
let $\hat{x}_i \in S(\widetilde{\p})_{\gamma_i}^{\n}$ be the
unique element with $\zeta(\hat{x}_i) = x_i$. 
So 
$\hat{x}_i$ belongs to $\gr_{n_i+2}
(U(\g,e)_{\gamma_i})$. Hence there is
a lift $\Theta(x_i) \in \F_{n_i+2} U(\g,e)_{\gamma_i}$ with
$\gr_{n_i+2} \Theta(x_i) = \hat{x}_i$, i.e.\ $\zeta(\gr_{n_i+2}
\Theta(x_i)) = x_i$. 
Moreover if $n_i = 0$ then
by Lemma~\ref{lif}(3) we know that
$\theta(x_i) \in \F_2 U(\g,e)_{\gamma_i}$
and $\zeta(\gr_2 \theta(x_i)) = x_i$, so
we can choose $\Theta(x_i)$ to be $\theta(x_i)$.
Now extend by linearity to obtain the desired map $\Theta$.
The final statement then follows immediately from Lemma~\ref{pbwnew}.
\end{proof}

\subsection{The good filtration}\label{sGood}
Now we turn our attention to the second
filtration on $U(\g,e)$, which we refer to as the {\em good
filtration}. To define it, the good grading on $\p$ induces a
grading on $U(\p)$. We extend this to the {\em good grading}
$$
U(\widetilde{\p}) = \bigoplus_{j \geq 0} U(\widetilde{\p})(j)
$$
on $U(\widetilde{\p})$
by
declaring that elements of $\k^\ne$ are of degree $0$. The
subalgebra $U(\g,e)$ is {\em not} a graded
subalgebra in general, but the good grading at least induces
the {\em good filtration} 
$$
\F'_0 U(\g,e) \subseteq \F'_1 U(\g,e) \subseteq \cdots
$$
on $U(\g,e)$,
where $\F'_j U(\g,e) = U(\g,e) \cap \bigoplus_{i \leq j} U(\widetilde{\p})(i)$. 
The associated graded algebra 
$\gr' U(\g,e)$ is then canonically identified with a graded
subalgebra of $U(\widetilde{\p})$.
Our goal is to identify this associated graded algebra with 
$U(\g^e)$, a result which is a slight variation on 
\cite[Proposition 2.1]{P2}.

\begin{Lemma}\label{beep}
Let $\theta$ and $\Theta$ be as in Theorems~\ref{lif} and
\ref{Th}.
For $x \in \g^e(j)$ we have that
$\Theta(x) \in \F'_j U(\g,e)$ and
$\gr'_j \Theta(x) = \theta(x)$.
\end{Lemma}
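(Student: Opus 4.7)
To show $\Theta(x) \in \F'_j U(\g, e)$, I would prove the stronger inclusion $\F_{j+2} U(\widetilde{\p}) \subseteq \bigoplus_{i \leq j} U(\widetilde{\p})(i)$. A PBW monomial $y_1 \cdots y_s u_1^\ne \cdots u_k^\ne$ of $U(\widetilde{\p})$ with $y_i \in \g(n_i)$ and $u_l \in \k$ has Kazhdan degree $\sum(n_i+2)+k$ and good degree $\sum n_i$, whose difference $2s+k$ is at least $2$ whenever the monomial has Kazhdan degree $\geq 2$. Since $\Theta(x) \in \F_{j+2} U(\widetilde{\p})$, every PBW monomial in its expansion has good degree at most $j$.

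For the equality $\gr'_j \Theta(x) = \theta(x)$, the case $j = 0$ is immediate from the choice $\Theta(x) = \theta(x)$ made in the proof of Theorem~\ref{Th}, since $\theta(x) \in U(\widetilde{\p})(0)$ by its explicit formula. For $j \geq 1$, the same monomial count gives $\F_{j+1} U(\widetilde{\p}) \subseteq \bigoplus_{i \leq j-1} U(\widetilde{\p})(i)$, so $\gr'_j \Theta(x)$ depends only on the Kazhdan-top element $\hat x := \gr_{j+2} \Theta(x) \in S(\widetilde{\p})^\n$. Moreover, among Kazhdan-degree-$(j+2)$ monomials of $S(\widetilde{\p})$, only those with $s = 1, k = 0$ attain good degree exactly $j$, so the good-degree-$j$ part of $\hat x$ is a single element $y_0 \in \g(j) \subseteq \widetilde{\p}$.

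To identify $y_0$, I would appeal to the Whittaker-model realization of $\hat x$ via the isomorphism $\phi_1 : S(\widetilde{\p})^\n \stackrel{\sim}{\rightarrow} \C[e+\m^\perp]^N$ from Lemma~\ref{pbwnew}. The function $\phi_1(\hat x)$ is the unique $N$-invariant extension of $(x|\cdot)|_{e+\g^f}$, given explicitly by $\phi_1(\hat x)(z) = (x | v(z))$ where $v \colon e+\m^\perp \to \g^f$ sends $z$ to the $\g^f$-part of its Slodowy-slice representative. Using the decomposition $\m^\perp = \g^f \oplus [e, \n]$, write $v(e+y) = y^f + r(y)$ with $y^f \in \g^f$ the linear part and $r(y)$ a polynomial of degree $\geq 2$ in the $y$-coordinates; invariance of the form together with $x \in \g^e$ gives $(x | y^f) = (x | y) = \phi_1(x)(z)$, so $\phi_1(\hat x) - \phi_1(x)$ is a polynomial of degree $\geq 2$ in $y$. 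The main technical step is a repeat of the first-paragraph count applied to $y$-coordinate monomials on $\m^\perp$: any monomial of Kazhdan weight $j+2$ and polynomial degree $\geq 2$ has good degree $\leq j - 1$ for $j \geq 1$, the only way to reach good degree $j$ via $\geq 2$ factors being through $\g(1)$-coordinates of good degree $0$, which forces $j = 0$. Thus the good-degree-$j$ parts of $\phi_1(\hat x)$ and $\phi_1(x)$ coincide, and injectivity of $\phi_1$ gives $y_0 = x = \theta(x)$.
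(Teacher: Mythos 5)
Your proof is correct, and its bookkeeping is the same as the paper's: the comparison of Kazhdan degree and good degree of PBW monomials (the difference $2s+k$) is exactly the count the paper uses, though the paper runs it in one pass by writing $\Theta(x)=\theta(x)+(\dagger)$ rather than first establishing $\F_{j+2}U(\widetilde\p)\subseteq\bigoplus_{i\le j}U(\widetilde\p)(i)$. Where you genuinely diverge is in the key step, namely showing that the linear part of $\hat x=\gr_{j+2}\Theta(x)\in S(\widetilde\p)^{\n}$ is exactly $x$. The paper argues infinitesimally: writing $\hat x\equiv y\pmod{\bigoplus_{n\ge 2}S^n(\widetilde\p)}$ with $y\in\g(j)$ and $\zeta(y)=x$, it evaluates the invariance condition $\pr([z,\hat x])=0$ on $z\in\g(-j-2)$, checks that monomials of polynomial degree $\ge 2$ contribute nothing to $\pr([\,\cdot\,,z])$, and deduces $([e,y]|z)=0$ for all such $z$, whence $[e,y]=0$ and $y=\zeta(y)=x$. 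You instead argue globally in the Whittaker-model picture: $\hat x$ is the unique $N$-invariant extension of $(x|\cdot)|_{e+\g^f}$, and its linear term is computed from the differential at $e$ of the isomorphism $N\times(e+\g^f)\to e+\m^\perp$ of \cite[Lemma 2.1]{GG}, which is the projection $\m^\perp=\g^f\oplus[e,\n]\to\g^f$; the identity $(x|[e,\n])=0$ for $x\in\g^e$ then finishes. These are really the infinitesimal and global versions of the same fact. The paper's version is more self-contained (pure bracket-and-form manipulation straight from the definition of $S(\widetilde\p)^{\n}$); yours makes the geometric content transparent and shifts the burden onto the regularity of the $N$-action on $e+\m^\perp$, which the paper has in any case already imported from \cite{GG} to set up the diagram in Section 3.2. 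The one point you should make explicit is that the Slodowy-representative map $v$ is a morphism of varieties (because the action map is an isomorphism of varieties), so that its expansion $v(e+y)=y^f+r(y)$ with $r$ polynomial of degree $\ge 2$ is legitimate.
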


\begin{proof}
Take $0 \neq x \in \g^e(j)$ for $j \geq 0$.
Let $\hat x \in 
S(\widetilde{\p})^{\n}$ be the unique element 
such that $\zeta(\hat x) = x$.
We claim to start with that
$$
\hat{x} \equiv x \pmod{\textstyle\bigoplus_{n \geq 2}
S^n(\widetilde{\p})}.
$$
To see this, note $\zeta$ is a graded map with
respect to the Kazhdan grading and $x$ is in Kazhdan degree $j+ 2$,
so we certainly have
that $\hat x \equiv y 
\pmod{\bigoplus_{n \geq 2}
S^n(\widetilde{\p})}$ for some unique $y \in \g(j)$
with $\zeta(y) = x$. Now we just need to show that $y$ centralizes
$e$, i.e. $\zeta(y) = y$. If not then $[e,y] \neq 0$, 
so we can find some $z \in \g(-j-2)$ 
such that $([e,y]|z) \neq 0$. 
Then
$$
\pr([y,z]) = \chi([y,z]) = (e|[y,z]) = ([e,y]|z) \neq 0.
$$
On the other hand if $x_1 \cdots x_n$ is any monomial in
$S^n(\widetilde{\p})$ of Kazhdan degree $j+2$ for some $n \geq 2$, we
have that
$$
\pr([x_1 \cdots x_n,z]) =
\sum_{i=1}^n x_1 \cdots x_{i-1} \pr([x_i,z]) x_{i+1} \cdots x_n.
$$
As $n \geq 2$ we know that $x_i$ is of Kazhdan degree strictly
smaller than $j+2$, so $[x_i,z]$ lies in $\g(k)$ for some $k < -2$.
Hence $\pr([x_i,z]) = 0$ so $\pr([x_1\cdots x_n,z]) = 0$ too. 
So $\pr([\hat{x},z]) =
\pr([y,z]) \neq 0$.
But $\hat{x}$ lies in
$S(\widetilde{\p})^{\n}$, so by the definition of the
$\n$-action we have that $\pr([z,\hat{x}]) =  0$,
giving the desired contradiction.

Now to prove the lemma, take $x \in \g^e(j)$.
If $j = 0$ then $\Theta(x) = \theta(x)$ by Theorem~\ref{Th},
and the conclusion is clear.
So assume that $j > 0$.
We need to show that $\Theta(x) = \theta(x) + (\dagger)$
where $(\dagger)$ belongs to $\bigoplus_{i
< j} U(\widetilde{\p})(i)$. 
By the claim,
the lift $\hat{x}$ is equal to $x$ plus a linear combination
of monomials of the form $y_1 \cdots y_rz_1^\ne \cdots z_s^\ne$ for
$r+s \geq 2$ and elements $y_i \in \p(k_i)$ and $z_i \in
\k$ with 
$(k_1+2) + \cdots + (k_r + 2) + s = j+2$.
Hence the lift $\Theta(x)$ of $\hat x$ to $U(\g,e)$ is equal to $x$
plus a linear combination of such monomials $y_1 \cdots y_r z_1^\ne
\cdots z_s^\ne$ plus some element $u \in \F_{j+1}
U(\widetilde{\p})$. Each $y_1 \cdots y_r z_1^\ne \cdots z_s^\ne$ is
of degree $k_1+\cdots + k_r = j - 2r -s+2$. As $r+s \geq 2$,
we deduce that $k_1+\cdots+k_r$ 
is either zero or it is 
strictly less than $j$, so all the 
$y_1 \cdots y_r z_1^\ne \cdots z_s^\ne$ terms belong to $\bigoplus_{i <
j} U(\widetilde{\p})(i)$. 
Finally the element $u$ is itself 
a linear combination of monomials
$y_1\cdots y_r z_1^\ne \cdots z_s^\ne$ 
with $y_i \in {\p}(k_i)$ and $z_i \in \k$
such that $(k_1+2)+\cdots+(k_r+2) + s \leq j+1$.
Hence $k_1+\cdots+k_r \leq j-2r-s+1$, 
so again we have that  
$k_1+\cdots+k_r$ is either zero or strictly less than $j$.
This shows that $u$ belongs to $\bigoplus_{i <
j} U(\widetilde{\p})(i)$ too.
\end{proof}

\begin{Theorem}\label{TGoodFil}
The  homomorphism $U(\g^e) \rightarrow
U(\widetilde{\p})$ 
induced by the Lie algebra homomorphism
$\theta$ from
Theorem~\ref{lif} 
defines a $\t^e$-equivariant graded algebra
isomorphism 
$$
\theta:U(\g^e)\stackrel{\sim}{\rightarrow}
\gr' U(\g,e),
$$
viewing $U(\g^e)$ as a graded algebra via the good 
grading.
\end{Theorem}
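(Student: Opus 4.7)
Since $\theta:\g^e \to U(\widetilde\p)$ is a Lie algebra homomorphism by Theorem~\ref{lif}(2), the universal property extends it uniquely to an algebra homomorphism $\theta:U(\g^e)\to U(\widetilde\p)$. The first task is to check that $\theta$ is graded with respect to the good grading. The formula in Theorem~\ref{lif} shows that $\theta(x) \in U(\widetilde\p)(j)$ for any $x \in \g^e(j)$; the case $j = 0$ uses that $\k^\ne$ sits in good degree zero. Combined with Lemma~\ref{beep} (giving $\theta(x) = \gr'_j \Theta(x) \in \gr'_j U(\g,e)$), the image of $\theta$ lies in the graded subalgebra $\gr' U(\g,e)$ of $U(\widetilde\p)$. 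Thus $\theta$ restricts to a graded algebra homomorphism $U(\g^e)\to \gr' U(\g,e)$, and $\t^e$-equivariance is inherited from Theorem~\ref{lif}.

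For injectivity of $\theta$ on $U(\g^e)$, I pass to Kazhdan associated gradeds. Equipping $U(\g^e)$ with the Kazhdan filtration where $x \in \g^e(j)$ has degree $j+2$, the map $\theta$ becomes filtered (since in $U(\widetilde\p)$ the element $\theta(x)$ has Kazhdan degree $j+2$), and the induced homomorphism $\gr\theta:S(\g^e) \to S(\widetilde\p)$ can be composed with the projection $\zeta:S(\widetilde\p)\twoheadrightarrow S(\g^e)$ of Lemma~\ref{pbwnew}, which kills $\k^\ne$ and restricts to the identity on $\g^e$. The composite $\zeta\circ\gr\theta$ is then an algebra endomorphism of $S(\g^e)$ that is the identity on generators, hence equals $\id_{S(\g^e)}$. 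This forces $\gr\theta$ and therefore $\theta$ to be injective.

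For surjectivity onto $\gr' U(\g,e)$, I use the PBW basis from Theorem~\ref{Th}. Each ordered monomial $\Theta(x_{i_1})\cdots\Theta(x_{i_k})$ lies in $\F'_{m(\mathbf i)} U(\g,e)$ for $m(\mathbf i) := n_{i_1}+\cdots+n_{i_k}$, and by Lemma~\ref{beep} its leading good-degree component in $U(\widetilde\p)$ is $\theta(x_{i_1})\cdots\theta(x_{i_k}) = \theta(x_{i_1}\cdots x_{i_k})$, which is nonzero by the injectivity of $\theta$ just established. Peeling off highest-$m(\mathbf i)$ terms in any PBW expansion of an element of $\F'_j U(\g,e)$ then shows that $\F'_j U(\g,e)$ has basis $\{\Theta(x_{i_1})\cdots\Theta(x_{i_k}) : m(\mathbf i)\leq j\}$, so that $\gr'_j U(\g,e) = \theta(U(\g^e)(j))$. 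This yields surjectivity in each graded degree and completes the proof.

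The main obstacle is coordinating the two filtrations: injectivity of $\theta$ is cleanest via the Kazhdan filtration and the map $\zeta$ of Lemma~\ref{pbwnew}, whereas identifying the image with $\gr' U(\g,e)$ requires the good filtration and a leading-term analysis of the PBW basis. The delicate point is translating Lemma~\ref{beep} from a statement about the leading term of a single $\Theta(x)$ into a statement about the entire PBW basis; once injectivity is in hand this is routine.
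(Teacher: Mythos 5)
Your proof is correct and follows essentially the same route as the paper's: both rest on Lemma~\ref{beep} and the PBW basis of Theorem~\ref{Th}, identifying $\theta(x_{i_1}\cdots x_{i_k})$ as the leading good-degree term of $\Theta(x_{i_1})\cdots\Theta(x_{i_k})$ and then reading off a homogeneous basis of $\gr' U(\g,e)$. Your explicit Kazhdan-filtration argument for the injectivity of $\theta$ on $U(\g^e)$ (via $\zeta\circ\gr\theta=\id_{S(\g^e)}$) is a welcome addition, since the paper leaves this linear-independence point implicit in its appeal to the PBW theorem for $U(\g^e)$.
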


\begin{proof}
Pick $\Theta$ and a homogeneous basis $x_1,\dots,x_t$ for $\g^e$
as in Theorem~\ref{Th}, so that 
the monomials $\Theta(x_{i_1}) \cdots \Theta (x_{i_k})$ for
all $1 \leq i_1 \leq \cdots \leq i_k \leq t$ form a basis for
$U(\g,e)$. Lemma~\ref{beep}
 implies that
$$
\Theta(x_{i_1}) \cdots \Theta(x_{i_k}) =
\theta(x_{i_1} \cdots x_{i_k}) + (\dagger)
$$
where the first term on the right hand side lies in
$U(\widetilde{\p})(n_{i_1}+\cdots+n_{i_k})$ and the term $(\dagger)$ lies in
the sum of all strictly lower graded components. Hence
the monomials $\{\theta(x_{i_1} \cdots x_{i_k})\:|\:
1 \leq i_1 \leq \cdots \leq i_k \leq t\}$ give a homogeneous basis for
$\gr' U(\g,e)$.
By the PBW theorem for $U(\g^e)$, the same monomials
give a homogeneous basis for $\theta(U(\g^e))$.
\end{proof}

\section{Highest weight theory}
This section contains the main new results of the paper. We are going to
define Verma modules
and explain their relevance to the problem of classifying finite
dimensional irreducible $U(\g,e)$-modules.

\subsection{``Cartan subalgebra''}\label{sCartan}
Recall from $\S$\ref{sroots}
that the restricted root system $\Phi^e$ is the set of non-zero weights
of $\t^e$ on $\g = \g_0 \oplus \bigoplus_{\al \in \Phi^e} \g_\al$.
The zero weight space $\g_0$ is the centralizer of the toral subalgebra
$\t^e$ in $\g$, so it
is a Levi factor of a parabolic 
subalgebra of $\g$. According to Bala--Carter theory 
\cite[5.9.3, 5.9.4]{C}, $e$ is a
distinguished nilpotent element of $\g_0$, i.e. the only
semisimple elements of $\g_0$ that centralize $e$
belong to the center of $\g_0$.
It is also clear that $h$ and $f$ lie in
$\g_0$.

By \cite[Lemma 19]{BG}, our fixed good grading $\g = \bigoplus_{j \in \Z} \g(j)$
coincides with the eigenspace decomposition
of $\ad(h+p)$ for some element $p \in \t^e$.
As $p$ centralizes $\g_0$ it follows that
the induced grading 
$\g_0 = \bigoplus_{j \in \Z} \g_0(j)$
coincides simply with the $\ad h$-eigenspace decomposition of $\g_0$.
Of course this is another good grading for $e$, viewed now as
an element of the smaller reductive Lie algebra $\g_0$.
Moreover by \cite[5.7.6]{C} it is
an {even} grading, which means that 
$\widetilde{\g}_0 = \g_0$, $\widetilde{\p}_0 = \p_0$,
$\m_0 =\n_0$ and $\k_0 = \{0\}$.
In particular, the finite $W$-algebra
$U(\g_0,e)$ associated to $e\in\g_0$ is defined simply by
$$
U(\g_0,e):=\{u\in U(\p_0)\mid  \Pr_0([x,u]) = 0 \text{ for all }x \in \m_0\},
$$
where $I_0$ is the left ideal of $U(\g_0)$ generated
by the elements $\{x-\chi(x)\:|\:x\in \m_0\}$,
and 
$\Pr_0$ is the projection along the decomposition $U(\g_0)=
U(\p_0)\oplus I_0$.
This finite $W$-algebra is going to play the role of Cartan subalgebra
in our highest weight theory.
However, unlike in the case $e = 0$ it does not
embed obviously as a subalgebra of $U(\g,e)$; instead we will
realize it as a certain section.

Before we can do this, we need to fix one more critically
important choice:
let $\q$ be a parabolic subalgebra of $\g$ with Levi factor $\g_0$.
We stress that there is
often more than one conjugacy class of choices for $\q$, 
unlike in the case $e=0$
when there is just one conjugacy class of Borel subalgebras containing $\t$.
In the language of \cite[$\S$2]{BG}, the choice of $\q$ determines
a system $\Phi^e_+$ of positive roots
in the restricted root system $\Phi^e$, namely,
$\Phi^e_+ := \{\alpha \in \Phi^e\:|\:\g_\alpha \subseteq \q\}$.
Setting $\Phi^e_- := - \Phi^e_+$,
we define
$\g_{\pm} := \bigoplus_{\alpha \in \Phi_{\pm}^e} \g_\alpha$,
so that 
$$
\g = \g_- \oplus \g_0 \oplus \g_+,\qquad
\q = \g_0 \oplus \g_+.
$$
The choice $\Phi^e_+$ of positive roots
induces a {\em dominance ordering} $\leq$ on $(\t^e)^*$:
$\mu\leq\la$ if
$\la-\mu\in \Z_{\geq 0}\Phi^e_+$.

In this paragraph, we 
let $\a$ denote one of $\widehat{\g}, \widehat{\p},
\widetilde{\p}$ or $\g^e$.
Recall from $\S$\ref{sroots} that the dot actions of 
$\t^e$ on $\a$ and its universal enveloping (super)algebra $U(\a)$
induce decompositions
$\a = \a_0 \oplus \bigoplus_{\al \in \Phi^e} \a_\al$
and $U(\a) = \bigoplus_{\al \in \Z\Phi^e} U(\a)_\al$.
In particular, $U(\a)_0$,
the zero weight space of $U(\a)$ with 
respect to the dot action, is a subalgebra of $U(\a)$.
Also let $U(\a)_\sharp$ (resp.\ $U(\a)_\flat$) denote the left (resp.\ right)
ideal of $U(\a)$ generated by the root spaces $\a_\al$ for
$\al \in \Phi^e_+$ (resp.\ $\al \in \Phi^e_-$).
Let
$$
U(\a)_{0, \sharp} := U(\a)_0 \cap U(\a)_\sharp,
\qquad
U(\a)_{\flat,0} := U(\a)_\flat \cap U(\a)_0,
$$
which are obviously left and right ideals of $U(\a)_0$, respectively.
By the PBW theorem for non-linear Lie algebras, we actually have that
$$
U(\a)_{0,\sharp} = U(\a)_{\flat,0},
$$
hence $U(\a)_{0,\sharp}$ is a two-sided ideal of $U(\a)_0$.
Moreover $\a_0$ 
is a subalgebra of $\a$,
and by the PBW theorem again we have that
$U(\a)_0 = U(\a_0) \oplus U(\a)_{0,\sharp}$.
The projection along this decomposition
defines a surjective algebra homomorphism
$$
\pi:U(\a)_0 \twoheadrightarrow U(\a_0)
$$ 
with $\ker\pi = U(\a)_{0,\sharp}$.
Hence $U(\a)_0 / U(\a)_{0,\sharp} \cong U(\a_0)$.

We can repeat some but not all of the 
preceding discussion for the $W$-algebra $U(\g,e)$ itself.
To make things as explicit as possible, let us 
choose a homogeneous $\t^e$-weight basis
$f_1,\dots,f_m,h_1,\dots,h_l,e_1,\dots,e_m$ of $\g^e$ so that the
weight of $f_i$ is $-\ga_i\in\Phi^e_-$, the weight of $e_i$ is
$\ga_i\in\Phi^e_+$, and $h_1,\dots,h_l\in\g^e_0$;
the weights $\ga_i$ here are not necessarily distinct. Choosing
an embedding $\Theta$ as in Theorem~\ref{Th},
we get the corresponding elements
$F_i:=\Theta(f_i)\in U(\g,e)_{-\ga_i}$, $E_i:=\Theta(e_i)\in
U(\g,e)_{\ga_i}$, and $H_j:=\Theta(h_j)\in U(\g,e)_0$. 
For $\aa\in\Z_{\geq 0}^m$, we write $F^{\aa}$ for $F_1^{a_1}\dots
F_m^{a_m}$, and define $H^\bb$ and $E^\cc$ for $\bb\in\Z_{\geq 0}^l$
and $\cc\in\Z_{\geq 0}^m$ similarly. We get the following 
PBW basis for $U(\g,e)$:
\begin{equation*}
\{F^\aa H^\bb E^\cc\:|\: \aa,\cc\in\Z_{\geq 0}^m,\bb\in\Z_{\geq 0}^l\}.
\end{equation*}
The subspace
$U(\g,e)_\al$ in the restricted root space decomposition
has basis given by all the
PBW monomials $F^\aa H^\bb E^\cc$ such that
$\sum_{i} (c_i-a_i)\ga_i = \al$.

Now we define $U(\g,e)_\sharp$ (resp.\ $U(\g,e)_\flat$) to be the
left (resp.\ right) ideal of $U(\g,e)$ generated by
$E_1,\dots,E_m$ (resp.\ $F_1,\dots,F_m$).
Note that $U(\g,e)_\sharp$ (resp.\ $U(\g,e)_\flat$)
is equivalently the left (resp.\ right) ideal of
$U(\g,e)$ generated by all $U(\g,e)_\al$ for $\al \in \Phi^e_+$
(resp.\ $\al \in \Phi^e_-$), so it does not depend on the
explicit choice of the basis.
Set
$$
U(\g,e)_{0,\sharp} := U(\g,e)_0 \cap U(\g,e)_\sharp,
\qquad
U(\g,e)_{\flat,0} := U(\g,e)_\flat \cap U(\g,e)_0,
$$
which are obviously left and right ideals of the zero weight space 
$U(\g,e)_0$, respectively.
The PBW monomials $F^\aa H^\bb E^\cc$ with
$\cc\neq{\mathbf 0}$ (resp.\ $\aa\neq{\mathbf 0}$) form a basis of 
$U(\g,e)_\sharp$ (resp.\ $U(\g,e)_\flat$), and the
PBW monomials $F^\aa H^\bb E^\cc$ with $\sum_i(c_i-a_i)\ga_i = 0$ form a basis of $U(\g,e)_0$. It follows that 
$$
U(\g,e)_{0,\sharp} = U(\g,e)_{\flat,0},
$$
hence it is a two-sided ideal of  $U(\g,e)_0$.
Moreover the cosets of  the PBW monomials of the
form $H^\bb$ form a basis of the quotient algebra
$U(\g,e)_0/U(\g,e)_{0,\sharp}$. 
However the PBW monomials $H^\bb$ need not span a {\em subalgebra} of 
$U(\g,e)$, unlike the situation for the algebras $U(\a)$ 
discussed earlier.

The goal now is to prove using the BRST cohomology definition
of the $W$-algebra that
$U(\g,e)_0 / U(\g,e)_{0, \sharp}$ is canonically isomorphic to 
$U(\g_0,e)$.
The isomorphism involves a shift in the spirit of
Corollary~\ref{rhs}: let
$$\gamma := \sum_{\substack{1 \leq i \leq r \\
\be_i|_{\t^e}\in \Phi^e_-}}\be_i,
\qquad\qquad
\delta := \sum_{\substack{1 \leq i \leq r \\
\be_i|_{\t^e}\in \Phi^e_-\\ d_i \geq 2}}\be_i
+
{\textstyle\frac{1}{2}}\sum_{\substack{1 \leq i \leq r \\ \be_i|_{\t^e} \in \Phi^e_-
\\
d_i =1}} \be_i,
$$
recalling from $\S$2 that $b_1,\dots,b_r$ is a homogeneous basis
for $\n$ with $b_i \in \g(-d_i)$ of
weight $\be_i \in \t^*$.

\begin{Lemma}\label{es}
$\gamma$ and $\delta$ extend uniquely to 
characters of $\p_0$.
\end{Lemma}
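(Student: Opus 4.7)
The plan is to imitate closely the proof of Lemma~\ref{fine}, but restricted to the piece of $\n$ cut out by $\Phi^e_-$ and split further according to whether the defining basis element lies in $\m$ or in $\k$. First I would introduce the subspaces
$$
\n_- := \bigoplus_{\alpha\in\Phi^e_-}(\g_\al\cap\n), \qquad
\m_- := \n_-\cap\m, \qquad
\k_- := \n_-\cap\k,
$$
so that $\n_-=\m_-\oplus\k_-$ and the basis elements $b_i$ with $\be_i|_{\t^e}\in\Phi^e_-$ form a homogeneous basis of $\n_-$, subdivided by $d_i\geq 2$ versus $d_i=1$ into bases of $\m_-$ and $\k_-$ respectively. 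The key observation is that $\ad\h_0$ preserves each of these three subspaces: since $\h_0\subseteq\g_0$, the adjoint action of $\h_0$ commutes with $\t^e$ and hence preserves every restricted root space; and since $\h_0\subseteq\g(0)$, it preserves the good grading and in particular the subspaces $\n$, $\m$ and $\k$.

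Next I would define
$$
\tilde{\gamma}, \tilde{\delta}:\h_0\rightarrow\C,\qquad x\mapsto \tr(\ad x|_{\n_-}),\quad x\mapsto \tr(\ad x|_{\m_-})+\tfrac{1}{2}\tr(\ad x|_{\k_-}),
$$
which are Lie algebra homomorphisms because the trace of any commutator of operators preserving a finite dimensional subspace is zero. Extending each by zero on the nilradical $\bigoplus_{j>0}\g_0(j)$ of $\p_0$ gives characters of $\p_0$, exactly as in the proof of Lemma~\ref{fine}. To see that these characters really are extensions of $\gamma,\delta\in\t^*$, recall that $\t\subseteq\g_0\cap\g(0)=\h_0$; for $t\in\t$ the operator $\ad t$ acts on $b_i$ as multiplication by $\be_i(t)$, so the trace formulas exactly reproduce the definitions of $\gamma$ and $\delta$ once one splits the sum over the $b_i\in\n_-$ according to $d_i$.

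Uniqueness is immediate: since $\h_0$ is reductive (being the centralizer in the reductive algebra $\g_0$ of the semisimple element $h$), any character of $\p_0$ vanishes on the nilradical and on $[\h_0,\h_0]$, hence is determined by its restriction to $Z(\h_0)\subseteq\t$. I do not expect any serious obstacle; the only point to keep in mind is that the argument for $\delta$ is essentially the same as for $\gamma$, with the factor $\tfrac{1}{2}$ appearing on the $\k_-$-piece playing no role in the verification that the trace formula gives a homomorphism—it is only needed to match the coefficient in the definition of $\delta$ when evaluating on $\t$.
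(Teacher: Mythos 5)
Your proposal is correct and follows essentially the same route as the paper: define the character on $\h_0$ as a trace of $\ad x$ on the relevant negative subspaces ($\n_-$ for $\gamma$, and $\m_-$ plus half of $\k_-$ for $\delta$), then extend by zero on the nilradical of $\p_0$, exactly as in the proof of Lemma~\ref{fine}. The extra details you supply (invariance of $\n_-$, $\m_-$, $\k_-$ under $\ad\h_0$, the check on $\t$, and the uniqueness argument) are all correct and simply flesh out what the paper leaves implicit in ``the construction of $\delta$ is similar.''
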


\begin{proof}
For $x \in \h_0$, $\ad x$ leaves the subspace
$\n_- = \bigoplus_{\al \in \Phi^e_-} \n_\al$ invariant, so the map
$$
\ga:\h_0 \rightarrow \C, \qquad x \mapsto \tr(\ad x|_{\n_-})
$$
is a well-defined Lie algebra homomorphism
with $\gamma(x) := \tr(\ad x|_{\n_-})$.
Extending by zero on the
nilradical of $\p_0$, this defines the required
homomorphism $\gamma:\p_0 \rightarrow \C$.
The construction of $\delta$ is similar.
\end{proof}

\begin{Lemma}
\label{bed}
The following diagram commutes:
$$
\begin{CD}
U(\g^e_0)@>\theta>> U(\widetilde\p)_0\\
@VS_{-\delta}VV@VV\pi V\\
U(\p_0)@<<S_{-\gamma}< U(\p_0).
\end{CD}
$$
\end{Lemma}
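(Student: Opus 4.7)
The plan is to verify the equivalent identity $\pi\circ\theta = S_{\gamma-\delta}$ as maps $U(\g^e_0) \to U(\p_0)$ (obtained from the diagram by composing the bottom arrow with $S_\gamma = S_{-\gamma}^{-1}$). Both sides are algebra homomorphisms: $\theta|_{\g^e_0}$ is a Lie algebra homomorphism by Theorem~\ref{lif}(2), $\pi$ is an algebra homomorphism since $U(\widetilde\p)_{0,\sharp}$ is a two-sided ideal of $U(\widetilde\p)_0$, and $S_{\gamma-\delta}$ is an automorphism of $U(\p_0)$ whose restriction to the subalgebra $U(\g^e_0)$ takes values in $U(\p_0)$. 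It therefore suffices to check the identity on the generators $x \in \g^e_0$.

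For homogeneous $x \in \g^e_0(j)$ with $j > 0$, Theorem~\ref{lif} gives $\theta(x) = x \in \g_0 = \widetilde\p_0$, so $\pi(\theta(x)) = x$; and $x$ lies in the nilradical of $\p_0$, so $\gamma(x) = \delta(x) = 0$ by the construction in Lemma~\ref{es} and $S_{\gamma-\delta}(x) = x$ too. The essential case is $x \in \h^e_0 = \g^e_0(0)$, where
$$
\theta(x) = x + {\textstyle\frac{1}{2}}\sum_{i=1}^{2s}[x, z_i^*]^\ne z_i^\ne.
$$
Choose the symplectic basis to consist of $\t^e$-weight vectors, which is possible because $\lan\cdot|\cdot\ran$ is $\h^e$-invariant by Lemma~\ref{inv}; evenness of the $\g_0$-grading forces $\k_0 = 0$, so every $z_i$ has some weight $\nu_i \in \Phi^e_+ \sqcup \Phi^e_-$, and then $[x, z_i^*]^\ne$, $z_i^\ne$ have weights $-\nu_i$, $\nu_i$ with product of total weight $0$. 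If $\nu_i \in \Phi^e_+$ the monomial ends in the positive-weight factor $z_i^\ne$, so it lies in $U(\widetilde\p)_\sharp \cap U(\widetilde\p)_0 = \ker\pi$. If $\nu_i \in \Phi^e_-$, the Weyl algebra relation
$$
[x, z_i^*]^\ne z_i^\ne = z_i^\ne [x, z_i^*]^\ne + \lan [x, z_i^*] | z_i \ran
$$
moves the now positive-weight factor $[x, z_i^*]^\ne$ to the right; the operator term again lies in $\ker\pi$, leaving only the scalar. Hence
$$
\pi(\theta(x)) = x + {\textstyle\frac{1}{2}}\sum_{\nu_i \in \Phi^e_-}\lan [x, z_i^*] | z_i \ran.
$$

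To identify this with $S_{\gamma-\delta}(x)$, use the $\ad$-invariance of $\lan\cdot|\cdot\ran$ once more to rewrite each surviving scalar as $\lan [x, z_i] | z_i^* \ran$. Since $\chi$ vanishes on nonzero $\t^e$-weight spaces, $\lan\cdot|\cdot\ran$ restricts to a perfect pairing $\k_- \times \k_+ \to \C$; for $i$ with $\nu_i \in \Phi^e_-$, the vectors $\{z_i\}$ form a basis of $\k_-$ with dual basis $\{z_i^*\}$ in $\k_+$, so the remaining sum equals $\tr(\ad x|_{\k_-})$ and $\pi(\theta(x)) = x + \tfrac12\tr(\ad x|_{\k_-})$. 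On the other hand, the $b_i$'s with $d_i = 1$ and $\be_i|_{\t^e} \in \Phi^e_-$ form a basis of $\k_-$, so by the definitions $\gamma - \delta = \tfrac12\sum_{d_i=1,\,\be_i|_{\t^e}\in\Phi^e_-}\be_i$ and hence $(\gamma - \delta)(x) = \tfrac12\tr(\ad x|_{\k_-})$, matching the above. The main delicate point is the weight bookkeeping in computing $\pi(\theta(x))$; once the individual terms are correctly classified, the identification with $\gamma - \delta$ is essentially forced by the duality between $\k_-$ and $\k_+$ under $\lan\cdot|\cdot\ran$.
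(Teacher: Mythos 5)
Your proof is correct and follows essentially the same route as the paper's: reduce to generators, dispose of $\g^e_0(j)$ for $j>0$ trivially, and for $x\in\h^e_0$ use a $\t^e$-weight symplectic basis (possible since $\k_0=\{0\}$), kill the terms ending in positive-weight factors, commute the remaining Weyl-algebra factors to extract the scalar $\tfrac12\tr(\ad x|_{\k_-})$, and identify this with $\gamma(x)-\delta(x)$. The only difference is cosmetic (indexing by sign of weight rather than reordering the basis, and making the reduction to generators explicit).
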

\begin{proof}
Take 
$x \in \g^e_0(j)$. If $j > 0$ 
then $S_{-\gamma}(\pi(\theta(x))) = 
x = S_{-\delta}(x)$. Now assume that $j = 0$, i.e. $x \in \h^e_0$.
As
 $\k_0 = \{0\}$,
we can choose the elements $z_1,\dots,z_{2s}$ 
from Theorem~\ref{lif} so that 
$z_1,\dots,z_s$ (resp.\ $z_{s+1},\dots,z_{2s}$) 
belong to negative (resp.\ positive) $\t^e$-root spaces.
Then
\begin{align*}
\pi(\theta(x))
&=
\pi(x + \textstyle\frac{1}{2}\sum_{i=1}^{2s}
[x,z_i^*]^\ne z_i^\ne)
=\pi(x + \textstyle\frac{1}{2}\sum_{i=1}^{s}
[x,z_i^*]^\ne z_i^\ne)\\
&=
\pi(x + \textstyle\frac{1}{2}\sum_{i=1}^s
z_i^\ne [x,z_i^*]^\ne
+
\frac{1}{2}\sum_{i=1}^s
\langle  [x,z_i^*] | z_i\rangle)\\
&=
x
+
\textstyle\frac{1}{2}\sum_{i=1}^s\langle [x,z_i] | z_i^*\rangle
= x + \gamma(x) - \delta(x),
\end{align*}
noting 
that $\gamma(x) - \delta(x)$ is the trace of
$\frac{1}{2}\ad x$ on $\k_- = \bigoplus_{\alpha \in \Phi^e_-} \k_\alpha$.
Hence $S_{-\gamma}(\pi(\theta(x))) = S_{-\delta}(x)$
as required.
\end{proof}

\begin{Theorem}\label{csa}
The restriction of $S_{-\gamma} \circ \pi:U(\widetilde\p)_0
\twoheadrightarrow U(\p_0)$ defines
a surjective algebra homomorphism
$$\pi_{-\gamma}:U(\g,e)_0 
\twoheadrightarrow
U(\g_0,e)$$
with  $\ker\pi_{-\gamma} = U(\g,e)_{0,\sharp}$.
Hence $
U(\g,e)_0 / U(\g,e)_{0,\sharp} \cong U(\g_0,e)$.
\end{Theorem}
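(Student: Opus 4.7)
The plan is to verify the theorem's three assertions in three stages, the middle of which is the main analytic content.

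First, applying the general discussion preceding the theorem to the non-linear Lie algebra $\a=\widetilde\p$ (and using that $\widetilde\p_0=\p_0$, since the even grading on $\g_0$ forces $\k_0=0$), the map $\pi\colon U(\widetilde\p)_0\twoheadrightarrow U(\p_0)$ is a surjective algebra homomorphism with kernel $U(\widetilde\p)_{0,\sharp}$. Composing with the automorphism $S_{-\gamma}$ of $U(\p_0)$ gives $\pi_{-\gamma}$ as a surjective algebra homomorphism $U(\widetilde\p)_0\twoheadrightarrow U(\p_0)$ with the same kernel.

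The main step is showing $\pi_{-\gamma}(U(\g,e)_0)\subseteq U(\g_0,e)$. The character $\gamma$ extends uniquely to a character of $\g_0$ via $x\mapsto\tr(\ad x|_{\n_-})$, well-defined since $\g_0$ normalizes $\n_-$ and vanishing on $\m_0$ (as the trace of a nilpotent). Consequently $S_{-\gamma}$ extends to an automorphism of $U(\g_0)$ that fixes $\m_0$ pointwise and preserves $U(\g_0,e)$, so the target condition $\Pr_0([y,S_{-\gamma}(\pi(u))])=0$ for $y\in\m_0$ reduces to $\Pr_0([y,\pi(u)])=0$. From the defining condition $\Pr([y-y^\ne,u])=0$ specialized to $y\in\m_0\subseteq\m$ (where $y^\ne=0$), one has $\Pr([y,u])=0$. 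The inclusion $I_0\subseteq J$ --- each generator $y-\chi(y)$ of $I_0$ coincides with the generator $y-\chi(y)-y^\ne$ of $J$ --- forces $\Pr([y,v])=\Pr_0([y,v])$ for $v\in U(\p_0)$. Writing $u=\pi(u)+u_\sharp$ with $u_\sharp\in U(\widetilde\p)_{0,\sharp}$ and combining these identities yields $\Pr_0([y,\pi(u)])=-\Pr([y,u_\sharp])$. The main calculation is to show this vanishes: it uses the PBW expansion of $u_\sharp$ in terms of positive-weight right factors, the full set of defining conditions of $U(\g,e)$ (in particular those for $x\in\n_+$), and the observation that the shift by $\gamma$ absorbs the reordering terms produced, in the spirit of the computation in Lemma~\ref{bed}.

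For the kernel, restricting from the first stage gives $\ker\pi_{-\gamma}|_{U(\g,e)_0}=U(\g,e)_0\cap U(\widetilde\p)_{0,\sharp}$. Using the PBW basis $\{F^\aa H^\bb E^\cc\}$ from Theorem~\ref{Th}, the weight-zero condition $\sum_i(c_i-a_i)\gamma_i=0$ forces $\cc=0\iff\aa=0$ because the $\gamma_i\in\Phi^e_+$ lie in an open half-space; monomials with $\cc\neq 0$ lie in $U(\widetilde\p)_{0,\sharp}$ via the right factor $E^\cc\in U(\widetilde\p)_\sharp$, whereas the pure $H^\bb$ monomials map under $\pi_{-\gamma}$ to linearly independent elements of $U(\p_0)$ by Lemma~\ref{bed} (and the PBW theorem applied to $\g_0$). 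Hence $\ker\pi_{-\gamma}=U(\g,e)_{0,\sharp}$. For surjectivity, Lemma~\ref{bed} gives $\pi_{-\gamma}(\theta(h_j))=S_{-\delta}(h_j)$ for $h_j\in\h^e_0$, and Lemma~\ref{beep} together with Theorem~\ref{Th} applied to $\g_0$ yields $\pi_{-\gamma}(\Theta(h_j))\equiv\Theta_0(h_j)$ modulo lower Kazhdan filtration for general $h_j\in\g^e_0$; an induction on Kazhdan filtration then lifts every PBW basis element of $U(\g_0,e)$ into the image. The main obstacle is the middle stage, specifically the vanishing of $\Pr([y,u_\sharp])$, for which the $\gamma$-shift is precisely calibrated.
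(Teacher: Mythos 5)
Your first and third stages are sound and essentially reproduce the paper's bookkeeping (the $\pi$ of the general discussion applied to $\a=\widetilde\p$, and the identification of $\ker\pi_{-\gamma}$ and surjectivity via the monomials $H^\bb$, Lemmas~\ref{beep} and \ref{bed}, and Theorem~\ref{TGoodFil}). The fatal problem is in your middle stage. The weight $\gamma$ does \emph{not} extend to a character of $\g_0$ in general: a character of $\g_0$ must vanish on $[\g_0,\g_0]$, and already for $\g=\gl_4$, $e=e_{1,2}+e_{3,4}$ of Jordan type $(2,2)$ with the standard good grading and standard $\q$, one computes $\n_-=\C e_{4,1}$, so $\gamma=\eps_4-\eps_1$, which does not kill $e_{1,1}-e_{2,2}=[e_{1,2},e_{2,1}]\in[\g_0,\g_0]$. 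So $S_{-\gamma}$ is not an automorphism of $U(\g_0)$, it need not commute with $\Pr_0\circ\,[y,\cdot\,]$ for $y\in\m_0$, and it need not preserve $U(\g_0,e)$ (in standard Levi type it visibly moves the shifted copy of $S(\t)^{W_0}$ inside $U(\p_0)$ unless $\gamma|_\t$ is orthogonal to $\Phi_0$). Hence your reduction of the target condition $\Pr_0([y,S_{-\gamma}(\pi(u))])=0$ to $\Pr_0([y,\pi(u)])=0$ is unjustified.

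Worse, the reduced statement is actually false, so the ``main calculation'' you defer cannot be completed: the identity $\Pr_0([y,\pi(u)])=-\Pr([y,u_\sharp])$ is correct, but the unshifted projection $\pi(u)$ of an element of $U(\g,e)_0$ genuinely fails to lie in $U(\g_0,e)$ in general --- that is precisely why the shift $S_{-\gamma}$ appears in the statement of the theorem, and it cannot simultaneously be ``absorbed'' by an automorphism argument and be ``calibrated'' to make $\Pr([y,u_\sharp])$ vanish. The paper circumvents this by working with the BRST characterization of Theorem~\ref{tidish}: one checks that $\pi\circ\phi=\phi_0\circ S_{-\gamma}\circ\pi$ on $U(\widetilde\p)_0$ and $\pi\circ d=d_0\circ\pi$, so that $d(\phi(u))=0$ forces $d_0(\phi_0(S_{-\gamma}(\pi(u))))=0$; the shift by $\gamma$ arises there from normal-ordering the terms $f_i[b_i,x]^\ch$ with $\be_i|_{\t^e}\in\Phi^e_-$, i.e.\ $f_i[b_i,x]^\ch=-[b_i,x]^\ch f_i+\lan f_i,[b_i,x]\ran$. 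If you want to avoid BRST you must carry out the commutator analysis for $u_\sharp$ honestly, keeping the nonzero contribution $\gamma(\cdot)$ it produces and matching it against the effect of $S_{-\gamma}$ on $\pi(u)$; as written, your argument double-counts that correction and proves nothing.
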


\begin{proof}
Consider the following
diagram:
$$
\begin{CD}
U(\g,e)_0 
\:\:\:{\hookrightarrow}\:\:\:
&U(\widetilde\p)_0
@>S_{-\gamma}\circ \pi>>U(\p_0)&\:\:\:\hookleftarrow \:\:U(\g_0,e)\\
&@V\phi VV@VV\phi_0 V\\
&U(\hat\p)_0
@> \pi>>U(\hat\p_0)\\
&@Vd VV@VV d_0 V\\
&U(\hat\g)_0
@> \pi>>U(\hat\g_0)
\end{CD}
$$
We have already constructed the horizontal maps.
From top to bottom, 
they have kernels $U(\widetilde\p)_{0,\sharp}, U(\widehat\p)_{0,\sharp}$
and $U(\widehat\g)_{0,\sharp}$.
For the vertical maps,
recall the derivation $d:U(\widehat\g)\rightarrow U(\widehat\g)$
and the homomorphism $\phi:U(\widetilde\p) \rightarrow U(\widehat\p)$
from $\S$2. One checks that both of these are $\t^e$-equivariant,
hence they restrict to give the maps $d$ and $\phi$ in the diagram.
The maps $d_0$ and $\phi_0$ come from the analogous maps for the
reductive subalgebra $\g_0$.

We first verify that the top square commutes. 
As $\phi$ is $\t^e$-invariant, it maps
$U(\widetilde\p)_{0,\sharp}$ into $U(\widehat\p)_{0,\sharp}$.
Hence the top square commutes on 
restriction to
$U(\widetilde\p)_{0,\sharp}$, as we get zero both ways round.
Since $U(\widetilde\p)_0 = U(\p)_0 \oplus U(\widetilde\p)_{0,\sharp}$
it remains to check the square commutes on restriction to
$U(\p_0)$. So take $x \in \p_0$.
Introduce the shorthands
$\sum_i^{\pm}$ and $\sum_i^0$
for the sums over
all $1 \leq i \leq r$ such that 
$\be_i|_{\t^e} \in \Phi^e_{\pm}$
and  $\be_i|_{\t^e} = 0$, respectively.
Both ${\textstyle \sum_i^+} f_i[b_i,x]^\ch$ and
${\textstyle \sum_i^-} [b_i,x]^\ch f_i$ belong to
$U(\widehat\p)_{0,\sharp}$, hence map to zero under $\pi$.
So we get
\begin{align*}
\pi(\phi(x)) & = \pi(x+{\textstyle\sum_i} f_i[b_i,x]^\ch)
= \pi(x+{\textstyle \sum_i^0} f_i[b_i,x]^\ch+{\textstyle \sum_i^-} f_i[b_i,x]^\ch)
\\
& = \pi(\phi_0(x)+{\textstyle \sum_i^-} \lan f_i,[b_i,x]\ran+{\textstyle \sum_i^-} [b_i,x]^\ch f_i)\\
&= \phi_0(x)-\textstyle\sum_i^- \langle f_i, [x,b_i] \rangle
= \phi_0(x) - \gamma(x) = \phi_0(S_{-\gamma}(\pi(x))).
\end{align*}

Next we check that the bottom square commutes. 
Again $d$ is $\t^e$-equivariant so maps
$U(\widehat\p)_{0,\sharp}$ into $U(\widehat\g)_{0,\sharp}$,
hence the bottom square commutes on restriction to
$U(\widehat\p)_{0,\sharp}$.
It remains to check it commutes on restriction to
$U(\widehat\p_0)$.
Recalling $\widehat\p_0 = \p_0 \oplus \m^*_0 \oplus \m^\ch_0$, it 
suffices to consider elements $x \in\p_0$, $f \in \m^*_0$ and $y^\ch \in \m^\ch_0$.
In the first case we calculate:
\begin{align*}
\pi(d(x)) &= 
\pi({\textstyle\sum_i^0} f_i [b_i,x]
+
{\textstyle\sum_i^+} f_i [b_i,x]
+
{\textstyle\sum_i^-} f_i[b_i,x])\\
&= 
\pi(d_0(x)
+
{\textstyle\sum_i^-} [b_i,x]f_i)
= d_0(\pi(x)).
\end{align*}
The second case is very similar.
The calculation in the third case is as follows:
\begin{align*}
\pi(d(y^\ch)) 
&=
\pi(y-\chi(y)-y^\ne+\textstyle{\sum_i f_i[b_i,y]^\ch})
=
\pi(d_0(y^\ch)+
\textstyle\sum_i^- f_i[b_i,y]^\ch)
\\
&=
d_0(y^\ch)-
\pi(\textstyle\sum_i^- [b_i,y]^\ch f_i + \sum_i^- \lan f_i, [y,b_i]\ran)
=
d_0(\pi(y^\ch)),
\end{align*}
noting each $\lan f_i, [y,b_i] \ran = 0$ by degree considerations.

Now Theorem~\ref{tidish} gives that $d (\phi(u)) = 0$
for all $u \in U(\g,e)_0$. By
the commutativity of the diagram 
we deduce that $d_0(\phi_0(S_{-\gamma}(\pi(u)))) = 0$ for all such $u$.
By Theorem~\ref{tidish} again, 
we have that
$U(\g_0,e) = \{u \in U({\p}_0)\:|\: d_0(\phi_0(u)) = 0\}$.
Hence $S_{-\gamma}(\pi(U(\g,e)_0)) \subseteq U(\g_0,e)$,
showing that the restriction of $S_{-\gamma}\circ \pi$ defines an algebra 
homomorphism
$\pi_{-\gamma}:U(\g,e)_0 \rightarrow U(\g_0,e)$.
Moreover $U(\g,e)_{0,\sharp} \subseteq
U(\widetilde\p)_{0,\sharp}$ so 
$U(\g,e)_{0,\sharp} \subseteq \ker \pi_{-\gamma}$.
Recall that the quotient $U(\g,e)_0 / U(\g,e)_{0,\sharp}$
has basis given by the cosets of the PBW monomials
$H^{\bf b}$  with $\mathbf b \in \Z_{\geq 0}^l$.
Therefore to complete the proof it suffices to show that
the monomials $\pi_{-\gamma}(H^{\mathbf b}) \in U(\g_0,e)$ actually form
a basis for for
$U(\g_0,e)$, since that shows simultaneously that $\pi_{-\gamma}$ is surjective
and that its kernel is no bigger than $U(\g,e)_{0,\sharp}$.

By Lemma~\ref{beep}, $H^{\mathbf b} = \theta(h^{\mathbf b}) + (\dagger)$
where $h^{\mathbf b} := h_1^{b_1} \cdots h_l^{b_l}$ and $(\dagger)$
denotes a linear combination of terms of strictly smaller degree in the
good grading.
Applying $\pi_{-\gamma}$ using Lemma~\ref{bed}, we deduce that
$\pi_{-\gamma}(H^{\mathbf b})  = S_{-\delta}(h^{\mathbf b}) + (\ddagger)$
where $(\ddagger)$ consists of lower degree terms.
Recalling that $\gr' U(\g_0,e) = U(\g^e_0)$ by Theorem~\ref{TGoodFil},
we see by the PBW theorem for $U(\g^e_0)$ that the monomials $\pi_{-\gamma}(H^{\mathbf b})$ for all $\mathbf b \in \Z_{\geq 0}^l$
do indeed form a basis for $U(\g_0,e)$.
\end{proof}

\subsection{Verma modules}\label{sVerma}
Recall the embedding
$\theta:\t^e \hookrightarrow U(\g,e)$
from Theorem~\ref{lif}
and the weight $\delta$ from $\S$\ref{sCartan}.
For a $U(\g,e)$-module $V$ and
$\la\in(\t^e)^*$, we define the {\em $\la$-weight space}
$$
V_\la:=\{v\in V\:|\: (\theta+\delta)(t) v = \la(t) v\text{ for all } t \in \t^e\}.
$$
By Theorem~\ref{lif}(1) we have that $U(\g,e)_\al V_\la\subseteq V_{\la+\al}$. 
In particular each $V_\la$ is invariant under the action of the subalgebra
$U(\g,e)_0$.
We say that $V_\la$ is a {\em maximal weight space} of $V$
if $U(\g,e)_\sharp V_\la = \{0\}$.
For example, if $\la$ is any {\em maximal weight} of $V$ in the dominance
ordering, i.e. $V_\la \neq \{0\}$ and $V_\mu = \{0\}$ for all $\mu > \la$,
then $V_\la$ is a maximal weight space of $V$.

Let $V_\la$ be a maximal weight space in
a $U(\g,e)$-module $V$.
Then the action of $U(\g,e)_0$ on 
$V_\la$ factors through the map $\pi_{-\gamma}$ from Theorem~\ref{csa}
to make
$V_\la$ into a $U(\g_0,e)$-module
such that $u m = \pi_{-\gamma}(u) m$ for $u \in U(\g,e)_0$ and $m \in V_\la$.
Note also that $\t^e$ is a Lie subalgebra of $U(\g_0,e)$
(since $\t^e$ even lies in the center of $U(\p_0)$),
so we get another action of $\t^e$ on $V_\la$
by restricting the $U(\g_0,e)$-action.
By Lemma~\ref{bed} this new action satisfies
$$
t v = \la(t) v
$$
for all $t \in \t^e$.
This explains why we included the additional shift by $\delta$
in the  definition of the $\la$-weight space  of a $U(\g,e)$-module.

We say that a $U(\g,e)$-module $V$
is a {\em highest weight module} 
if it is generated 
by a maximal weight space $V_\la$
such that $V_\la$ is finite 
dimensional and irreducible as a $U(\g_0,e)$-module.
In that case, as we will see shortly, $\la$ is the unique maximal
weight of $V$ in the dominance ordering.
Let
$$
\{V_\La\:|\:\La \in \mathcal L\}
$$
be a complete set of pairwise inequivalent 
finite dimensional irreducible $U(\g_0,e)$-modules
for some set $\mathcal L$.
If $V$ is a highest weight module generated by
a maximal weight space $V_\la$ and $V_\la \cong V_\La$
for $\La \in \mathcal L$,
we say that $V$ is 
of {\em type} $\La$.

Since $U(\g,e)_\sharp$ is invariant under left multiplication
by $U(\g,e)$ and right multiplication by $U(\g,e)_0$,
the quotient $U(\g,e) / U(\g,e)_\sharp$ is
a $(U(\g,e), U(\g,e)_0)$-bimodule.
Moreover the right action of $U(\g,e)_0$ factors through the
map $\pi_{-\gamma}$ from Theorem~\ref{csa} to make
$U(\g,e) / U(\g,e)_\sharp$ into a $(U(\g,e), U(\g_0,e))$-bimodule.
For $\La\in{\mathcal L}$ we define the {\em Verma module of type $\La$} by
setting
$$
M(\La,e):=(U(\g,e)/U(\g,e)_\sharp)\otimes_{U(\g_0,e)} V_\La.
$$
We are going show that this is a universal
highest weight module of type $\La$, meaning that
 $M(\La,e)$ is a highest weight module of type $\La$ (in particular
it is non-zero) and moreover
if $V$ is another highest weight module 
generated by a maximal weight space $V_\la$
and $f: V_\La \stackrel{\sim}{\rightarrow} V_\la$
is a $U(\g_0,e)$-module isomorphism
then there is a unique $U(\g,e)$-module
homomorphism $\tilde{f}: M(\La,e)\twoheadrightarrow V$ extending $f$.
Recall the PBW basis for $U(\g,e)$ fixed in $\S$\ref{sCartan}.

\begin{Lemma}\label{LRBasis}
As a right $U(\g_0,e)$-module,
$U(\g,e)/U(\g,e)_\sharp$ is free
with basis $\{F^\aa\:|\:\aa\in\Z_{\geq 0}^m\}.$
\end{Lemma}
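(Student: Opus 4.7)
The plan is to identify two vector space bases of the quotient $V := U(\g,e)/U(\g,e)_\sharp$ and check that they are compatible via the right $U(\g_0,e)$-action. First, the PBW theorem (Theorem~\ref{Th}), combined with the description of $U(\g,e)_\sharp$ given in $\S$\ref{sCartan}, shows that $V$ has a vector space basis consisting of the cosets $\{\overline{F^\aa H^\bb} : \aa \in \Z_{\geq 0}^m,\, \bb \in \Z_{\geq 0}^l\}$. Second, by Theorem~\ref{csa}, the images $\{\pi_{-\ga}(H^\bb) : \bb \in \Z_{\geq 0}^l\}$ form a basis of $U(\g_0,e)$.

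To prove freeness, I would consider the map
\[
\Psi : \bigoplus_{\aa \in \Z_{\geq 0}^m} U(\g_0,e) \longrightarrow V, \qquad (w_\aa)_\aa \mapsto \sum_\aa \overline{F^\aa}\cdot w_\aa,
\]
and show it is bijective. The key computation is that $\overline{F^\aa}\cdot \pi_{-\ga}(H^\bb) = \overline{F^\aa H^\bb}$: indeed $H^\bb$ is itself a lift of $\pi_{-\ga}(H^\bb)$, and any other lift differs by an element of $\ker \pi_{-\ga} = U(\g,e)_{0,\sharp} \subseteq U(\g,e)_\sharp$, which is annihilated in the quotient. Thus $\Psi$ carries the standard basis $\{F^\aa \otimes \pi_{-\ga}(H^\bb)\}_{\aa,\bb}$ of the source to the distinguished basis $\{\overline{F^\aa H^\bb}\}_{\aa,\bb}$ of $V$, so $\Psi$ is an isomorphism and $\{\overline{F^\aa}\}_\aa$ is the required right $U(\g_0,e)$-basis of $V$.

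The point requiring care, beyond assembling PBW data, is verifying that the right action of $U(\g_0,e)$ on $V$ is well-defined, i.e.\ that right multiplication by $U(\g,e)_0$ preserves the left ideal $U(\g,e)_\sharp$. The key observation is the weight-space inclusion $U(\g,e)_\al \subseteq U(\g,e)_\sharp$ for every $\al \in \Phi^e_+$: any PBW monomial $F^\aa H^\bb E^\cc$ of strictly positive $\t^e$-weight must have $\cc \neq \mathbf 0$, since otherwise the weight would equal $-\sum_k a_k \ga_k \leq 0$ in the dominance ordering. Writing a generator $E_i$ past $z \in U(\g,e)_0$ yields $E_i z = z E_i + [E_i,z]$ with $[E_i,z] \in U(\g,e)_{\ga_i} \subseteq U(\g,e)_\sharp$, and stability follows because $U(\g,e)_\sharp$ is a left ideal. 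This is the main obstacle; once it is in place the proof reduces to the comparison of bases above.
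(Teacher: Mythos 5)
Your proof is correct and follows essentially the same route as the paper's: both rest on the PBW basis $F^\aa H^\bb E^\cc$, the observation that the monomials with $\cc\neq\mathbf{0}$ span $U(\g,e)_\sharp$, and the identification $U(\g,e)_0/U(\g,e)_{0,\sharp}\cong U(\g_0,e)$ from Theorem~\ref{csa}. You additionally verify that right multiplication by $U(\g,e)_0$ preserves the left ideal $U(\g,e)_\sharp$ (a fact the paper asserts without proof just before defining the Verma modules), and your weight argument for that step is sound.
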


\begin{proof}
This follows because
the cosets of the PBW monomials of
the form $F^\aa H^\bb$ give a basis for the quotient
$U(\g,e)/U(\g,e)_\sharp$
and the cosets of the monomials of the form $H^\bb$
give a basis for  $U(\g,e)_0/U(\g,e)_{0,\sharp} \cong U(\g_0,e)$.
\end{proof}

\begin{Theorem} \label{Tverma}
Given $\La\in{\mathcal L}$,
let $v_1,\dots,v_k$ be a basis for $V_\La$
and $\la$ be its $\t^e$-weight.
\begin{enumerate}
\item[{\rm (1)}] 
The vectors $\{F^{\mathbf a} \otimes  v_i\:|\:\mathbf a \in \Z^m_{\geq 0},
1 \leq i \leq k\}$ give a basis of $M(\La,e)$.
\item[{\rm (2)}] The weight $\la$ is the unique maximal weight of $M(\La,e)$
in the dominance ordering,
$M(\La,e)$ is generated by the maximal weight space
$M(\La,e)_\la$,
and $M(\La,e)_\la \cong V_\La$ as $U(\g_0,e)$-modules.
\item[{\rm (3)}] The module $M(\La,e)$ is a universal highest weight module of
type $\La$.
\item[{\rm (4)}] There is a unique maximal proper submodule
$R(\La,e)$ in 
$M(\La,e)$,
$$
L(\La,e):=M(\La,e)/R(\La,e)
$$
is irreducible, and
$
\{L(\La,e)\:|\: \La\in {\mathcal L}\}$
is a complete set of pairwise inequivalent 
irreducible highest weight modules over $U(\g,e)$.
\end{enumerate}
\end{Theorem}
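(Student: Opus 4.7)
The plan is to treat the four parts in order, using the bimodule description of the Verma module together with Theorem~\ref{csa}, Theorem~\ref{lif} and Lemma~\ref{LRBasis}.

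For (1), Lemma~\ref{LRBasis} already tells us that $U(\g,e)/U(\g,e)_\sharp$ is free as a right $U(\g_0,e)$-module with basis $\{F^\aa\}$, so the tensor product $M(\La,e) = (U(\g,e)/U(\g,e)_\sharp)\otimes_{U(\g_0,e)} V_\La$ has the claimed basis $\{F^\aa\otimes v_i\}$.  For (2), I would first compute the weight of each basis vector.  Using the commutator rule $[\theta(t),u] = t\cdot u$ from Theorem~\ref{lif}(1), one sees that $\theta(t)$ passes through $F^\aa$ at the cost of scalar $-\sum_i a_i\ga_i(t)$; on the generator $v_i\in V_\La$, the action of $\theta(t)$ factors through $\pi_{-\ga}$, and by Lemma~\ref{bed} this is the action of $t-\de(t)$, giving $(\la(t)-\de(t))v_i$.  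Putting this together, $F^\aa\otimes v_i$ has $\t^e$-weight $\la-\sum_i a_i\ga_i$, which is $\leq \la$ in the dominance order with equality only if $\aa=\mathbf 0$.  Thus $\la$ is the unique maximal weight, $M(\La,e)_\la$ is spanned by the $1\otimes v_i$'s, and the natural map $V_\La\to M(\La,e)_\la,\ v\mapsto 1\otimes v$ is a $U(\g_0,e)$-module isomorphism (injectivity from (1); $U(\g_0,e)$-linearity from the tensor product over $U(\g_0,e)$ and the definition of the $U(\g_0,e)$-action on $M(\La,e)_\la$ via $\pi_{-\ga}$).  That $M(\La,e)_\la$ is a maximal weight space follows because every generator $E_j$ of $U(\g,e)_\sharp$ has zero image in $U(\g,e)/U(\g,e)_\sharp$, so $E_j\cdot(1\otimes v_i)=0$.

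For (3), suppose $V$ is another highest weight module of type $\La$, generated by a maximal weight space $V_{\la'}$, and $f:V_\La\stackrel{\sim}{\to} V_{\la'}$ is a $U(\g_0,e)$-module isomorphism.  From the proof of (2) the action of $\theta(t)$ on $V_{\la'}$ agrees with $\la'-\de$, so comparing with $V_\La$ via $f$ forces $\la'=\la$.  The universal property of the tensor product, together with the fact that $U(\g,e)_\sharp\cdot V_{\la'}=0$ (since $V_{\la'}$ is maximal), produces a unique $U(\g,e)$-module map $\tilde f:M(\La,e)\twoheadrightarrow V$ extending $f$; surjectivity is automatic because $V$ is generated by $V_{\la'}$.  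In particular this shows $\la$ is the unique maximal weight of any highest weight module of type $\La$, justifying the uniform terminology used in the statement.

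For (4), I would take $R(\La,e)$ to be the sum of all proper submodules of $M(\La,e)$.  Because any submodule $N$ is $\t^e$-stable (via $\theta+\de$), the intersection $N\cap M(\La,e)_\la$ is a $U(\g_0,e)$-submodule of $M(\La,e)_\la\cong V_\La$; by irreducibility of $V_\La$ it is either $0$ or all of $M(\La,e)_\la$, and in the latter case $N$ contains the generators and $N=M(\La,e)$.  So every proper submodule meets $M(\La,e)_\la$ trivially, hence so does their sum $R(\La,e)$, which is therefore proper.  This makes $L(\La,e)=M(\La,e)/R(\La,e)$ the unique irreducible quotient.  Any irreducible highest weight module of type $\La$ is a quotient of $M(\La,e)$ by (3) and must equal $L(\La,e)$ by uniqueness; conversely the top weight space of $L(\La,e)$ recovers $V_\La$, so $\La\mapsto L(\La,e)$ is a bijection onto isomorphism classes.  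The main point requiring care throughout is bookkeeping of the shifts $\de$ and $\ga$ in Theorem~\ref{csa} and Lemma~\ref{bed}, which is what ties the $\t^e$-weight on $M(\La,e)$ to the intrinsic $\t^e$-action on $V_\La$ coming from $U(\g_0,e)\supset\t^e$; once that compatibility is in place, every step is a formal consequence of the PBW decomposition of $U(\g,e)/U(\g,e)_\sharp$.
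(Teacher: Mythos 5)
Your proposal is correct and follows essentially the same route as the paper: part (1) from Lemma~\ref{LRBasis}, part (2) by computing the $\t^e$-weight $\la-\sum_i a_i\ga_i$ of each PBW basis vector (with the same bookkeeping of the $\delta$- and $\gamma$-shifts via Lemma~\ref{bed}), part (3) by adjointness of tensor and hom, and part (4) by observing that every proper submodule lies in $\bigoplus_{\mu<\la}M(\La,e)_\mu$. The only differences are cosmetic: you spell out the weight computation and the verification that $U(\g,e)_\sharp$ kills $1\otimes V_\La$ in more detail than the paper does.
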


\begin{proof}
(1)
This is clear from Lemma~\ref{LRBasis}.

(2)
The basis vector $F^{\mathbf a} \otimes v_i$
is of weight $\la - \sum_i a_i \ga_i$.
Hence the $\la$-weight space of $M(\La,e)$ is $1 \otimes V_\La$
and all other weights of $M(\La,e)$ are strictly smaller in the dominance
ordering. 

(3) By (1)--(2) $M(\La,e)$ is a highest weight module of type $\La$.
Let $V$ be another highest weight module generated
by a maximal weight space $V_\mu$ and $f:V_\La \rightarrow V_\mu$
be a $U(\g_0,e)$-module isomorphism. By comparing $\t^e$-actions
we get that $\mu = \la$.
By adjointness of tensor and hom
$f$ extends uniquely to a $U(\g,e)$-module homomorphism
$\tilde f: M(\La,e) \rightarrow V$
such that $\tilde f(1 \otimes v_i) = f(v_i)$ for each $i$. 
As $\tilde f(1 \otimes V_\La) = f(V_\La)$ 
generates $V$, we get that $\tilde f$ is surjective.

(4) Let $N$ be a submodule of $M(\La,e)$.
Then $N$ is the direct sum of its $\t^e$-weight spaces.
If $N_\la \neq 0$ then $N_\la$ generates all of 
$1 \otimes V_\La$ as a $U(\g_0,e)$-module, 
hence it generates all of $M(\La,e)$ as a $U(\g,e)$-module.
This shows that if $N$ is a proper submodule then 
it is contained in $\bigoplus_{\mu < \la} M(\La,e)_\mu$.
Hence the sum of all proper submodules of $M(\La,e)$ is still a proper
submodule, so $M(\La,e)$ has a unique maximal submodule $R(\La,e)$ as claimed.
By (3) any irreducible highest weight module $V$ of type $\La$
is a quotient of $M(\La,e)$, hence 
$V \cong 
L(\La,e)$. Moreover $\la$ is the unique maximal weight of 
$L(\La,e)$ by (2) and $L(\La,e)_\la \cong V_\La$ as $U(\g_0,e)$-modules.
Hence $\La$ is uniquely determined by $V$.
\end{proof}

\begin{Corollary}\label{nouse}
Let $\mathcal L^+ := \{\La \in \mathcal L\:|\:
\dim L(\La,e) < \infty\}$. Then the modules
$\{L(\La,e)\:|\:\La \in \mathcal L^+\}$ give a complete set of
pairwise inequivalent finite dimensional
irreducible $U(\g,e)$-modules.
\end{Corollary}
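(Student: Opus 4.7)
The plan is to reduce the corollary to Theorem~\ref{Tverma} by showing that every finite dimensional irreducible $U(\g,e)$-module $V$ is isomorphic to $L(\La,e)$ for some $\La\in\mathcal L$; such a $\La$ then automatically lies in $\mathcal L^+$. The finite dimensionality, irreducibility, and pairwise inequivalence of the $L(\La,e)$ for $\La\in\mathcal L^+$ are immediate from Theorem~\ref{Tverma}(4) together with the definition of $\mathcal L^+$. Since $L(\La,e)$ is the unique irreducible quotient of $M(\La,e)$, it will suffice to produce a nonzero $U(\g,e)$-homomorphism $M(\La,e)\to V$ for some $\La\in\mathcal L$: it will then be surjective by irreducibility of $V$, forcing $V\cong L(\La,e)$.

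To construct such a map I will decompose $V$ into generalized weight spaces $V=\bigoplus_\mu V^\mu$ for the commuting family $\{(\theta+\delta)(t)\mid t\in\t^e\}$; only finitely many summands are nonzero since $V$ is finite dimensional. Choose $\la$ maximal in the dominance order among those $\mu$ with $V^\mu\neq\{0\}$. By Theorem~\ref{lif}(1) $\theta(\t^e)$ is central in $U(\g,e)_0$ (this being the zero weight space for the dot action), so $V^\la$ is $U(\g,e)_0$-stable; moreover, for each generator $E_i$ of $U(\g,e)_\sharp$ of weight $\ga_i\in\Phi^e_+$, the relation $\theta(t)E_i=E_i\theta(t)+\ga_i(t)E_i$ gives $E_iV^\la\subseteq V^{\la+\ga_i}=\{0\}$ by maximality, whence $U(\g,e)_\sharp V^\la=\{0\}$. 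Consequently the $U(\g,e)_0$-action on $V^\la$ factors through the map $\pi_{-\ga}$ of Theorem~\ref{csa} to make $V^\la$ into a finite dimensional $U(\g_0,e)$-module, in which I pick any irreducible $U(\g_0,e)$-submodule $W$ and let $\La\in\mathcal L$ satisfy $W\cong V_\La$. Because $U(\g,e)_\sharp W=\{0\}$ and $W\cong V_\La$ as $U(\g_0,e)$-modules, the universal property of the balanced tensor product $M(\La,e)=(U(\g,e)/U(\g,e)_\sharp)\otimes_{U(\g_0,e)}V_\La$ then delivers the required nonzero $U(\g,e)$-map $M(\La,e)\to V$.

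The main obstacle will be reconciling the generalized eigenspace $V^\la$ with the actual weight space featuring in the definition of Verma modules: the universal property above really sends $1\otimes V_\La$ into $V_{\la_\La}$, so consistency with $W\subseteq V^\la$ requires $W\subseteq V_\la$ and $\la_\La=\la$. Since $\t^e$ lies in the centre of $\g_0$, hence of $U(\g_0,e)$, Schur's lemma forces $\t^e$ to act on the irreducible $W$ by a single character $\la_\La$; the identity $\pi_{-\ga}(\theta(t)+\delta(t))=t$ extracted from Lemma~\ref{bed} identifies this action with that of $(\theta+\delta)(t)$ inherited from $V$, so the scalar $\la_\La(t)$ must agree with the generalized eigenvalue $\la(t)$ on $V^\la$, as needed.
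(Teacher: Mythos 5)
Your proof is correct and follows essentially the same route as the paper's: locate a maximal $\t^e$-weight of the finite dimensional module, observe that the corresponding space is killed by $U(\g,e)_\sharp$, pick an irreducible $U(\g_0,e)$-submodule of it, and conclude via Theorem~\ref{Tverma}. The only difference is that you carefully justify two points the paper's three-line argument takes for granted — that one may start from generalized $(\theta+\delta)(\t^e)$-eigenspaces and deduce genuine semisimplicity over $\t^e$ from Schur's lemma applied to the central $\t^e\subseteq U(\g_0,e)$, and that the surjection $M(\La,e)\to V$ exists even when the full maximal weight space is not itself irreducible over $U(\g_0,e)$ — which is a welcome tightening rather than a departure.
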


\begin{proof}
Any finite dimensional irreducible $U(\g,e)$-module $L$
has a maximal weight $\la$.
Moreover by irreducibility $L$ is generated by any
irreducible $U(\g_0,e)$-submodule of $L_\la$. Hence $L$
is an irreducible highest weight module. Now apply Theorem~\ref{Tverma}(4).
\end{proof}

Unfortunately we have absolutely no idea how to give
an explicit combinatorial
parametrization
$\{V_\La\:|\:\La \in \mathcal L\}$
of a complete set of pairwise inequivalent
finite dimensional irreducible
$U(\g_0,e)$-modules 
unless the distinguished nilpotent element $e$ of $\g_0$
is actually regular.
We will discuss the regular case in more detail in $\S$5.1.

\subsection{Central characters}\label{scharacters}
Let $Z(\g)$ denote the center of $U(\g)$ and
$Z(\g,e)$ denote the center of $U(\g,e)$.
It is easy to see that the restriction of the linear map
$\Pr$ from $\S$2
defines an injective algebra homomorphism
$\Pr:Z(\g) \hookrightarrow Z(\g,e)$.
As explained in 
the footnote to \cite[Question 5.1]{P2},
this map is also surjective, so it is an
algebra isomorphism
$$
\Pr:Z(\g) \stackrel{\sim}{\rightarrow} Z(\g,e).
$$
A $U(\g,e)$-module $V$ is of
{\em central character} $\psi:Z(\g) \rightarrow \C$ if 
$\Pr(z) v = \psi(z) v$ for all $z \in Z(\g)$ and $v \in V$.
Analogously for $\g_0$ we have the isomorphism
$$
\Pr_0:Z(\g_0) \stackrel{\sim}{\rightarrow} Z(\g_0,e),
$$
and we say that a $U(\g_0,e)$-module $V$ is of
central character $\psi_0:Z(\g_0) \rightarrow \C$
if $\Pr_0(z) v = \psi_0(v)$ for all $z \in Z(\g_0)$ and $v \in V$.
We want to relate the central character of an irreducible
highest weight module over $U(\g,e)$  to
the central character of its maximal weight space over
$U(\g_0,e)$. We remark that
the surjectivity of the map $\Pr$ will not be used in any
of the arguments below 
but the surjectivity of $\Pr_0$ is essential;
if $e$ is regular in $\g_0$ then the surjectivity of $\Pr_0$
is already clear from \cite[$\S$2]{Kostant}.

Let $\Phi$ (resp.\ $\Phi_0$) denote the root system of $\g$ 
(resp.\ $\g_0$) with respect to $\t$
and $W$ (resp.\ $W_0$) be the corresponding Weyl group.
Let $\Phi_{\pm} := \left\{\alpha \in \Phi\:\big|\:\alpha|_{\t^e} \in \Phi^e_{\pm}\right\}$
so that 
$$
\Phi = \Phi_- \sqcup \Phi_0 \sqcup \Phi_+,
$$
corresponding to the decomposition $\g = \g_- \oplus \g_0 \oplus \g_+$.
We stress that $\Phi_+$ is {\em not} a positive system of roots in $\Phi$;
we reserve the notation $\Phi^+$ for such a thing below.
For each $\alpha \in \Phi$, fix a non-zero vector
$x_\alpha$ in the $\alpha$-root space of $\g$
and let $\alpha^\vee := 2 \alpha / (\alpha|\alpha) \in \t^*$.

Now we define Harish-Chandra isomorphisms
$$
\Psi:Z(\g) \stackrel{\sim}{\rightarrow} S(\t)^W,
\qquad
\Psi_0:Z(\g_0)\stackrel{\sim}{\rightarrow} S(\t)^{W_0}
$$
for $\g$ and $\g_0$ as follows.
Let $\Phi^+$ be any system of positive roots in $\Phi$ and 
set $\Phi_0^+ := \Phi^+ \cap \Phi_0$, which is 
a system of positive roots
in $\Phi_0$.
Set
$$
\rho := {\textstyle\frac{1}{2}}\sum_{\alpha \in \Phi^+} \alpha,
\qquad
\rho_0 :=
{\textstyle\frac{1}{2}}
\sum_{\alpha \in \Phi_+} \alpha
+ 
 {\textstyle\frac{1}{2}}
\sum_{\alpha \in \Phi_0^+} \alpha.
$$
The first term on the right hand side of the
definition of $\rho_0$ is orthogonal to all the roots in 
$\Phi_0$; it should be viewed as 
a shift in origin for the definition of $\Psi_0$.
Then, the Harish-Chandra isomorphisms
$\Psi$ and $\Psi_0$ are determined on $z \in Z(\g)$ and $z_0 \in Z(\g_0)$
by the equations
\begin{align*}
z &\equiv S_\rho(\Psi(z)) \:\;\;\pmod{\textstyle\sum_{\alpha \in \Phi^+} U(\g) x_\alpha},
\\
z_0 &\equiv S_{\rho_0}(\Psi_0(z)) \pmod{\textstyle\sum_{\alpha \in \Phi_0^+} U(\g_0) x_\alpha}.
\end{align*}
Although the definitions of $\Psi$ and $\Psi_0$ 
involve the choice of $\Phi^+$,
it is known by \cite[7.4.5]{Dix} that 
the Harish-Chandra isomorphisms $\Psi$ and $\Psi_0$ 
are actually independent of this choice
(though $\Psi_0$ does depend on the choice of $\Phi^e_+$ because of the
shift of origin in defining $\rho_0$).

\begin{Theorem}\label{ed}
There is a unique embedding
$c:Z(\g) \hookrightarrow Z(\g_0)$ such that
the following diagram commutes:
$$
\begin{CD}
Z(\g,e)@<\Pr <<Z(\g)@>\Psi >>S(\t)^W\\
@V\pi_{-\gamma} VV@VVc V@VV\iota V\\
Z(\g_0,e)@<\Pr_0 <<Z(\g_0)@>\Psi_0 >>S(\t)^{W_0}
\end{CD}
$$
Here,
 $\iota:S(\t)^W \hookrightarrow S(\t)^{W_0}$ 
denotes the natural inclusion.
\end{Theorem}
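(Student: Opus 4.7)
The map $c := \Psi_0^{-1} \circ \iota \circ \Psi$ is uniquely determined by requiring that the right-hand square commute, and is injective since $\iota$ is; so the substantive content is the commutativity of the left square, i.e.\ the identity $\pi_{-\gamma}(\Pr(z)) = \Pr_0(c(z))$ in $Z(\g_0,e)$ for every $z \in Z(\g)$. Both sides are indeed central in $U(\g_0,e)$: for $\pi_{-\gamma}(\Pr(z))$ this follows from $\Pr(z) \in Z(\g,e) \subseteq U(\g,e)_0$ together with the fact that $\pi_{-\gamma}: U(\g,e)_0 \twoheadrightarrow U(\g_0,e)$ is a surjective algebra homomorphism, which carries the center into the center.

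The first key step is to choose a positive system $\Phi^+$ of $\Phi$ with $\Phi^+ \supseteq \Phi_+$, giving the splitting $\Phi^+ = \Phi_0^+ \sqcup \Phi_+$ where $\Phi_0^+ := \Phi^+ \cap \Phi_0$. A short direct computation using the definitions of $\rho$ and $\rho_0$ then yields the identity $\rho = \rho_0$; the extra term $\tfrac{1}{2}\sum_{\alpha \in \Phi_+}\alpha$ built into $\rho_0$ is precisely engineered to produce this cancellation. Since $\Pr_0$ is an isomorphism, it is equivalent to prove that $\tilde c := \Pr_0^{-1} \circ \pi_{-\gamma} \circ \Pr$ coincides with $c$ as algebra homomorphisms $Z(\g) \to Z(\g_0)$.

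To establish $\tilde c = c$, I would evaluate both sides on an arbitrary finite dimensional irreducible $V_\La$ of $U(\g_0, e)$, indexed by $\La \in \mathcal L$. The Verma module $M(\La, e)$ is cyclic with maximal weight space $V_\La$, so $\Pr(z) \in Z(\g,e)$ acts on all of $M(\La, e)$ as a single scalar by a Schur/Dixmier argument, and this scalar equals the eigenvalue by which $\pi_{-\gamma}(\Pr(z)) \in Z(\g_0,e)$ acts on $V_\La$. On the other side, $\Pr_0(c(z))$ acts on $V_\La$ as $\Psi_0(c(z))(\mu_\La + \rho_0) = \Psi(z)(\mu_\La + \rho_0) = \Psi(z)(\mu_\La + \rho)$ using $\rho = \rho_0$, where $\mu_\La \in \t^*$ parametrizes the $Z(\g_0)$-central character of $V_\La$. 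Provided the same $\mu_\La$ parametrizes the $Z(\g)$-central character of $M(\La, e)$, the two scalars coincide for every $\La$; since central characters of irreducible $U(\g_0, e)$-modules separate elements of $Z(\g_0, e) \cong Z(\g_0) \cong S(\t)^{W_0}$, this forces $\tilde c(z) = c(z)$.

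The main obstacle is this compatibility claim: that the single weight $\mu_\La$ simultaneously governs both the $Z(\g_0)$-central character of $V_\La$ and the $Z(\g)$-central character of $M(\La, e)$, with Harish-Chandra normalisations reconciled via $\rho = \rho_0$. This can be established either by appealing to Skryabin's equivalence, under which $M(\La, e)$ corresponds to a parabolically induced Whittaker $U(\g)$-module whose central character is computed via the classical parabolic Harish-Chandra projection $U(\g) \twoheadrightarrow U(\g_0)$, or by a direct PBW calculation tracking $\Pr(z)$ on the generator of $M(\La, e) = (U(\g,e)/U(\g,e)_\sharp) \otimes_{U(\g_0,e)} V_\La$ using the fact that all error terms from the Harish-Chandra presentation of $z$ involving root vectors $x_\alpha$ for $\alpha \in \Phi_+$ are annihilated by the composite $\pi_{-\gamma} \circ \Pr$.
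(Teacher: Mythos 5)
Your reduction is sound as far as it goes: $c:=\Psi_0^{-1}\circ\iota\circ\Psi$ is forced by the right-hand square, and since $\Psi$ and $\Psi_0$ are independent of the choice of positive system you may indeed take $\Phi^+\supseteq\Phi_+$, whence $\rho=\rho_0$. But the argument stops exactly where the content of the theorem lies. The scalar by which $\Pr(z)$ acts on $M(\La,e)$ is, by the very definition of the $U(\g_0,e)$-action on a maximal weight space, the scalar by which $\pi_{-\gamma}(\Pr(z))$ acts on $V_\La$; so your ``first side'' is tautologically $\psi_0^\La(\tilde c(z))$. To evaluate the same scalar as $\Psi(z)(\mu_\La+\rho)$ you need an independent computation, and that is precisely the ``compatibility claim'' you flag without proving. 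Neither suggested route closes it: the Skryabin route requires identifying $Q_\lag\otimes_{U(\g,e)}M(\La,e)$ with a parabolically induced Whittaker module, which is essentially Conjecture~\ref{c2} of the paper and is not available; and the ``direct PBW calculation'' is not routine, because with your choice of $\Phi^+$ the vectors $x_{-\alpha}$ for $\alpha\in\Phi_+$ split between $\p$ and $\m$ according to their degree in the good grading, so computing $\Pr$ forces a reordering of the Harish-Chandra presentation of $z$ whose commutator terms produce exactly a shift by $\gamma$ --- the bookkeeping you suppressed by arranging $\rho=\rho_0$ reappears there and must be checked to cancel against the $S_{-\gamma}$ in $\pi_{-\gamma}$.

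There is a second gap in the separation step. Your strategy only tests $\tilde c$ and $c$ against the central characters $\psi_0^\La$ of \emph{finite dimensional} irreducible $U(\g_0,e)$-modules (the Verma construction requires $V_\La$ finite dimensional), so to conclude $\tilde c=c$ you need these characters to be Zariski dense in $\operatorname{Spec}S(\t)^{W_0}$. This is clear when $e$ is regular in $\g_0$, but for a general distinguished nilpotent $e$ in $\g_0$ it is a nontrivial fact resting on Premet--Losev existence theorems, and you do not justify it. The paper avoids both issues by a purely algebraic argument: it chooses $\Phi^+$ adapted to the good grading (so that $\rho-\rho_0=\gamma$ rather than $0$), computes $\Pr_0(z_0)$ and $\Pr(z)$ explicitly in carefully ordered PBW bases, shows $z_0$ is recoverable from $\Pr_0(z_0)$ via the congruence $S_{\rho_0}(\Psi_0(z_0))\equiv\Pr_0(z_0)$ modulo an explicit complement, and then matches $\pi_{-\gamma}(\Pr(z))$ against this using $S_{-\gamma}\circ S_{\rho}=S_{\rho_0}$ and the surjectivity of $\Pr_0$.
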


\begin{proof}
Given $\sigma,\tau \in \{+,0,-\}$ let
$\Phi_\sigma(\tau)$ denote the set of all $\alpha \in \Phi_\sigma$
such that the degree of $x_\alpha$ in the good grading
is positive, zero or negative according to whether
$\tau = +, 0$ or $-$, respectively.
Note that $\Phi_\sigma(\tau) = - \Phi_{-\sigma}(-\tau)$,
in particular, $\Phi_0(0)$ is a closed subsystem of $\Phi$ (it is the root
system of the reductive Lie algebra $\h_0$).
Pick a system $\Phi^+_0(0)$ of positive
roots in $\Phi_0(0)$.
Then set
\begin{align*}
\Phi^+ &:= \Phi^+_0(0)\sqcup \Phi_+(0) \sqcup \Phi_+(-) \sqcup \Phi_0(-)
\sqcup \Phi_-(-),
\end{align*}
which is a system of positive roots in
$\Phi$.
Using this choice we define the weights
$\rho$, $\rho_0$ and the 
Harish-Chandra isomorphisms
$\Psi$, $\Psi_0$, as explained above.
So
\begin{align*}
\rho &= \textstyle
{\textstyle\frac{1}{2}} \sum
\left( \Phi^+_0(0)
\sqcup \Phi_+(0) \sqcup \Phi_+(-)
\sqcup \Phi_0(-)
\sqcup \Phi_-(-)\right),\\
\rho_0 &=\textstyle {\textstyle\frac{1}{2}} \sum 
\left(\Phi_+(+)
\sqcup \Phi_+(0)
\sqcup \Phi_+(-)
\sqcup \Phi^+_0(0)\sqcup \Phi_0(-)\right).
\end{align*}
Recalling the weight $\gamma$ from $\S$\ref{sCartan},
we deduce that
$$
\textstyle\rho - \rho_0 = {\textstyle\frac{1}{2}} \sum \Phi_-(-) - 
{\textstyle\frac{1}{2}}\sum \Phi_+(+)
=\sum \Phi_-(-) = \gamma.
$$

Now we need to fix some PBW bases.
Let $t_1,\dots,t_n$ be a basis for $\t$.
Also enumerate the elements of
$\Phi_0^+(0)$ as $\alpha_1,\dots,\alpha_s$,
the elements of $\Phi_0(-)$ as $\beta_1,\dots,\beta_q$,
the elements of $\Phi_+(-)\sqcup \Phi_-(-)$ as
$\beta_{q+1},\dots,\beta_r$,
and the elements of $\Phi_+(0)$ as $\nu_1,\dots,\nu_p$.
Order the basis
$\{t_i, x_\alpha\:|\:1 \leq i \leq n, \alpha \in \Phi\}$ of $\g$
so the $x_{-\beta_i}$ come first, then
the $x_{-\nu_i}$, then the $x_{-\alpha_i}$,
then the $t_i$, then the $x_{\alpha_i}$, then
the $x_{\nu_i}$, then the $x_{\beta_i}$.
This ordering determines a corresponding PBW basis for $U(\g)$.
Using the same ordering on the basis
$\{t_i,x_\alpha\:|\:1 \leq i \leq n, \alpha\in \Phi_0\}$ of $\g_0$
we also get a PBW basis for $U(\g_0)$.

We next calculate $\Pr_0(z_0)$
for any $z_0 \in Z(\g_0)$.
We can write
$$
z_0 = S_{\rho_0}(\Psi_0(z_0)) + u_0 + v_0
$$
where $S_{\rho_0}(\Psi_0(z_0)) \in  S(\t)$, $u_0$ 
is a linear combination of ordered
PBW monomials ending in  $x_{\alpha_i}\:(1 \leq i \leq s)$
and $v_0$ is a linear combination of ordered PBW monomials
ending in $x_{\beta_i}\:(1 \leq i \leq q)$.
As $z_0$ is central it is of degree $0$ in the good grading.
Hence $S_{\rho_0}(\Psi_0(z_0)), u_0$ and $v_0$ are all of degree $0$ too.
Note $u_0$  only involves products of the basis vectors 
$x_{\alpha_1},\dots,x_{\alpha_s}$ 
on the positive side, all of which are of degree $0$. 
Hence it can only involve products of the basis vectors
$x_{-\alpha_1},\dots,x_{-\alpha_s}$ on the negative side because
each $x_{-\beta_1},\dots,x_{-\beta_q}$ is of strictly positive degree.
Hence $u_0 \in U(\h_0)$ and $\Pr_0(S_{\rho_0}(\Psi_0(z_0)) + u_0) = 
S_{\rho_0}(\Psi_0(z_0)) + u_0$.
To compute $\Pr_0(v_0)$, 
our PBW monomials are ordered so that the basis vectors 
$x_{\beta_1},\dots,x_{\beta_q}$ for $\m_0$ appear on the right hand side,
so we simply replace each $x_{\beta_i}$ by the scalar $\chi(x_{\beta_i})$.
Since all $x_{\beta_i}$ are of strictly negative degree, it follows that
$\Pr_0(v_0)$ is of strictly positive degree.
We have shown that
$$
\Pr_0(z_0) = S_{\rho_0}(\Psi_0(z_0)) + u_0 + \Pr_0(v_0) \in
Z(\g_0,e) \subseteq U(\p_0)
$$
with $S_{\rho_0}(\Psi_0(z_0)) \in S(\t)$,
$u_0 \in \sum_{i=1}^s U(\h_0) x_{\alpha_i}$
and $\Pr_0(v_0) \in
\sum_{j > 0}
U(\p_0)(j)$.
In particular we see from this that 
$z_0$ can be recovered
uniquely from $\Pr_0(z_0)$: it is the unique
element of $Z(\g_0)$ such that
$$
S_{\rho_0}(\Psi_0(z_0)) \equiv \Pr_0(z_0) \pmod{\textstyle
\sum_{i=1}^s U(\h_0) x_{\alpha_i} + \sum_{j > 0} U(\p_0)(j)}.
$$

Instead take an element $z \in Z(\g)$.
We expand it as
$$
z = S_\rho(\Psi(z)) + t+ u + v
$$
where $S_\rho(\Psi(z)) \in S(\t)$, 
$t$ is a linear combination of ordered PBW monomials
ending in $x_{\alpha_i}\:(1 \leq i \leq s)$,
$u$ is a linear combination of ordered PBW monomials
ending in $x_{\nu_i}\:(1 \leq i \leq p)$,
and $v$ is a linear combination of ordered PBW monomials
ending in $x_{\beta_i}\:(1 \leq i \leq r)$.
We apply
the map $\Pr$ and argue just like in the previous paragraph
to get that
$$
\Pr(z) = S_\rho(\Psi(z)) + t+u + \Pr(v) \in Z(\g,e) \subseteq U(\widetilde\p)_0
$$
where $S_{\rho}(\Psi(z)) \in S(\t)$,
$t \in \sum_{i=1}^s U(\h) x_{\alpha_i}$, $u \in \sum_{i=1}^p U(\h) x_{\nu_i}$
and finally $\Pr(v) \in
U(\widetilde\p) \k^\ne+\sum_{j > 0}
U(\p)(j)$.
Next we apply the graded homomorphism $S_{-\gamma}\circ\pi:U(\widetilde\p)_0
\rightarrow U(\p_0)$ to this. It annihilates 
$U(\h) x_{\nu_i}$ and $U(\widetilde\p)_0 \cap U(\widetilde\p) \k^\ne$.
So we deduce recalling Theorem~\ref{csa} that
$$
\pi_{-\gamma}(\Pr(z)) = S_{-\gamma}(S_{\rho}((\Psi(z))) + w \in Z(\g_0,e)
$$
for $w \in \sum_{i=1}^s U(\h_0) x_{\alpha_i}
+ \sum_{j > 0} U(\p_0)(j)$.
Using the last sentence of the previous paragraph 
and the fact from 
\cite{P2}
that $\Pr_0: Z(\g_0) \hookrightarrow Z(\g_0,e)$ is surjective, 
we deduce that
$\pi_{-\gamma}(\Pr(z)) = \Pr_0(z_0)$
where $z_0$ is the unique element of $Z(\g_0)$
such that $S_{\rho_0}(\Psi_0(z_0)) = S_{-\gamma}(S_\rho(\Psi(z)))$.
Equivalently, by the first paragraph, $\Psi_0(z_0) = \Psi(z)$.

Now we can prove the theorem.
Since $\Psi$ and $\Psi_0$ are isomorphisms, there is obviously
a unique map $c$ so the right hand square commutes.
For the left hand square, we have shown 
for $z \in Z(\g)$ that
$\pi_{-\gamma}(\Pr(z)) = \Pr_0(z_0)$ where
$\Psi_0(z_0) = \Psi(z)$. This means that $z_0 = c(z)$
hence $\pi_{-\gamma}(\Pr(z)) = \Pr_0(c(z))$.
\end{proof}

For $\La \in \mathcal L$, Schur's lemma implies that
$Z(\g_0,e)$ acts diagonally on $\La$.
Let $\psi^\La_0:Z(\g_0) \rightarrow \C$ be the resulting central character,
i.e. $\Pr_0(z) v = \psi^\La_0(z) v$ for all $z \in Z(\g_0)$ and $v \in \La$.
Let $\psi^\La:Z(\g) \rightarrow \C$ denote $\psi^\La_{0} \circ c$.

\begin{Corollary}\label{hascc}
Let $V$ be a highest weight $U(\g,e)$-module of type $\La \in \mathcal L$.
Then $V$ is of central character $\psi^\La: Z(\g) \rightarrow \C$.
\end{Corollary}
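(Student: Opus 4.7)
The plan is to reduce everything to Theorem~\ref{ed} by observing that all the hard work has already been done there. First I note that for any $z \in Z(\g)$, the element $\Pr(z)$ lies in $Z(\g,e)$ and therefore commutes with the entire action on $V$; in particular it is an endomorphism of the $U(\g,e)$-module $V$. Since $V$ is generated as a $U(\g,e)$-module by its maximal weight space $V_\la$, it suffices to show that $\Pr(z)$ acts on $V_\la$ as the scalar $\psi^\La(z)$.

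Next, because $\Pr(z)$ is central it has $\t^e$-weight zero, i.e. $\Pr(z) \in U(\g,e)_0$. By the discussion in $\S$\ref{sVerma}, the action of $U(\g,e)_0$ on the maximal weight space $V_\la$ factors through the surjection $\pi_{-\gamma}:U(\g,e)_0 \twoheadrightarrow U(\g_0,e)$ from Theorem~\ref{csa}, and this $U(\g_0,e)$-module structure is isomorphic to $V_\La$ by definition of type $\La$. Hence $\Pr(z)$ acts on $V_\la$ via multiplication by $\pi_{-\gamma}(\Pr(z)) \in U(\g_0,e)$, transported through the isomorphism $V_\la \cong V_\La$.

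Now I invoke Theorem~\ref{ed}, whose left-hand square gives precisely
$$
\pi_{-\gamma}(\Pr(z)) = \Pr_0(c(z)).
$$
Since $\Pr_0(c(z)) \in Z(\g_0,e)$ acts on the irreducible $U(\g_0,e)$-module $V_\La$ as the scalar $\psi^\La_0(c(z)) = \psi^\La(z)$ by definition of $\psi^\La_0$ and $\psi^\La$, we conclude that $\Pr(z)$ acts on $V_\la$ as $\psi^\La(z)$. Combined with the first paragraph this finishes the proof.

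There is really no serious obstacle; Theorem~\ref{ed} was designed exactly to make this corollary automatic. The only subtle points are the two already-established facts I am relying on: that $\Pr(z)$ has $\t^e$-weight zero (immediate from centrality), and that $U(\g,e)_0$ acts on maximal weight spaces via $\pi_{-\gamma}$ (which is exactly the motivation given in $\S$\ref{sVerma} for the shift by $\delta$ in the definition of weight spaces).
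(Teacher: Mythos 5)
Your argument is correct and follows the same route as the paper's own proof: restrict $\Pr(z)$ to the generating maximal weight space $V_\la\cong V_\La$, use that the $U(\g,e)_0$-action there factors through $\pi_{-\gamma}$, and apply the left-hand square of Theorem~\ref{ed} to identify $\pi_{-\gamma}(\Pr(z))$ with $\Pr_0(c(z))$, which acts by $\psi_0^\La(c(z))=\psi^\La(z)$. Your preliminary observation that $\Pr(z)$ lies in $U(\g,e)_0$ (so that $\pi_{-\gamma}$ applies) is a point the paper leaves implicit, and it is correctly justified by centrality.
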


\begin{proof}
Say $V$ is generated by its maximal weight space $V_\la$
and $f:V_\La \rightarrow V_\la$ is an isomorphism
of $U(\g_0,e)$-modules.
For $z \in Z(\g)$ and $v \in V_\La$, the theorem implies that
\begin{align*}
\Pr(z)f(v) &= f(\pi_{-\gamma}(\Pr(z)) v) = 
f(\Pr_0(c(z)) v)= \psi_0^\La(c(z)) f(v).
\end{align*}
Hence $\Pr(z)$ acts on $V_\la$ as the scalar
$\psi^\La(z)$. Since $V_\la$ generates $V$ as
a $U(\g,e)$-module it follows that $\Pr(z)$ acts on all of $V$
as $\psi^\La(z)$. Hence $V$ has central character
$\psi^\La$.
\end{proof}

\begin{Remark}\label{vary}\rm
Given in addition a weight $o \in \t^*$
orthogonal to all the roots in
$\Phi$, one can modify the above definitions of $\rho$ and
$\rho_0$ by adding $o \in \t^*$ to them both (``change of origin''). Providing
one also adds $o$ to the weight $\epsilon$ defined in
$\S$\ref{sLevi} below, all our subsequent results remain true as stated.
The point is that although adding $o$
changes the Harish-Chandra isomorphisms $\Psi$ and
$\Psi_0$, hence also the parametrization of
central characters, it does not affect the
maps $c$ or $\iota$ in Theorem~\ref{ed}.
\end{Remark}

\subsection{\boldmath Category $\mathcal O(e)$}\label{sO}
We want to prove that Verma modules have finite length.
This is not a hard result if $e$ is regular in $\g_0$, but
to prove it in general we need to appeal to some recent results of Losev.
We first recall a little more of the background for this.
Let $\lag$ be a Lagrangian subspace of $\k$ (for example
a natural choice is
$\lag := \k_+ = \bigoplus_{\alpha \in \Phi^e_+} \k_\alpha$).
Define the left $U(\g)$-module $Q_\lag = U(\g) / I_\lag$ and the algebra
$Q_\lag^{\m\oplus\lag}$ as in Remark~\ref{premets}.
It is obvious from the definition of $Q_\lag^{\m\oplus\lag}$
that there is a well-defined 
multiplication map
$$
Q_\lag \otimes Q_\lag^{\m\oplus\lag} \rightarrow Q_\lag,\quad
(u+I_\lag)\otimes (v+I_\lag) \mapsto uv+I_\lag
$$
making $Q_\lag$ into a $(U(\g), Q_\lag^{\m\oplus\lag})$-bimodule.
Identifying $Q_\lag^{\m\oplus\lag}$ with
$U(\g,e)$ using the isomorphism $\nu$ from Remark~\ref{premets}
and the isomorphism from Theorem~\ref{t1}, this makes $Q_\lag$ into
a $(U(\g), U(\g,e))$-bimodule too.

Let $\mathcal C(e)$ denote the category of all left
$U(\g,e)$-modules.
Let $\mathcal W(e)$ denote the category of all $\g$-modules
on which $x - \chi(x)$ acts locally nilpotently for all $x \in \m\oplus\lag$.
Note $Q_\lag$ belongs to $\mathcal W(e)$, hence tensoring with
this bimodule defines a functor
$$
Q_\lag \otimes_{U(\g,e)}? : \mathcal C(e)
\rightarrow \mathcal W(e).
$$
The important {\em Skryabin's theorem} asserts that
this functor is an equivalence of categories;
see \cite{Skryabin} or \cite[Theorem 6.1]{GG}.

Hence if $L$ is any irreducible $U(\g,e)$-module then
$Q_\lag \otimes_{U(\g,e)} L$ is an irreducible $U(\g)$-module,
and its annihilator
$\ann_{U(\g)}(Q_\lag \otimes_{U(\g,e)} L)$
is a primitive ideal of $U(\g)$.
For any primitive ideal $P$ of $U(\g)$, we let
$\VA(P) \subseteq \g$ denote its associated variety;
see e.g. \cite[$\S$9]{Ja2}.
It is known that
$\VA(P)$ is the closure of a single nilpotent orbit in $\g$;
see \cite[3.10]{Joseph}.
By \cite[Theorem 3.1]{P2} and \cite[Theorem 1.2.2(ii),(ix)]{Losev}, it is known for any irreducible $U(\g,e)$-module $L$ that
$$
\VA(\ann_{U(\g)}(Q_\lag \otimes_{U(\g,e)} L))
 \supseteq \overline{G \cdot e}
$$
with equality if and only if $L$ is finite dimensional.

\begin{Theorem}\label{finiteness}
The number of isomorphism classes of irreducible highest weight
modules for $U(\g,e)$ with prescribed central character $\psi:Z(\g) \rightarrow \C$ 
is finite, i.e. the set
$\{\La \in \mathcal L\:|\:\psi^\La = \psi\}$ 
is finite.
\end{Theorem}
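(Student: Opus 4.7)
The plan is to reduce the finiteness question to two simpler finiteness statements: one concerning the central characters of $U(\g_0)$ lying over a given central character of $U(\g)$, and one concerning the finite dimensional irreducible $U(\g_0,e)$-modules with a given central character.

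By Corollary~\ref{hascc}, the central character $\psi^\La$ of $L(\La,e)$ factors as $\psi_0^\La \circ c$, where $\psi_0^\La$ denotes the $Z(\g_0)$-central character of $V_\La$. Hence it suffices to establish (i) for any $\psi$, only finitely many algebra homomorphisms $\psi_0: Z(\g_0) \rightarrow \C$ satisfy $\psi_0 \circ c = \psi$; and (ii) for any such $\psi_0$, only finitely many $\La \in \mathcal L$ satisfy $\psi_0^\La = \psi_0$.

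For (i), the key input is Theorem~\ref{ed}, which identifies $c$ with the natural inclusion $\iota: S(\t)^W \hookrightarrow S(\t)^{W_0}$. Under the standard parametrization, algebra homomorphisms $S(\t)^W \rightarrow \C$ (resp.\ $S(\t)^{W_0} \rightarrow \C$) correspond to $W$-orbits (resp.\ $W_0$-orbits) in $\t^*$, and the map dual to $\iota$ is the natural projection $\t^*/W_0 \twoheadrightarrow \t^*/W$ coming from the inclusion $W_0 \subseteq W$. Since $W$ is finite, every $W$-orbit is a finite union of $W_0$-orbits, giving (i).

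For (ii), I would use the fact that $e$ is distinguished in $\g_0$ and apply Skryabin's equivalence attached to $\g_0$ as described before the theorem. Each finite dimensional irreducible $V_\La$ is then sent to an irreducible $U(\g_0)$-module of central character $\psi_0^\La$, whose annihilator $P_\La$ is a primitive ideal of $U(\g_0)$ with $\VA(P_\La) = \overline{G_0 \cdot e}$ by \cite[Theorem 3.1]{P2} and \cite[Theorem 1.2.2]{Losev}. Moreover, by \cite[Theorem 1.2.2]{Losev}, the fibres of the resulting map $\La \mapsto P_\La$ are finite. Combining this with the classical fact that $U(\g_0)$ has only finitely many primitive ideals of any given central character (a consequence of Duflo's theorem, together with the finiteness of the dot-orbit of any weight under $W_0$) then yields (ii). The principal obstacle is the fibre finiteness in step (ii), which is exactly the substantive recent result of Losev; the remaining ingredients are essentially formal.
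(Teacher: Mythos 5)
Your proposal is correct and follows essentially the same route as the paper: reduce via Corollary~\ref{hascc} and the fact that each $W$-orbit in $\t^*$ splits into finitely many $W_0$-orbits, then handle the distinguished case by combining Skryabin's equivalence, the finite-fibre statements of \cite[Theorem 1.2.2]{Losev} relating primitive ideals of the $W$-algebra to primitive ideals of the enveloping algebra, and the Duflo-type finiteness of primitive ideals with fixed central character. The only cosmetic difference is that the paper routes the argument through annihilators in $U(\g_0,e)$ itself (using that a finite dimensional irreducible is determined by its annihilator there) before passing to $U(\g_0)$, which is the precise content packaged in your assertion that $\La \mapsto P_\La$ has finite fibres.
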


\begin{proof}
By Corollary~\ref{hascc},
 $\psi^\La = \psi$
implies $\psi_0^\La = \psi_0$ for
some central character $\psi_0:Z(\g_0) \rightarrow \C$ such that $\psi_0\circ c = \psi$.
Each $W$-orbit in $\t^*$ is a union of finitely many
$W_0$-orbits, hence there are finitely many such $\psi_0$.
Therefore it suffices to prove for each $\psi_0:Z(\g_0) \rightarrow \C$
that the set
$\{\La \in \mathcal L\:|\:\psi_0^\La = \psi_0\}$
is finite.
In other words, replacing $\g$ by $\g_0$, we may assume that $e$ is
a distinguished nilpotent element in $\g$ 
and need to prove that the number of isomorphism classes
of {\em finite dimensional} irreducible $U(\g,e)$-modules with
fixed central character $\psi$ is finite. This statement is immediate
if $e$ is regular in $\g$ by \cite[$\S$2]{Kostant}.
In general we use \cite[Theorem 1.2.2]{Losev},
as follows.
The map sending $L$ to $\ann_{U(\g,e)}(L)$ induces a bijection between
isomorphism classes of finite dimensional irreducible $U(\g,e)$-modules
of central character $\psi$
and primitive ideals of $U(\g,e)$ of finite codimension
that contain $\Pr(\ker\psi)$.
So we just need to show there are finitely many such primitive 
ideals.
By \cite[Theorem 1.2.2(ii),(iii)]{Losev},
if $P = \ann_{U(\g,e)}(L)$ 
is a primitive ideal of $U(\g,e)$ containing $\Pr(\ker\psi)$
then $\ann_{U(\g)}(Q_\lag \otimes_{U(\g,e)} L)$ is a primitive
ideal of $U(\g)$ containing $\ker \psi$.
A well known consequence of Duflo's theorem \cite{Duflo}
is that there are only finitely many such primitive ideals
in $U(\g)$. Hence there are only finitely many possibilities for $P$
thanks to \cite[Theorem 1.2.2(vi),(vii)]{Losev}.
\end{proof}

\begin{Corollary}\label{flen}
For each $\La \in \mathcal L$, the Verma module $M(\La,e)$ has 
a composition series.
\end{Corollary}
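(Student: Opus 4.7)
The plan is to prove $M(\La,e)$ has finite length by bounding the length of every chain of submodules by a universal constant depending only on $\La$. The argument rests on two finiteness inputs: each weight space $M(\La,e)_\mu$ is finite dimensional, and only finitely many irreducibles $L(\La_1,e),\dots,L(\La_n,e)$ can appear as composition factors of $M(\La,e)$.

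For finite dimensionality of weight spaces, I would use the basis $\{F^\aa\otimes v_i\}$ from Theorem~\ref{Tverma}(1), which reduces the problem to showing $\{\aa\in\Z_{\geq 0}^m : \sum_j a_j\ga_j = \la-\mu\}$ is finite for each $\mu$. This follows because the parabolic $\q$ with Levi $\g_0$ is cut out by a cocharacter of $G$ in the center of $\g_0$; since such a cocharacter centralizes $e\in\g_0$ its derivative is an element $t\in\t^e$ with $\ga(t)>0$ for all $\ga\in\Phi^e_+$, which bounds each $a_j$. For the finite list of candidate composition factors I would combine Corollary~\ref{hascc} (every subquotient of $M(\La,e)$ inherits the central character $\psi^\La$ of $M(\La,e)$ itself) with Theorem~\ref{finiteness}, producing a finite set $\{\La_1,\dots,\La_n\}$ of possible labels. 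Write $\mu_i\in(\t^e)^*$ for the maximal weight of $L(\La_i,e)$.

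The heart of the argument is the following lemma: every $\leq$-maximal weight of every nonzero subquotient $V$ of $M(\La,e)$ lies in $\{\mu_1,\dots,\mu_n\}$. A $\leq$-maximal weight exists because the values $\nu\mapsto\nu(t)$ on the weights of $V$ are bounded above by $\la(t)$ and only finitely many of them occur in any bounded interval, so the supremum is attained by some $\nu$, and positivity of $t$ on $\Phi^e_+$ forces this $\nu$ to be $\leq$-maximal. For such a $\mu$ we have $U(\g,e)_\sharp V_\mu=0$, so $V_\mu$ is a finite dimensional $U(\g_0,e)$-module via the projection $\pi_{-\gamma}$ of Theorem~\ref{csa}. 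Picking an irreducible $U(\g_0,e)$-submodule $V_{\La'}\subseteq V_\mu$ and invoking Theorem~\ref{Tverma}(3), the $U(\g,e)$-submodule of $V$ generated by $V_{\La'}$ has irreducible quotient $L(\La',e)$; since this $L(\La',e)$ is itself a subquotient of $M(\La,e)$ one is forced to have $\La'=\La_i$ and $\mu=\mu_i$ for some $i$.

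With the lemma in hand, finite length is immediate: in any strict chain $0=V_0\subsetneq V_1\subsetneq\cdots\subsetneq V_r$ of submodules of $M(\La,e)$, the lemma applied to each quotient $V_{j+1}/V_j$ supplies an index $i_j$ with $\dim(V_{j+1})_{\mu_{i_j}}>\dim(V_j)_{\mu_{i_j}}$, so $j\mapsto\sum_{i=1}^n\dim(V_j)_{\mu_i}$ is strictly increasing in $j$ yet bounded above by $\sum_{i=1}^n\dim M(\La,e)_{\mu_i}<\infty$. Hence $r$ is uniformly bounded, a maximal such chain exists, and its successive quotients give the desired composition series. I expect the main obstacle to be the key lemma in the third paragraph, specifically the step forcing every $\leq$-maximal weight of every subquotient to land in the finite list $\{\mu_1,\dots,\mu_n\}$; everything else is either bookkeeping or a direct appeal to results already established in the paper.
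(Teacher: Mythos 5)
Your proof is correct and is precisely the ``standard argument in the classical case'' that the paper's proof invokes by reference to \cite[7.6.1]{Dix}, using exactly the same ingredients the paper lists (Corollary~\ref{hascc}, Theorem~\ref{Tverma} and Theorem~\ref{finiteness}); you have simply written out the details the paper leaves to the reader. No essential difference in approach.
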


\begin{proof}
Imitate the standard argument in the classical case from
\cite[7.6.1]{Dix}, using Corollary~\ref{hascc},
Theorem~\ref{Tverma}(1)--(2) and Theorem~\ref{finiteness}.
\end{proof}

Now we introduce an analogue of the Bernstein-Gelfand-Gelfand
category $\mathcal O$:
let $\mathcal O(e) = \mathcal O(e;\t,\q)$ 
denote the category of all finitely
generated $U(\g,e)$-modules $V$ that are semisimple over $\t^e$
with finite dimensional $\t^e$-weight spaces,
such that the set 
$\{\la \in (\t^e)^*\:|\:V_\la \neq \{0\}\}$
 is contained in a finite union of sets of the
form $\{\nu\in(\t^e)^*\:|\: \nu\leq\mu\}$ for $\mu \in (\t^e)^*$.
As $U(\g,e)$ is Noetherian, $\mathcal O(e)$ is closed under 
the operations of taking submodules, quotients and finite direct sums.
The following statement follows routinely
from Corollary~\ref{flen}
and Theorem~\ref{Tverma}.

\begin{Corollary}
Every object in $\mathcal O(e)$ has a composition series.
Moreover the category $\mathcal O(e)$ decomposes as
$\mathcal O(e) = \bigoplus_{\psi}  \mathcal O_\psi(e)$,
where the direct sum is over all central characters $\psi:Z(\g) \rightarrow \C$,
and $\mathcal O_\psi(e)$ denotes the Serre subcategory of $\mathcal O(e)$
generated by the irreducible modules
$\{L(\La,e)\:|\:\La \in \mathcal L\text{ such that }\psi^\La = \psi\}$.
\end{Corollary}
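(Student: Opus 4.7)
The strategy is to prove both statements simultaneously by splitting each $V \in \mathcal O(e)$ into generalized central character blocks and then bounding the length of each block via Theorem~\ref{finiteness}.

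To construct the block decomposition, take $V \in \mathcal O(e)$. Each $\t^e$-weight space $V_\la$ is finite dimensional and preserved by $Z(\g,e) = \Pr(Z(\g))$, since $Z(\g,e)$ commutes with $\theta(\t^e)$ and hence sits inside $U(\g,e)_0$. It follows that $Z(\g)$ acts locally finitely on $V$ through $\Pr$, and the standard generalized eigenspace construction
$$
V_\psi := \{v \in V \mid (\Pr(z)-\psi(z))^N v = 0 \text{ for all } z \in Z(\g) \text{ and all sufficiently large } N\}
$$
yields a vector space decomposition $V = \bigoplus_\psi V_\psi$ indexed by characters $\psi:Z(\g)\to\C$. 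Because $\Pr(Z(\g))$ is central in $U(\g,e)$, each $V_\psi$ is a $U(\g,e)$-submodule and itself belongs to $\mathcal O(e)$. Finite generation of $V$ together with finite dimensionality of its weight spaces forces only finitely many $V_\psi$ to be nonzero.

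Next I would pin down the composition factors of $V_\psi$. By Corollary~\ref{hascc}, the irreducible highest weight module $L(\La,e)$ has actual central character $\psi^\La$, so if $L(\La,e)$ appears as a composition factor of $V_\psi$ then each $\Pr(z)-\psi(z)$ acts both nilpotently and as the scalar $\psi^\La(z)-\psi(z)$ on $L(\La,e)$, forcing $\psi^\La = \psi$. This places $V_\psi$ in $\mathcal O_\psi(e)$ and establishes the decomposition $\mathcal O(e) = \bigoplus_\psi \mathcal O_\psi(e)$.

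It remains to bound the length of each $V_\psi$. Theorem~\ref{finiteness} ensures that $\mathcal L_\psi := \{\La \in \mathcal L \mid \psi^\La = \psi\}$ is finite. For each $\La \in \mathcal L_\psi$, Theorem~\ref{Tverma}(2) identifies $L(\La,e)_{\la_\La} \cong V_\La$ where $\la_\La$ denotes the maximal weight of $L(\La,e)$. Counting dimensions of the $\la_\La$-weight space along any chain of submodules yields
$$
[V_\psi : L(\La,e)] \cdot \dim V_\La \leq \dim (V_\psi)_{\la_\La} < \infty.
$$
Summing over the finite set $\mathcal L_\psi$ produces a finite bound on the length of $V_\psi$, hence on the length of $V$, and then a composition series exists by the standard Jordan--H\"older argument. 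The main technical subtlety to verify is that the $V_\psi$ really are $U(\g,e)$-submodules and that only finitely many are nonzero when $V$ is finitely generated; everything else is routine given Corollary~\ref{flen} and Theorem~\ref{Tverma}.
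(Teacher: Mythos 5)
Your overall strategy --- splitting into generalized central character blocks and then counting dimensions of distinguished weight spaces over the finite set $\mathcal L_\psi=\{\La\:|\:\psi^\La=\psi\}$ --- is exactly the standard argument the paper intends, and the first three steps are sound: the $V_\psi$ are submodules because $\Pr(Z(\g))$ is central, only finitely many are nonzero because the generators lie in finitely many blocks, and any composition factor of $V_\psi$ of the form $L(\La,e)$ must have $\psi^\La=\psi$. The gap is in the last step. The inequality $[V_\psi:L(\La,e)]\cdot\dim V_\La\le\dim(V_\psi)_{\la_\La}$ only counts the quotients in a chain whose successive quotients are \emph{already known to be irreducible and of the form $L(\La,e)$ with $\La\in\mathcal L_\psi$}; it does not by itself bound the length of an arbitrary chain of submodules, which is what is needed to conclude that a composition series exists. (It also silently assumes that every irreducible subquotient of an object of $\mathcal O(e)$ is some $L(\La,e)$, which you have not established.)

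The missing ingredient is the claim that every nonzero subquotient $M$ of $V_\psi$ satisfies $M_{\la_\La}\neq\{0\}$ for some $\La\in\mathcal L_\psi$, where $\la_\La$ is the maximal weight of $V_\La$. This is where Theorem~\ref{Tverma} and Corollary~\ref{flen} actually do their work: since the weights of $M$ lie in finitely many sets $\{\nu\:|\:\nu\le\mu\}$ and $\Phi^e_+$ spans a pointed cone, $M$ has a maximal weight $\mu$; the finite dimensional $U(\g_0,e)$-module $M_\mu$ contains an irreducible submodule isomorphic to some $V_\La$, which generates a highest weight submodule of type $\La$, i.e.\ a quotient of $M(\La,e)$ (of finite length by Corollary~\ref{flen}); by Corollary~\ref{hascc} this submodule has central character $\psi^\La$, and since $\Pr(z)-\psi(z)$ acts locally nilpotently on $M$ we get $\psi^\La=\psi$, so $\La\in\mathcal L_\psi$ and $\mu=\la_\La$. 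With this claim, every link of an arbitrary chain of submodules of $V_\psi$ contributes at least $1$ to $\sum_{\La\in\mathcal L_\psi}\dim(V_\psi)_{\la_\La}$, which is the genuine bound on chain lengths, and it also shows all irreducible subquotients are of the form $L(\La,e)$, so that $V_\psi$ really lands in $\mathcal O_\psi(e)$. I would add that the two points you flag as the ``main technical subtlety'' (the $V_\psi$ being submodules, finitely many being nonzero) are in fact the routine ones; the step above is where the content lies.
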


In particular, this shows that the irreducible objects in $\mathcal O(e)$
are all of the form $L(\La,e)$ for $\La \in \mathcal L$.
In the case $e = 0$,
$\mathcal O(e)$ 
is the usual BGG category $\mathcal O$ for the semisimple Lie
algebra $\g$ with respect to the maximal toral subalgebra
$\t$ and the Borel $\q$. At the other extreme, if $e$ is a distinguished 
nilpotent element
of $\g$, then $\mathcal O(e)$ is 
the category of all finite dimensional $U(\g,e)$-modules that are semisimple
over the Lie algebra center of $\g$.

\begin{Remark}\rm
If $e$ is a distinguished but not a regular nilpotent 
element of $\g$ then
$U(\g,e)$ has primitive ideals of infinite codimension 
by \cite[Theorem 1.2.2(viii)]{Losev} and \cite[Theorem 3.1]{P2}. So
there is no chance in this case 
that every primitive ideal of $U(\g,e)$ is the annihilator
of an irreducible highest weight module in our narrow sense (finite dimensional weight spaces). 
\end{Remark}

\section{Special cases}

In this section we specialize further.
First we discuss the case that 
$e$ is of {\em standard Levi type} in the sense of \cite{FP}, i.e. 
it is a regular nilpotent element of $\g_0$. In particular, we will
formulate a precise conjecture for the
classification of finite dimensional irreducible $U(\g,e)$-modules
in standard Levi type.
Then we prove this conjecture for the standard choice of positive
roots in type A, by
translating some
results from \cite{BK} into the present framework.
We continue with the notation from the previous section; in
particular, recall we fixed a parabolic subalgebra $\q$ with Levi factor $\g_0$ 
in $\S$\ref{sCartan}.

\subsection{Standard Levi type}\label{sLevi}
Assume from now on that $e$ is a regular nilpotent element of $\g_0$.
In that case,
by \cite[$\S$2]{Kostant}, the map $\Pr_0$ is an isomorphism
$$
\Pr_0:Z(\g_0) \stackrel{\sim}{\rightarrow} U(\g_0,e).
$$
As we have already observed in $\S\S$\ref{scharacters}--\ref{sO}, many things in the theory are simpler under this assumption.
To start with, $\p_0$
is actually a Borel subalgebra of $\g_0$ with opposite nilradical $\n_0$.
Let $\Psi$ and $\Psi_0$ be the Harish-Chandra isomorphisms for $\g$
and $\g_0$, defined as in $\S$\ref{scharacters}. 
Recalling that
$\Phi_{+}=\left\{\alpha \in \Phi\:\big|\:\alpha|_{\t^e} \in \Phi^e_{+}\right\}$ 
is the set
of roots corresponding to the nilradical $\g_+$ of $\q$,
let
$$
\epsilon := {\textstyle{\frac{1}{2}}} \sum_{\alpha \in \Phi_+} \alpha
+
{\textstyle{\frac{1}{2}}} \!\sum_{\substack{1 \leq i \leq r \\ \beta_i|_{\t^e} = 0}}
\beta_i.
$$
This is just the weight $\rho_0$ from $\S$\ref{scharacters} 
for the system of positive roots in $\Phi_0$ 
corresponding to the Borel subalgebra $\t \oplus \n_0$ of $\g_0$ (though 
we don't necessarily want to fix this choice).
With this in mind, the following lemma is essentially 
\cite[Proposition 2.3]{Kostant}:

\begin{Lemma}\label{ks}
Let $\xi:U(\p_0) \twoheadrightarrow S(\t)$ 
be the homomorphism induced by the natural projection 
$\p_0 \twoheadrightarrow \t$. 
Then the restriction of $S_{-\epsilon} \circ \xi$ defines an algebra isomorphism
$$
\xi_{-\epsilon}: U(\g_0,e) \stackrel{\sim}{\rightarrow} S(\t)^{W_0}
$$
such that
$\Psi_0 = \xi_{-\epsilon} \circ \Pr_0$.
\end{Lemma}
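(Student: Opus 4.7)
The plan is to establish the identity $\Psi_0=\xi_{-\epsilon}\circ \Pr_0$ directly; the remaining assertions then follow at once because $\Pr_0:Z(\g_0)\stackrel{\sim}{\rightarrow}U(\g_0,e)$ is an isomorphism (Kostant's theorem recalled at the start of $\S$\ref{sLevi}) and $\Psi_0:Z(\g_0)\stackrel{\sim}{\rightarrow}S(\t)^{W_0}$ is the Harish-Chandra isomorphism, so $\xi_{-\epsilon}|_{U(\g_0,e)}$ is forced to be an algebra isomorphism onto $S(\t)^{W_0}$.

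For the Harish-Chandra side I choose the positive system $\Phi_0^+=\Phi_0(-)$ consistent with the proof of Theorem~\ref{ed} in standard Levi type. The basis vectors $b_i$ of $\n$ indexed by those $i$ with $\beta_i|_{\t^e}=0$ are precisely the basis vectors of $\n_0=\m_0$, and each root space $\g_{0,\alpha}$ with $\alpha\in\Phi_0(-)$ is one-dimensional, so the corresponding weights $\beta_i$ run through $\Phi_0(-)$ bijectively; hence $\rho_0$ coincides with the weight $\epsilon$ from the statement. On the Whittaker side, $\chi$ restricts to a Lie algebra character of $\n_0$ and extends to an algebra homomorphism $\chi:U(\n_0)\to\C$. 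Using the PBW ordering with the $\n_0$-basis on the right, any $z\in U(\g_0)$ has a unique expression $z=\sum_\alpha u_\alpha b^\alpha$ with $u_\alpha\in U(\p_0)$, and the definition of $\Pr_0$ then reads $\Pr_0(z)=\sum_\alpha u_\alpha\chi(b^\alpha)$.

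Now take $z\in Z(\g_0)$. By uniqueness of PBW applied to the $\t$-weight decomposition, each $u_\alpha$ is a $\t$-weight vector of weight $-\sum_i\alpha_i\beta_i$, which is a non-negative integer combination of elements of $\Phi_0(+)$. Expanding $u_\alpha$ via $U(\p_0)=S(\t)\otimes U(\n_0^+)$ shows that $u_\alpha\in S(\t)$ when $\alpha=0$ (the $U(\n_0^+)$-factor must be trivial by weight) while $u_\alpha\in S(\t)\cdot\n_0^+$ when $\alpha\neq 0$; in particular $\xi(u_\alpha)=0$ unless $\alpha=0$, giving $\xi(\Pr_0(z))=u_0\in S(\t)$. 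On the other hand $z-u_0=\sum_{\alpha\neq 0}u_\alpha b^\alpha$ visibly lies in the left ideal $U(\g_0)\n_0=\sum_{\alpha\in\Phi_0^+}U(\g_0)x_\alpha$. Combining this with the Harish-Chandra congruence $z\equiv S_{\rho_0}(\Psi_0(z))\pmod{U(\g_0)\n_0}$ and the elementary PBW observation $S(\t)\cap U(\g_0)\n_0=\{0\}$ yields $u_0=S_\epsilon(\Psi_0(z))$, and applying $S_{-\epsilon}$ delivers $\xi_{-\epsilon}(\Pr_0(z))=\Psi_0(z)$, as required.

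The only step requiring real care is the weight bookkeeping that eliminates the $\alpha\neq 0$ terms from $\xi(\Pr_0(z))$; this is where centrality of $z$ enters in an essential way. The identification $\rho_0=\epsilon$ is a matter of reading off the definitions, and everything else reduces to formal manipulation of the PBW theorem, the definition of $\Pr_0$, and the Harish-Chandra recipe specialised to the convenient positive system $\Phi_0^+=\Phi_0(-)$.
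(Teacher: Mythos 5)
Your argument is correct. Note first that the paper itself gives no proof of this lemma: it simply records that the statement is ``essentially [Kostant, Proposition 2.3]'', so there is no internal proof to compare against. Your self-contained derivation is sound, and it is in effect the standard-Levi-type specialization of the computation of $\Pr_0(z_0)$ carried out in the proof of Theorem~\ref{ed}: there the paper writes $\Pr_0(z_0)=S_{\rho_0}(\Psi_0(z_0))+u_0+\Pr_0(v_0)$ and controls the error terms by good degree, whereas you eliminate them by $\t$-weight; in standard Levi type $\h_0=\t$, so $\Phi_0(0)=\varnothing$, the term $u_0\in\sum U(\h_0)x_{\alpha_i}$ disappears, and both bookkeeping schemes kill the remaining terms under $\xi$. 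The key steps all check out: the PBW decomposition $U(\g_0)=\bigoplus_{\alpha}U(\p_0)b^{\alpha}$ gives $\Pr_0(z)=\sum_\alpha \chi(b^\alpha)u_\alpha$ since $I_0$ is a left ideal; centrality forces $u_\alpha$ to have $\t$-weight $-\sum_i\alpha_i\beta_i$, so $u_0\in S(\t)$ and $\xi(u_\alpha)=0$ for $\alpha\neq 0$; and $S(\t)\cap U(\g_0)\n_0=\{0\}$ identifies $u_0$ with $S_{\rho_0}(\Psi_0(z))$. Two points deserve explicit mention to make the argument airtight: the identification of the multiset $\{\beta_i\:|\:\beta_i|_{\t^e}=0\}$ with $\Phi_0(-)$ (hence $\epsilon=\rho_0$) uses that each root space of $\g_0$ is one-dimensional and that no $b_i\in\n_0$ can have $\t$-weight zero since the zero weight space of $\g_0$ is $\t\subseteq\g_0(0)$; and your freedom to compute with the convenient positive system $\Phi_0^+=\Phi_0(-)$ rests on the independence of $\Psi_0$ from this choice, which the paper justifies in $\S$\ref{scharacters} by citing \cite[7.4.5]{Dix}. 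With those remarks your proof is complete and gives the reader more than the paper's citation does.
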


We can now define an explicit set $\mathcal L$ parametrizing the Verma modules
$M(\La,e)$ and the irreducible highest weight modules $L(\La,e)$
for $U(\g,e)$:
define 
$$
\mathcal L := \t^* / W_0 = \operatorname{Spec}(S(\t)^{W_0}).
$$ 
Thus each $\La \in \mathcal L$ is a $W_0$-orbit of weights in $\t^*$.
For each $\La \in \mathcal L$, let 
$V_\La$ denote the
one dimensional irreducible $U(\g_0,e)$-module 
obtained  by lifting the irreducible $S(\t)^{W_0}$-module corresponding to
$\La$ through the isomorphism
$\xi_{-\epsilon}:U(\g_0,e) \stackrel{\sim}{\rightarrow} S(\t)^{W_0}$
from Lemma~\ref{ks}.
Also fix finally a Borel subalgebra $\b_0$ of $\g_0$ containing 
$\t$ and let
$$
\b := \b_0 \oplus \g_+,
$$
which is a Borel subalgebra of $\g$ contained
in the parabolic $\q$.
Let $\Phi_0^+$ and $\Phi^+ = \Phi_0^+ \sqcup \Phi_+$ be the 
systems of positive
roots in $\Phi_0$ and $\Phi$ corresponding to $\b_0$ and $\b$.
Let $\rho := \frac{1}{2}\sum_{\alpha \in \Phi^+} \alpha$. 
For $\la \in \t^*$, let $\C_{\la - \rho}$ be the
one dimensional $\t$-module of weight $\la - \rho$.
Let
$$
M(\la) := U(\g) \otimes_{U(\b)} \C_{\la-\rho}
$$ 
denote the usual Verma
module for $\g$ of highest weight $(\la - \rho)$,
with unique irreducible quotient
$L(\la)$.
Note by Corollary~\ref{hascc} and Lemma~\ref{ks} that the central character
$\psi^\La$ of
$L(\La,e)$ is equal to the central character of $L(\la)$
for any $\la \in \La$.
All other notation used below is as explained in
$\S$\ref{sO}.

\begin{Conjecture}\label{c1}\em
For $\La \in \mathcal L$, pick $\la \in \La$ 
such that 
$(\la|\alpha^\vee) \notin \Z_{> 0}$ for
each $\alpha \in \Phi_0^+$. Then 
$L(\La,e)$ is finite dimensional if and only if
$$
\VA(\ann_{U(\g)}(L(\la))) = \overline{G \cdot e}.
$$
\end{Conjecture}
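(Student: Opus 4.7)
The plan is to reduce Conjecture~\ref{c1} to the associated-variety criterion recalled just before Theorem~\ref{finiteness}, which states that for every irreducible $U(\g,e)$-module $L$ one has $\VA(\ann_{U(\g)}(Q_\lag \otimes_{U(\g,e)} L))\supseteq \overline{G\cdot e}$ with equality precisely when $L$ is finite dimensional. Applying this to $L=L(\La,e)$, the conjecture will follow at once from the identification
$$
\ann_{U(\g)}(Q_\lag \otimes_{U(\g,e)} L(\La,e)) \;=\; \ann_{U(\g)}(L(\la)),
$$
for the prescribed $\la\in\La$. Both sides already share the same central character: by Corollary~\ref{hascc} the left-hand side is annihilated by $\ker\psi^\La$, and by Lemma~\ref{ks} combined with Theorem~\ref{ed}, $\psi^\La$ coincides with the central character of $L(\mu)$ for any $\mu$ in the $W$-orbit $\La$.

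To pin down the full annihilator I would try to exhibit $L(\la)$ as a subquotient of $Q_\lag\otimes_{U(\g,e)}M(\La,e)$. Using the PBW description in Theorem~\ref{Tverma}(1) together with the right-exact presentation $Q_\lag\otimes_{U(\g,e)}(U(\g,e)/U(\g,e)_\sharp)\otimes_{U(\g_0,e)}V_\La$ of this induced Whittaker module, one should write down an explicit nonzero $\b$-highest weight vector of weight $\la-\rho$ supported on $1\otimes 1\otimes v_\La$. By Frobenius reciprocity this produces a nonzero $\g$-module homomorphism $M(\la)\to Q_\lag\otimes_{U(\g,e)}M(\La,e)$, and hence a composition factor of the form $L(\la)$ in its head. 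Composing with the canonical surjection $Q_\lag\otimes_{U(\g,e)}M(\La,e) \twoheadrightarrow Q_\lag\otimes_{U(\g,e)}L(\La,e)$ and invoking Skryabin's equivalence to see that the target is already irreducible as a $U(\g)$-module, we conclude that $L(\la)\cong Q_\lag\otimes_{U(\g,e)}L(\La,e)$, whence the required equality of annihilators.

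The hard part is the non-vanishing of the proposed $\b$-highest weight vector, which is essentially the content of the Milicic--Soergel comparison between the Whittaker category $\mathcal O(\chi)$ and classical category $\mathcal O$ that underlies the second conjecture stated in Section~\ref{sLevi}. Under the antidominance hypothesis $(\la|\al^\vee)\notin\Z_{>0}$ for $\al\in\Phi_0^+$ this comparison is expected to hold cleanly, whereas without antidominance one expects $L(\la')$ for a different representative $\la'$ of the $W_0$-orbit to arise as the head of $Q_\lag\otimes_{U(\g,e)}L(\La,e)$; this is precisely why the conjecture singles out a specific $\la\in\La$. Executing this programme in the general reductive case appears to require substantial new input beyond what is established in the present paper, which is presumably why the authors state Conjecture~\ref{c1} as a conjecture and treat type A separately in Section~\ref{sA} by importing the highest weight theory of shifted Yangians from \cite{BK}.
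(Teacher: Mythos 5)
You have not proved the statement, and you say as much in your final paragraph; but it is worth being precise about where the argument actually breaks and how the paper really handles this. First, the statement is a \emph{conjecture}: the paper never proves it in general, and its only ``proof'' is Corollary~\ref{cls}, which establishes the type~A case (for the standard positive system) by a route entirely different from yours. There the modules $L(\La,e)$ are identified with the modules $L(A)$ of \cite{BK}, finite-dimensionality is characterized combinatorially by the existence of a column-strict representative of the row-equivalence class $A$ (Theorem~\ref{fdclass}, imported from \cite[Theorem 7.9]{BK}), and this is matched against Joseph's Robinson--Schensted description of $\VA(\ann_{U(\g)}(L(\la)))$. Your first paragraph --- reduce everything to the equality $\ann_{U(\g)}(Q_\lag\otimes_{U(\g,e)}L(\La,e))=\ann_{U(\g)}(L(\la))$ and then apply the Premet--Losev associated-variety criterion --- is exactly the reduction the authors themselves record in the paragraph following Conjecture~\ref{c2}; it shows Conjecture~\ref{c1} follows from Conjecture~\ref{c2} together with \cite[Theorem 5.1]{MS}, but Conjecture~\ref{c2} is open, so this is a reduction, not a proof.

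Second, the concrete mechanism you propose in your middle paragraph for establishing the annihilator identity would fail. The module $Q_\lag\otimes_{U(\g,e)}M(\La,e)$ lies in the Whittaker category $\mathcal W(e)$: the elements $x-\chi(x)$ for $x\in\m\oplus\lag$ act locally nilpotently, so any root vector $x$ with $\chi(x)\neq 0$ acts with generalized eigenvalue $\chi(x)\neq 0$ on every nonzero vector. Since $e$ is regular nilpotent in $\g_0$, the character $\chi$ is nonzero on the relevant negative simple root vectors of $\g_0$, and consequently this module contains \emph{no} $\b$-highest weight vectors at all for a Borel adapted to the problem --- in the extreme case $e$ regular in $\g$ it is Kostant's irreducible Whittaker module, which famously has no vectors annihilated by the nilradical of any Borel and is not even a $\t$-weight module. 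So there is no nonzero map $M(\la)\to Q_\lag\otimes_{U(\g,e)}M(\La,e)$ to be produced by Frobenius reciprocity, and the conclusion $L(\la)\cong Q_\lag\otimes_{U(\g,e)}L(\La,e)$ is false on its face (the right-hand side is a Whittaker module, not a highest weight module). The genuine content of the comparison is the conjectural equivalence $\mathbb W:\mathcal O(\chi)\to\mathcal O(e)$ of Conjecture~\ref{c2} (a Whittaker-vector/completion functor in the spirit of Mili\v c\'ic--Soergel and Backelin), which relates annihilators without ever embedding a Verma module into the Whittaker side; your antidominance discussion correctly locates where the hypothesis $(\la|\al^\vee)\notin\Z_{>0}$ enters, but the bridge you build to get there does not exist.
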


We will verify this conjecture in type $A$ (for the
standard choice of positive roots) in Corollary~\ref{cls} below.
To formulate a stronger conjecture which was inspired by ideas of Premet, let
$$
\u := \n_0 \oplus \g_+,
$$
which is a maximal nilpotent subalgebra of $\g$ contained in $\q$.
Note that
$\chi$ restricts to a character of $\u$. 
Let $\mathcal{O}(\chi) =\mathcal{O}(\chi;\t,\q)$
denote the category of all finitely generated $\g$-modules $M$
that are locally finite over $Z(\g)$ and semisimple over $\t^e$, such that
$x-\chi(x)$ acts locally nilpotently on $M$ for all $x \in \u$.
This is the category $\mathcal{N}(\chi)$ from \cite{MS}
(with $\n$ there equal to our $\u$) except we have added the 
mild extra condition that the center of the Levi factor of $\q$ containing 
$\t$ acts semisimply. In the case $\chi = 0$ we note that
$\mathcal O(\chi)$ is the usual BGG category $\mathcal O$ again.
To define the basic objects in the category $\mathcal O(\chi)$, 
let $R$ denote the quotient of $U(\g)$ by the left ideal
generated by all $\{x-\chi(x)\:|\:x \in \u\}$.
This left ideal is invariant under right multiplication by
elements of $U(\g_0,e)$, hence $R$ is a $(U(\g), U(\g_0,e))$-bimodule.
For $\La \in \mathcal L$, set
$$
M(\La,\chi) := R \otimes_{U(\g_0,e)} V_\La,
$$
naturally an object of $\mathcal O(\chi)$ of central character $\psi^\La$.
In \cite[$\S$2]{MS}, it is shown that $M(\La,\chi)$
has a unique irreducible quotient $L(\La,\chi)$, and that every object in
$\mathcal O(\chi)$ has a composition series involving only the $L(\La,\chi)$ 
as composition factors.

\begin{Conjecture}\label{c2}\em
There is an equivalence of categories
$\mathbb{W}:\mathcal{O}(\chi) \rightarrow \mathcal{O}(e)$
such that 
$\mathbb{W} M(\La,\chi) \cong M(\La,e)$ and
$\mathbb{W} L(\La,\chi) \cong L(\La,e)$ 
for
each $\La \in \mathcal L$.
Moreover, $\mathbb{W}$ should respect annihilators in the sense that
$$
\ann_{U(\g)}(M) = \ann_{U(\g)}(Q_\lag \otimes_{U(\g,e)} \mathbb{W} M)
$$
for each $M \in \mathcal{O}(\chi)$.
\end{Conjecture}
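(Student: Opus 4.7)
The plan is to construct $\mathbb{W}$ as a generalized Whittaker functor and then verify its properties in stages. Given $M \in \mathcal O(\chi)$, I would set
$$
\mathbb{W} M := \{v \in M \:|\: (x-\chi(x))^N v = 0 \text{ for all } x \in \m \text{ and } N \gg 0\},
$$
the space of $\m$-Whittaker vectors in $M$, and endow it with a $U(\g,e)$-action using the identification $U(\g,e) \cong Q_\lag^{\m\oplus\lag}$ from Remark~\ref{premets} via the right-hand action on $Q_\lag$. Equivalently, one realises $\mathbb{W}$ as a quasi-inverse to Skryabin's equivalence restricted to $\mathcal O(\chi)$; the key preliminary step is to verify that for every $M\in\mathcal O(\chi)$ the canonical map $Q_\lag\otimes_{U(\g,e)}\mathbb{W} M\to M$ is an isomorphism of $\g$-modules, which immediately yields the annihilator assertion
$$
\ann_{U(\g)}(M) = \ann_{U(\g)}(Q_\lag \otimes_{U(\g,e)} \mathbb{W} M).
$$

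Granted this, I would check that $\mathbb{W} M$ lies in $\mathcal O(e)$: semisimplicity over $\t^e$ is inherited from the $\t^e$-semisimplicity of $M$ (after the shift by $\delta$ built into the definition of weight spaces in $\S$\ref{sVerma}), finite generation follows from finite generation of $M$ together with the quasi-inverse property, and the weight-support condition reduces to the corresponding condition in $\mathcal O(\chi)$ via the $\t^e$-weight analysis of $Q_\lag$. To identify $\mathbb{W} M(\La,\chi)$ with $M(\La,e)$, I would use that both are built from $V_\La$ by inducing along parallel bimodules: $M(\La,\chi) = R\otimes_{U(\g_0,e)} V_\La$ and $M(\La,e) = (U(\g,e)/U(\g,e)_\sharp)\otimes_{U(\g_0,e)} V_\La$. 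Taking $\m$-Whittaker vectors in $R$ ought to single out a free right $U(\g_0,e)$-module with basis mirroring the $F^{\mathbf a}$-basis of Lemma~\ref{LRBasis}, and this identification together with Theorem~\ref{csa} would give $\mathbb{W} R\cong U(\g,e)/U(\g,e)_\sharp$ as $(U(\g,e),U(\g_0,e))$-bimodules. The identification on irreducibles then drops out, since $\mathbb{W}$ preserves maximal submodules by exactness.

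To upgrade to an equivalence of categories, I would show essential surjectivity by proving that for each $N\in\mathcal O(e)$ the module $Q_\lag\otimes_{U(\g,e)} N$ lies in $\mathcal O(\chi)$: the crucial point is that $\u$-Whittaker-ness of $Q_\lag\otimes_{U(\g,e)} N$ (not merely $\m$-Whittaker-ness) should follow from the locally nilpotent action of $\n_0$ on the weight spaces of $N$ together with the good-grading structure of $Q_\lag$. Full faithfulness would then follow formally from Skryabin's theorem combined with the unit and counit of the adjunction being natural isomorphisms on the relevant subcategories.

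The main obstacle, and the reason this remains a conjecture, is showing that $\mathbb{W} R$ really is the full bimodule $U(\g,e)/U(\g,e)_\sharp$: there is no a priori reason to expect enough $\m$-Whittaker vectors in $R$, and one must establish a Kostant-style vanishing theorem for the $\m$-cohomology of $R$ (with coefficients twisted by $\chi$), analogous to the vanishing underlying Skryabin's theorem but now with respect to the smaller subalgebra $\m \subset \u$. A secondary difficulty is controlling the $\u$-Whittaker condition on $Q_\lag\otimes_{U(\g,e)} N$ for objects $N\in\mathcal O(e)$ that are not highest weight, which seems to require a filtration-by-Vermas argument and fine information on the structure of $Q_\lag$ as a $\widetilde{\p}$-module.
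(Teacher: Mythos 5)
This statement is Conjecture~5.2 of the paper: the authors give no proof of it at all (they only note that it implies Conjecture~5.1 and that it is known when $e$ is a long root element, by Premet). So there is nothing in the paper to compare your argument against, and what you have written is, as you yourself concede, a strategy sketch rather than a proof. Within that sketch one step is not merely unproven but false, and it is the step on which everything else rests: the claim that for every $M \in \mathcal{O}(\chi)$ the counit $Q_\lag \otimes_{U(\g,e)} \mathbb{W}M \to M$ is an isomorphism of $\g$-modules, i.e.\ that $\mathbb{W}$ is literally Skryabin's quasi-inverse restricted to $\mathcal{O}(\chi)$. Every module of the form $Q_\lag \otimes_{U(\g,e)} N$ lies in Skryabin's category $\mathcal{W}(e)$, so $x-\chi(x)$ acts locally nilpotently on it for all $x \in \m\oplus\lag$. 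But objects of $\mathcal{O}(\chi)$ are Whittaker only with respect to $\u = \n_0 \oplus \g_+$, and $\m \oplus \lag \not\subseteq \u$ in general: the summand $\m_- = \bigoplus_{\al \in \Phi^e_-}\m_\al$ lies in $\g_-$, and on $M(\La,\chi) = R\otimes_{U(\g_0,e)}V_\La$ the elements of $\m_-$ act like lowering operators in classical category $\mathcal{O}$, not Whittaker-nilpotently. Hence $M(\La,\chi) \notin \mathcal{W}(e)$ whenever $\m_- \neq 0$, and it cannot be isomorphic to $Q_\lag\otimes_{U(\g,e)}(\text{anything})$. This is precisely why the conjecture asks only for equality of annihilators rather than asserting that $\mathbb{W}$ inverts Skryabin's equivalence; your annihilator argument, which deduces the equality from a module isomorphism, therefore collapses. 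The mirror-image problem kills your essential-surjectivity step: for $N \in \mathcal{O}(e)$ the module $Q_\lag\otimes_{U(\g,e)}N$ does not lie in $\mathcal{O}(\chi)$, since $\g_+ \subseteq \u$ contains nonnegative-degree root vectors that do not act locally nilpotently there.

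It is worth noting that the one case where the paper asserts the conjecture is known --- $e$ a long root element --- is exactly the case where $\m = \g(-2) \subseteq \g_0$ and hence $\m\oplus\k_+ \subseteq \u$, so that the two Whittaker conditions are nested and the functor of genuine (not generalized) Whittaker vectors with respect to $\m\oplus\k_+$ makes sense on $\mathcal{O}(\chi)$; that is the content of Premet's Theorem~7.1 cited in the paper. In general any correct construction of $\mathbb{W}$ must interpolate between the incomparable subalgebras $\u$ and $\m\oplus\lag$, which is the real open problem here; the obstacles you list in your final paragraph (a Kostant-type vanishing for $R$, control of non-highest-weight objects) are genuine but secondary to this structural mismatch.
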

We point out that Conjecture~\ref{c1} follows from Conjecture~\ref{c2}.
Indeed, using \cite[Theorem 5.1]{MS}, one can check 
for $\la \in \La$ as in Conjecture~\ref{c1} that
$$
\ann_{U(\g)} (L(\La,\chi)) = \ann_{U(\g)}(L(\la)).
$$
By Conjecture~\ref{c2} we get that
$\ann_{U(\g)}(Q_\lag \otimes_{U(\g,e)} L(\La,e)) = \ann_{U(\g)}(L(\la))$,
and then Conjecture~\ref{c1} follows
using 
\cite[Theorem 3.1]{P2}
and \cite[Theorem 1.2.2(ii),(ix)]{Losev} (see the discussion just before
Theorem~\ref{finiteness}).
Combined with \cite[Theorem 6.2]{Back} and the 
Kazhdan--Lusztig conjecture for $\g$,
Conjecture~\ref{c2} 
would also mean that the composition multiplicities of all
Verma modules $M(\La,e)$
can be computed in terms of Kazhdan--Lusztig polynomials. In particular,
the Kazhdan--Lusztig conjecture of \cite{deVosvanDriel} (as we understand it)
is a consequence, as is \cite[Conjecture 7.17]{BK} in type A.
Note finally that Conjecture~\ref{c2} (hence also Conjecture~\ref{c1})
is true if $e \in \g$ is a long root element. In this special case
for the good grading arising from the
$\ad h$-eigenspace decomposition of $\g$,
the equivalence of categories
$\mathbb{W}$ is given simply by taking Whittaker vectors with respect
to $\m \oplus \k_+$; see
\cite[Theorem 7.1]{P2}.

\subsection{Type A}\label{sA}

We now recast some of the
results of \cite{BK} in the language
of this paper. In 
particular we prove Conjecture \ref{c1} for the
standard choice of positive roots in type A.
So let $\g := \gl_N(\C)$ equipped with the trace form $(.|.)$, 
$\t$ be the set of diagonal matrices
and $\b$ be the set of upper triangular matrices.
Let $\eps_i \in \t^*$ 
be the $i$th diagonal coordinate function.
Then the root system is $\Phi=\Phi^+ \sqcup (-\Phi^+)$ where
$\Phi^+ := \{\eps_i-\eps_j\:|\:1 \leq i < j \leq N\}$ as usual.

Let $\mathbf p$ be a partition of $N$ and draw its Young diagram 
like in the following example:
$$
\Diagram{1&2\cr3&4&5\cr6&7&8&9\cr}
\begin{picture}(0,5)
\put(-68.2,13.6){$p_1$}
\put(-68.2,.6){$p_2$}
\put(-68.2,-12.4){$p_3$}
\put(-44, -30){$q_1\, q_2\, q_3\, q_4$}
\end{picture}
$$

\vspace{8mm}

\noindent
We let $n$ denote the number of rows and $\ell$ denote the number of
columns in the Young diagram of $\mathbf p$. We index the rows of the diagram
by $1,\dots,n$ from top to bottom, columns by $1,\dots,\ell$
from left to right, and boxes by $1,\dots,N$ along rows as in the example. 
Let $p_i$ (resp.\ $q_i$) denote the number of boxes in the $i$th row 
(resp.\ $i$th column).
Let $\RR(i)$ and $\CC(i)$ denote the row and
column numbers of the $i$th box.
Letting $e_{i,j}$ denote the $ij$-matrix unit, we let 
$e \in \g_2$ be the nilpotent matrix
$$
e = \sum_{\substack{1 \leq i,j \leq N \\ \RR(i) = \RR(j)\\ \CC(i) = \CC(j)-1}} e_{i,j},
$$
which clearly has 
Jordan type $\mathbf p$; 
e.g. $e = e_{1,2} + e_{3,4}+e_{4,5}+e_{6,7}+e_{7,8}+e_{8,9}$ 
in the above example.
We define an even good grading for $e$ 
by declaring that $e_{i,j}$ is of degree $2 (\CC(j)-\CC(i))$. 
We call this the {\em standard good grading}.
Now define the finite $W$-algebra $U(\g,e)$ as in
$\S$\ref{snonlinear}. As the good grading is even, $U(\g,e)$
is simply a subalgebra of $U(\p)$. The Levi factor $\h$
of $\p$ satisfies
$$
\h \cong \gl_{q_1}(\C) \oplus\cdots\oplus\gl_{q_\ell}(\C).
$$
We also fix the choice of the parabolic $\q$
in $\S$\ref{sCartan} to be the span of the matrix units
$\{e_{i,j}\:|\:\RR(i) \leq \RR(j)\}$. So the Levi factor $\g_0$
of $\q$ satisfies
$$
\g_0 \cong \gl_{p_1}(\C)\oplus\cdots\oplus \gl_{p_\ell}(\C).
$$
The choice of $\q$ determines a system of positive
roots $\Phi^e_+$ in the restricted root system $\Phi^e$, which we call
the {\em standard positive roots}. 

We incorporate the following two shifts 
as indicated in Remarks~\ref{thetam} and \ref{vary}:
$$
\eta := \sum_{i=1}^N (n-q_{\CC(i)}-q_{\CC(i)+1}-\cdots-q_\ell) \eps_i,\qquad
o := -{\textstyle\frac{1}{2}}(N-1)\sum_{i=1}^N \eps_i.
$$
Noting $\eta$ does indeed extend to a character of $\p$,
the embedding $\theta$ from Theorem~\ref{lif} shifted in this way is
the restriction of $S_\eta:U(\p) \rightarrow U(\p)$,
which matches \cite[(9.2)]{BKAdv}.
Also the choice of origin $o$ means that the weight $\rho$
from $\S$\ref{scharacters} is
$$
\rho = -\eps_2-2\eps_3-\cdots-(N-1)\eps_N,
$$
which agrees with the choice made in \cite{BK}.
In \cite{BKAdv} an {\em explicit} linear map
$\Theta:\g^e\hookrightarrow U(\g,e)$ as in
Theorem~\ref{Th} was described. 
The images of a certain distinguished basis
of $\g^e$ under this explicit map $\Theta$ were denoted
\begin{eqnarray*}
&&\{D_i^{(r)}\in U(\g,e)\:|\: 1\leq i\leq n,\ 1\leq r\leq p_i\},\\
&&\{E_{i,j}^{(r)}\in U(\g,e)\:|\: 1\leq i<j\leq n,\ p_j-p_i<r\leq p_j\},\\
&&\{F_{i,j}^{(r)}\in U(\g,e)\:|\: 1\leq i<j\leq n,\ 0<r\leq p_i\}.
\end{eqnarray*}
These elements belong to the zero, positive and negative restricted root
spaces of $U(\g,e)$, respectively. 
Recall the maps $\xi_{-\epsilon}$ from Lemma~\ref{ks} and
$\pi_{-\gamma}$ from Theorem~\ref{csa}.

\begin{Lemma}\label{last}
$\xi_{-\epsilon}(\pi_{-\gamma}(D_i^{(r)}))$ is equal to the
$r$th elementary symmetric function in 
$\{e_{j,j}+i-1\:|\:1 \leq j \leq N, \RR(j) = i\}$.
\end{Lemma}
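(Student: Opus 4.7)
The plan is to trace $D_i^{(r)}$ through the maps $\pi_{-\gamma}$ and $\xi_{-\epsilon}$ using the explicit formulas from \cite[$\S$9]{BKAdv}. I would first observe that the composition $\xi_{-\epsilon} \circ \pi_{-\gamma} : U(\g,e)_0 \to S(\t)^{W_0}$ factors as the shift $S_{-\gamma - \epsilon}$ applied to the algebra projection $\rho : U(\widetilde\p)_0 \twoheadrightarrow U(\t)$ that kills every PBW monomial containing a non-diagonal factor. Indeed, using a PBW basis of $\widetilde\p$ ordered so that $\t$-factors come first, $\pi$ kills any PBW monomial with a nonzero $\t^e$-weight factor (that is, $\t^e$-root elements of $\g$ or any $\k^\ne$-element, since $\k_0 = 0$ in this even good grading), while $\xi$ kills any remaining monomial containing an $\n_0$-factor; since $\h_0 = \t$ (the diagonal matrices in $\g_0$ are already Cartan), what survives is precisely $U(\t)$.

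Next, I would identify $\rho(D_i^{(r)})$ directly from the formula in \cite[$\S$9]{BKAdv}. The construction there presents $D_i^{(r)}$ as an explicit sum of PBW monomials in the matrix units of $\p$ and in $\k^\ne$; reading off the subsum involving only the diagonal matrix units $\{e_{j,j} : \RR(j) = i\}$ shows that $\rho(D_i^{(r)})$ equals the $r$-th elementary symmetric polynomial in $\{e_{j,j} + \eta(e_{j,j}) : \RR(j) = i\}$, the shift by $\eta$ being the one incorporated into $\Theta$ via Remark~\ref{thetam}. At this point the problem is reduced to matching shifts.

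The remaining step is to check that applying $S_{-\gamma-\epsilon}$ sends $e_{j,j} + \eta(e_{j,j})$ to $e_{j,j} + (i-1)$ whenever $\RR(j) = i$. This amounts to verifying the scalar identity
\[
\eta(e_{j,j}) - \gamma(e_{j,j}) - \epsilon(e_{j,j}) = i-1 \qquad (\RR(j) = i),
\]
which follows from the explicit definitions of $\eta$ (given at the start of this section), $\gamma$ (Lemma~\ref{es}) and $\epsilon$ ($\S$\ref{sLevi}) by unpacking the respective sums over the basis $b_1,\dots,b_r$ of $\n$ and over the root systems $\Phi_+$ and $\Phi$.

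The main obstacle is this third paragraph's shift computation: the three weights $\eta$, $\gamma$, $\epsilon$ are defined as rather different sums (over matrix units of $\n$, over $\t^e$-negative basis vectors of $\n$, and over restricted positive roots plus certain $\t^e$-fixed $\n$-basis vectors respectively), and verifying that their combined value on the diagonal $e_{j,j}$ is uniformly $i-1$ across a row is a careful combinatorial bookkeeping exercise using the geometry of the pyramid (the numbers $p_i$, $q_i$ and the row/column functions $\RR,\CC$). The first two paragraphs are comparatively routine: paragraph one is essentially a PBW argument, and paragraph two is a direct reading of the formula in \cite{BKAdv}.
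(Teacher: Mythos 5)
Your overall strategy --- push $D_i^{(r)}$ through the explicit formula of \cite[Corollary 9.4]{BKAdv}, keep only the diagonal part, and match the shifts --- is the same as the paper's, and your reduction of $\xi_{-\epsilon}\circ\pi_{-\gamma}$ to a shift composed with a projection onto $U(\t)$ is fine. But there is a genuine gap in your second paragraph, and it sits exactly where the paper's proof does its real work. The kernel of $\pi$ is $U(\p)_{0,\sharp}$, the weight-zero part of the \emph{left} ideal generated by the positive restricted root spaces; it is spanned by monomials containing a non-$\t$ factor only after those monomials have been put in PBW order (negative restricted roots first, positive last). The monomials $S_\eta(e_{i_1,j_1}\cdots e_{i_s,j_s})$ occurring in the formula for $D_i^{(r)}$ are not ordered this way --- the sequence of boxes wanders between rows --- and reordering them to decide their image under $\pi$ produces commutator corrections that can land in $U(\p_0)$ (for instance $[e_{a,b},e_{b,a}]=e_{a,a}-e_{b,b}$ is diagonal). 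Such corrections would contribute extra terms to your $\rho(D_i^{(r)})$ beyond the visibly diagonal subsum, so ``reading off the subsum involving only the diagonal matrix units'' is not justified as stated.

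The paper closes exactly this gap with a commutation argument exploiting the combinatorial conditions (3) and (4) of \cite[Corollary 9.4]{BKAdv}: in each monomial with some $\RR(i_t)\neq i$ it locates a factor lying in a positive restricted root space that commutes with every factor to its right, so that the monomial lies in the left ideal $U(\p)_\sharp$ on the nose and is killed by $\pi$ with no correction terms. What survives is the subsum with all boxes in row $i$; the non-diagonal survivors are then killed exactly by $\xi$, whose kernel is the \emph{two-sided} ideal generated by the nilradical of $\p_0$, so no reordering issue arises at that stage. Only then does the shift bookkeeping of your third paragraph take over --- and that is actually the routine half: the paper disposes of it in a few lines by counting boxes to the north, east, etc.\ of the $j$th box. (One small caution there: in your identity $\eta(e_{j,j})-\gamma(e_{j,j})-\epsilon(e_{j,j})=i-1$ the weight $\epsilon$ must be the one already shifted by the origin $o$ of Remark~\ref{vary}; with the unshifted $\epsilon$ of $\S$\ref{sLevi} the identity is off by $\frac{1}{2}(N-1)$.)
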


\begin{proof}
We need to recall the explicit form of the element $D_i^{(r)}$
from \cite[Corollary 9.4]{BKAdv}:
$$
D_i^{(r)} = \sum_{s = 1}^r
\sum_{\substack{i_1,\dots,i_s\\j_1,\dots,j_s}}
(-1)^{r-s+\#\{1 < k \leq s\:|\:\RR(i_k) < i\}}
S_\eta(e_{i_1,j_1} \cdots e_{i_s,j_s})
$$
where
the second sum is over all $1 \leq i_1,\dots,i_s,j_1,\dots,j_s \leq N$
such that
\begin{itemize}
\item[(1)] $\CC(j_1)-\CC(i_1)+\cdots+\CC(j_s)-\CC(i_s) = r-s$;
\item[(2)] $\CC(i_t) \leq \CC(j_t)$ for each $t=1,\dots,s$;
\item[(3)] if $\RR(j_t) \geq i$ then
$\CC(j_t) < \CC(i_{t+1})$ for each
$t=1,\dots,s-1$;
\item[(4)]
if $\RR(j_t) <i$ then $\CC(j_t) \geq \CC(i_{t+1})$
for each
$t=1,\dots,s-1$;
\item[(5)] $\RR(i_1) = \RR(j_s)=i$;
\item[(6)]
$\RR(j_t)=\RR(i_{t+1})$ for each $t=1,\dots,s-1$.
\end{itemize}
We claim that 
the map $\pi:U(\p)_0 \rightarrow U(\p_0)$ annihilates all 
$S_\eta(e_{i_1,j_1}\cdots e_{i_s,j_s})$
on the right hand side of this formula 
that have $\RR(i_t) \neq i$ for some $t$.
To see this, take such a monomial and the maximal such $t$.
If $\RR(i_t) < i$ then $e_{i_t,j_t}$
can be commuted to the end of the
monomial in view of (3), hence 
since it lies in a positive restricted root
space
it is mapped to zero by $\pi$.
If $\RR(i_t) > i$ then we let $1 \leq u < t$ be 
maximal such that $\RR(i_u) < \RR(j_u)$.
Again $e_{i_u,j_u}$ can be commuted to the end of the monomial by (3) and 
$\pi$ gives zero.

Using the claim and (6) we see that $\pi(D_i^{(r)})$ is 
given explicitly by the analogous expression
summing over 
$1 \leq i_1,\dots,i_s,j_1,\dots,j_s \leq N$ satisfying the same conditions as before and also
$\RR(i_t) = \RR(j_t) = i$ for all $t$.
Applying $S_{-\gamma}$
then 
$S_{-\epsilon} \circ \xi$ and using Lemma~\ref{bed}
(recalling that $\theta$ is the restriction of $S_\eta$)
we see that 
$\xi_{-\epsilon}(\pi_{-\ga}(D_i^{(r)}))$
is equal to the $r$th elementary symmetric function in
$$
\{S_{-\epsilon-\delta}(e_{j,j})\:|\:1 \leq j \leq N, \RR(j) = i\}.
$$
It remains to show $S_{-\epsilon-\delta}(e_{j,j})
= e_{j,j}+i-1$.
To see this, let
$\N(j), \NE(j), \E(j), \dots$ denote
 the number of boxes to the north (strictly above and in the same column), 
north east (strictly above
and strictly to the right), east (strictly to the right and in the same row),
\dots\, of the $j$th box.
The weights $\delta$ from $\S$\ref{sCartan}
and $\epsilon$ from $\S$\ref{sLevi} are then given explicitly by the
formulae
\begin{align*}
\delta &= \sum_{j=1}^N (\NW(j)+\N(j)+\NE(j)+\E(j)+\SS(j)+1-n)\eps_j,\\
\epsilon &= -\sum_{j=1}^N (\NW(j)+\N(j)+\NE(j)+\E(j))\eps_j,
\end{align*}
recalling we have shifted by $-\eta$ and $o$ as indicated in
Remarks~\ref{thetam} and \ref{vary}.
Hence
$$
\epsilon+\delta =\sum_{j=1}^N (\SS(j)+1-n) \eps_j = \sum_{j=1}^N(1-\RR(j))\eps_j
$$
as required to complete the proof.
\end{proof}

A {\em $\mathbf p$-tableau} means 
a filling of boxes of the Young diagram of $\mathbf p$
with complex numbers. 
The map sending
a tableau to
the weight $\sum_{i=1}^N a_i \eps_i$,
where $a_i$ is the entry in the $i$th box,
defines a bijection from
the set $\Tab(\mathbf p)$ of all $\mathbf p$-tableaux
to the set $\t^*$.
It induces a bijection from
the set $\RRow(\mathbf p)$ of all row equivalence
classes of $\mathbf p$-tableaux
to the set $\mathcal L = \t^* / W_0$
from $\S$\ref{sLevi}.
Let $\leq$ denote the partial order on $\C$ 
defined by $a \leq b$ if $b-a \in \Z_{\geq 0}$.
We call a tableau {\em column strict} if its
entries are strictly increasing up columns from bottom to top 
in this order. 

\begin{Theorem}\label{fdclass}
Let $\La \in \mathcal L$ and $A \in \RRow(\mathbf p)$ be the
corresponding row equivalence class of $\mathbf p$-tableaux.
Then
$L(\La, e)$ is finite dimensional if and only if $A$
has a column strict representative.
\end{Theorem}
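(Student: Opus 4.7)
The plan is to deduce Theorem~\ref{fdclass} from the classification of finite dimensional irreducible $U(\g,e)$-modules established in \cite[\S7.2]{BK}, by identifying the two parametrizations via Lemma~\ref{last}.

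First I would determine, for each $\La \in \mathcal L$, the scalars by which the elements $D_i^{(r)}$ act on the one dimensional highest weight space of $M(\La,e)$. By the construction of $V_\La$ and Lemma~\ref{ks}, this amounts to applying $\xi_{-\epsilon} \circ \pi_{-\gamma}$ to $D_i^{(r)}$ and evaluating the resulting element of $S(\t)^{W_0}$ on a representative $\la \in \La$. By Lemma~\ref{last}, if $A \in \RRow(\mathbf p)$ is the row equivalence class corresponding to $\La$ and $T \in A$ is any representative with entries $a_j$ in the $j$th box, then $D_i^{(r)}$ acts as the $r$th elementary symmetric function in $\{a_j + i - 1 \:|\: \RR(j) = i\}$. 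Packaging these scalars into monic polynomials $A_i(u) := \prod_{j:\RR(j)=i}(u + a_j + i - 1)$, the $n$-tuple $(A_1(u), \ldots, A_n(u))$ determines and is determined by $\La$, and by Theorem~\ref{Tverma} the irreducible module $L(\La,e)$ is determined by this data.

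Next I would invoke the classification from \cite[\S7.2]{BK}: the finite dimensional irreducible $U(\g,e)$-modules are parametrized by those row equivalence classes $A$ that admit a column strict representative, where column strict refers to the order $a \leq b \iff b - a \in \Z_{\geq 0}$. In that work the parametrization is also recorded via the polynomials $A_i(u)$ attached to the highest weight, so the identification of $L(\La,e)$ constructed here with the irreducible module of the same polynomial data in \cite{BK} is immediate once the conventions are matched.

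The main obstacle is precisely this matching of conventions. The shifts by $\eta$ and $o$ introduced at the start of \S\ref{sA} via Remarks~\ref{thetam} and~\ref{vary}, together with the shifts by $\gamma$, $\delta$ and $\epsilon$ entering through $\pi_{-\gamma}$ and $\xi_{-\epsilon}$, must combine to yield exactly the shift by $(i-1)$ in the $i$th row that is built into the definition of column strict used in \cite{BK}. Lemma~\ref{last} was designed to carry out precisely this bookkeeping; the remaining verification is routine, and no new ideas beyond the combinatorics of \cite{BK} are required.
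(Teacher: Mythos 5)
Your proposal is correct and follows essentially the same route as the paper: Lemma~\ref{last} is used to show that $D_i^{(r)}$ acts on the highest weight space of $M(\La,e)$ by the $r$th elementary symmetric function in $\{a_j+i-1\}$, which identifies the highest weight modules of type $\La$ here with the highest weight modules of type $A$ from \cite[$\S$6.1]{BK}, whence the theorem becomes a restatement of \cite[Theorem 7.9]{BK}. The convention-matching you flag as the main obstacle is indeed exactly what Lemma~\ref{last} was set up to absorb, so nothing further is needed.
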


\begin{proof}
In \cite[$\S$6.1]{BK} a $U(\g,e)$-module $M$ is called a
highest weight module of type $A$ if it is generated
by a vector $v_+$ that is annihilated by all the
$E_{i,j}^{(r)}$ and such that
$D_i^{(r)}$ acts on $v_+$ by multiplication by
the $r$th elementary symmetric function in 
the elements $\{a_j + i-1\:|\:1 \leq j \leq N, \RR(j) = i\}$,
where $a_j$ is the entry in the $j$th box of some representative of $A$.
In view of Lemma~\ref{last} and the explicit definition
of $V_\La$ given just after Lemma~\ref{ks}, 
this is exactly the same as the notion of 
a highest weight module of type $\La \in \mathcal L$ from $\S$\ref{sVerma}.
Hence the Verma modules $M(\La, e)$ and their irreducible quotients
$L(\La,e)$ here are exactly the same as the modules $M(A)$
and $L(A)$ in \cite[$\S$6.1]{BK}.
Given this, the present theorem is a restatement of \cite[Theorem 7.9]{BK}.
\end{proof}

\begin{Corollary}\label{cls}
Conjecture~\ref{c1} holds in the present situation.
\end{Corollary}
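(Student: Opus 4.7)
My plan is to combine Theorem~\ref{fdclass} with the classical combinatorial description of $\VA(\ann_{U(\g)}(L(\la)))$ in $\gl_N(\C)$ due originally to Joseph.

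First, I translate the dominance hypothesis on $\la$ into tableau language. Under the bijection $\Tab(\mathbf p)\leftrightarrow\t^*$ from $\S$\ref{sA}, the condition $(\la|\alpha^\vee)\notin\Z_{>0}$ for all $\alpha\in\Phi_0^+$ says that within each row of $T_\la$, no entry $a$ stands to the left of an entry $b$ with $a-b\in\Z_{>0}$; equivalently, the entries of $T_\la$ are weakly increasing along rows in the partial order $\leq$ on $\C$ defined by $a\leq b\iff b-a\in\Z_{\geq 0}$. Every $\La\in\mathcal L$ admits such a representative, and the row equivalence class $A\in\RRow(\mathbf p)$ attached to $\La$ is then the row-class of $T_\la$. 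By Theorem~\ref{fdclass}, $L(\La,e)$ is finite dimensional if and only if this $A$ has a column strict representative.

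Next, I invoke the tableau description of $\VA(\ann_{U(\g)}(L(\la)))$ in type A, namely: for $\la$ in the canonical form above, the Jordan type $\mathbf q$ of $\VA(\ann_{U(\g)}(L(\la)))$ is obtained by partitioning the boxes of the Young diagram of $\mathbf p$ according to the $\Z$-cosets of their entries in $T_\la$ and running Robinson--Schensted row insertion (in the appropriate reading order) on each sub-tableau, then stacking the resulting partitions column by column. One always has $\mathbf q\leq\mathbf p$ in the dominance order. It remains to check the purely combinatorial equivalence that $\mathbf q=\mathbf p$ if and only if $A$ has a column strict representative. The ``if'' direction is clear, since a column strict filling restricts to a column strict filling on the sub-diagram for each $\Z$-coset, whose RSK output reproduces the shape of that sub-diagram; the columns then stack back to $\mathbf p$. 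For ``only if'', any failure of column strictness within some $\Z$-coset after row rearrangement forces a collision in row insertion, strictly shrinking the shape in the dominance order.

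The main obstacle is to pin down the precise form of the Joseph combinatorial recipe in the non-integral case and to verify that our choice of $\b_0$, together with the non-standard positive system $\Phi^+=\Phi_0^+\sqcup\Phi_+$ fixed in $\S$\ref{sA}, is compatible with the conventions in the literature on primitive ideals of $\gl_N(\C)$. Once that alignment is made, the corollary follows immediately by combining Theorem~\ref{fdclass} with the tableau formula and the combinatorial equivalence sketched above; the verification itself is essentially already implicit in the combinatorics of \cite[$\S$7]{BK}.
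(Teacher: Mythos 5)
Your overall strategy is the same as the paper's: combine Theorem~\ref{fdclass} with Joseph's tableau description of $\VA(\ann_{U(\g)}(L(\la)))$ and then check a purely combinatorial equivalence. The gap is in the combinatorial input itself, which you flag as ``the main obstacle'' and then misstate. Running Robinson--Schensted separately on each $\Z$-coset and ``stacking the resulting partitions column by column'' does \emph{not} compute the Jordan type of $\VA(\ann_{U(\g)}(L(\la)))$. The correct combination rule is the componentwise \emph{sum} of the coset shapes (induction of nilpotent orbits from the Levi $\prod_i\gl_{n_i}$ attached to the integral root subsystem), which is exactly what the paper encodes by performing a single row insertion of $a_1,\dots,a_N$ with respect to the partial order $a\leq b\iff b-a\in\Z_{\geq 0}$, so that incomparable entries never bump one another. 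With your rule the asserted equivalence with column strictness is false: take $\mathbf p=(2,2)$ and the column strict tableau with rows $(1,\tfrac{1}{2})$ and $(0,-\tfrac{1}{2})$. Here $L(\La,e)$ is finite dimensional by Theorem~\ref{fdclass}, and the partial-order insertion of $1,\tfrac{1}{2},0,-\tfrac{1}{2}$ does give shape $(2,2)=\mathbf p$; but your recipe produces two coset shapes $(1,1)$ whose column-stack is $(1,1,1,1)\neq\mathbf p$. The case where all $(\la|\al^\vee)\notin\Z$ is even starker: then $L(\la)=M(\la)$, whose annihilator has associated variety the whole nilpotent cone (Jordan type $(N)$), whereas column-stacking the $N$ singleton shapes gives $(1^N)$.

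Two further points. The inequality you assert is backwards: since $\VA(\ann_{U(\g)}(Q_\lag\otimes_{U(\g,e)}L))\supseteq\overline{G\cdot e}$ always, the shape satisfies $\mathbf q\geq\mathbf p$ in the dominance order, with equality exactly in the finite dimensional case. And your ``if'' direction restricts a column strict filling to ``the sub-diagram for each $\Z$-coset'', but the boxes carrying entries from a fixed coset need not form a Young diagram, so that restriction argument does not parse; even if it did, the shapes would then be recombined by the wrong rule. Once the recipe is corrected to the paper's single partial-order insertion, what remains is genuinely the elementary exercise the paper invokes: for $\la$ normalized so that comparable entries increase along rows, $A(\la)$ has shape $\mathbf p$ if and only if $A$ is row equivalent to a column strict tableau.
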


\begin{proof}
To deduce this from Theorem~\ref{fdclass}, we need to recall
some classical results describing the associated varieties of 
primitive ideals in $U(\g)$ in terms the
Robinson-Schensted correspondence.
Let $\la = \sum_{i=1}^N a_i \eps_i \in \t^*$.
We define a tableau $A(\la)$ by
starting from the empty tableau 
and then using the Robinson-Schensted row insertion algorithm
to successively incorporate
the complex numbers $a_1,\dots,a_N$.
At the $i$th step we add $a_i$ to the bottom row of the 
tableau unless there is an entry $b$ already in the bottom row with
$a_i < b$, in which case we pick the smallest such $b$,
replace $b$ by $a_i$ then bump $b$ into the next row up by
the analogous procedure.
See \cite[$\S$1.1]{fulton} for a detailed account.
By \cite[Corollary 3.3]{Jo2} (together with \cite[Lemma 2.4]{Jo2} 
to reduce to regular weights) it is known that
$\VA(\ann_{U(\g)}(L(\la)))$ is equal to the closure of the
orbit consisting of all nilpotent matrices of Jordan type equal
to the shape of the tableau $A(\la)$.

Now to prove the corollary we take $\La \in \mathcal L$
and pick a representative $\la \in \La$ such that
$(\la|\alpha^\vee) \notin \Z_{> 0}$ for all $\alpha \in \Phi_0^+$.
Let $A$ be the corresponding $\mathbf p$-tableau.
Thus if $a < b$ are entries in the same row of $A$ then $a$ is 
located to the left of $b$.
It is now an elementary combinatorial
exercise to check that 
$A$ is row equivalent to a column strict tableau
if and only if $A(\la)$ is of shape $\mathbf p$.
Combined with Theorem~\ref{fdclass} and the preceding paragraph, 
we deduce that $L(\La,e)$ is finite dimensional
if and only if 
$\VA(\ann_{U(\g)}(L(\la))) = \overline{G \cdot e}$.
\end{proof}

The result just proved 
also holds for an arbitrary good grading; the general case
easily reduces to the standard good grading considered here
using \cite[Theorem 2]{BG}.


\begin{thebibliography}{CPS33}

\bibitem[A]{Arakawa} T.~Arakawa, Representation theory of $W$-algebras,
 {\em Invent.\ Math.} {\bf 169} (2007), 219--320.

\bibitem[Ba]{Back}
E. Backelin,
Representation of the category $\mathcal O$
in Whittaker categories,
{\em Int. Math. Res. Not.} {\bf 4} (1997), 153--172.

\bibitem[BGG]{BGG}
J. Bernstein, I. M. Gelfand and S. I. Gelfand,
A category of $\mathfrak g$-modules,
{\em Func. Anal. Appl.} {\bf 10} (1976), 87--92.

\bibitem[BT]{deBoerTjin} J.~de Boer and T.~Tjin,
Quantization and representation theory of finite $W$-algebras,
{\em Comm.\ Math.\ Phys.} {\bf 158} (1993), 485--516.

\bibitem[B1]{B3} J.~Brundan,
{Centers of degenerate cyclotomic Hecke algebras and parabolic
category $\OO$}, preprint, {\em arXiv:math/0607717}.

\bibitem[B2]{B2} \bysame,
{Symmetric functions, parabolic category $\OO$ and the Springer
fiber}, to appear,
{\em arXiv:math/0608235v2}.

\bibitem[BG]{BG}
J.~Brundan and S.~M.~Goodwin, Good grading polytopes, {\em Proc.\
London Math.\ Soc.} {\bf 94} (2007), 155--180.

\bibitem[BK1]{BKAdv}
J.~Brundan and A.~Kleshchev, Shifted Yangians and finite
$W$-algebras, {\em Adv.\ Math.} {\bf  200} (2006), 136--195.

\bibitem[BK2]{BK}
\bysame,
{Representations of shifted Yangians and finite $W$-algebras},
to appear,
{\em arXiv:math/0508003v3}.

\bibitem[BK3]{BKSchur} \bysame,
Schur-Weyl duality for higher levels, preprint, {\em arXiv:math/0605217}.

\bibitem[C]{C}
R.~Carter, {\em Finite Groups of Lie Type}, Wiley, N.Y., 1985.

\bibitem[D]{Dix}
J. Dixmier,
{\em Enveloping algebras},
Graduate Studies in Math. 11, Amer. Math. Soc., 1996.

\bibitem[Du]{Duflo}
M. Duflo,
Sur la classification des id\'eaux primitifs dans l'alg\`ebre
enveloppante d'une alg\`ebre de Lie semi-simple,
{\em Ann. Math.} {\bf 105} (1977), 107--120.

\bibitem[GG]{GG}
W.~L.~Gan and V.~Ginzburg, Quantization of Slodowy slices,
{\em Internat. Math. Res. Notices} {\bf 5} (2002), 243--255.

\bibitem[DK]{DK}
A.~De Sole and V.~Kac,  Finite vs affine $W$-algebras, {\em Jpn.\
J.\ Math.} {\bf 1} (2006), 137--261.

\bibitem[D${}^3$HK]{DDDHK}
A.~D'Andrea, C.~De Concini, A. De Sole, R. Heluani and V. Kac, 
Three equivalent definitions of finite $W$-algebras, {appendix} to
\cite{DK}.

\bibitem[EK]{EK}
A.~Elashvili and V.~Kac,  Classification of good gradings of
simple Lie algebras, in {\em Lie groups and invariant theory}\, (E.~B.~Vinberg
ed.), pp. 85--104, Amer.\ Math.\ Soc.\ Transl.\ {\bf 213}, AMS,
2005.

\bibitem[FP]{FP}
E. Friedlander and B. Parshall, Modular representation theory of Lie algebras, {\em Amer. J. Math.} {\bf 110} (1988), 1055--1094.

\bibitem[F]{fulton}
W. Fulton,
{\em Young tableaux},
LMS, 1997.



\bibitem[J]{Ja2}
J. C. Jantzen,
{\em 
Nilpotent orbits in representation theory},
Progress in Math., vol. 228, Birkh\"auser, 2004.

\bibitem[J1]{Jo2}
A. Joseph,
Towards the Jantzen conjecture III,
{\em Compositio Math.} {\bf 41} (1981), 23--30.

\bibitem[J2]{Joseph}
\bysame,
On the associated variety of a primitive ideal,
{\em J. Algebra} {\bf 93} (1985), 509--523. 

\bibitem[K]{Kostant}
B.~Kostant, On Whittaker modules and representation theory,
{\em Invent.\ Math.} {\bf 48} (1978), 101--184.

\bibitem[L]{Losev}
I.~Losev, Quantized symplectic actions and $W$-algebras, 
preprint, {\em arXiv:0707.3108}.

\bibitem[Ly]{Lynch}
T.~E.~Lynch, {\em Generalized Whittaker vectors and representation
theory}, PhD thesis, M.I.T., 1979.

\bibitem[MS]{MS}
D. Mili\v c\'ic and W. Soergel,
 The composition series of modules induced from Whittaker modules,
{\em Comment. Math. Helv.} {\bf 72} (1997), 503--520. 

\bibitem[P1]{P1}
A.~Premet, Special transverse slices and their enveloping
algebras, {\em Adv. in Math.} {\bf 170} (2002), 1--55.

\bibitem[P2]{P2} \bysame,
Enveloping algebras of Slodowy slices and the Joseph ideal, 
{\em J.\ Eur.\ Math.\ Soc.} {\bf 9} (2007), 487--543.

\bibitem[P3]{P3} \bysame,
Primitive ideals, non-restricted representations and finite
$W$-algebras, 
{\em Mosc.\ Math.\ J.} {\bf 7} (2007), 743--762.

\bibitem[S]{Skryabin} S.~Skryabin, A category equivalence, 
{appendix} to \cite{P1}.

\bibitem[VD]{deVosvanDriel} K.~de Vos and P.~van Driel,
The Kazhdan--Lusztig conjecture for finite $W$-algebras, {\em Lett.\
Math.\ Phys.} {\bf 35} (1995), 333--344.

\end{thebibliography}
\end{document}